%% 
%% Copyright 2007-2020 Elsevier Ltd
%% 
%% This file is part of the 'Elsarticle Bundle'.
%% ---------------------------------------------
%% 
%% It may be distributed under the conditions of the LaTeX Project Public
%% License, either version 1.2 of this license or (at your option) any
%% later version.  The latest version of this license is in
%%    http://www.latex-project.org/lppl.txt
%% and version 1.2 or later is part of all distributions of LaTeX
%% version 1999/12/01 or later.
%% 
%% The list of all files belonging to the 'Elsarticle Bundle' is
%% given in the file `manifest.txt'.
%% 

%% Template article for Elsevier's document class `elsarticle'
%% with numbered style bibliographic references
%% SP 2008/03/01
%%
%% 
%%
%% $Id: elsarticle-template-num.tex 190 2020-11-23 11:12:32Z rishi $
%%
%%
\documentclass[reqno,12pt]{article}
\usepackage{amssymb}
\usepackage[utf8]{inputenc}
\usepackage{amsmath}
\usepackage{amsfonts}
\usepackage{pdfpages}
\usepackage{graphicx}
\usepackage[a4paper, total={6in, 10in}]{geometry}
\usepackage{amssymb}
\usepackage{amsmath}
\usepackage{xfrac}
\usepackage{authblk}
\usepackage{setspace}
\usepackage{array}
\usepackage{xcolor}
\usepackage{amsthm}
\newtheorem{theorem}{Theorem}[section]
\newtheorem{definition}{Definition}
\newtheorem{lemma}[theorem]{Lemma}
\newtheorem{corollary}{Corollary}[theorem]
\newtheorem{assumption}{Assumption}
\newtheorem{remark}{Remark}

\usepackage{hyperref}
\hypersetup{
     colorlinks   = true,
     linkcolor    = blue,
     citecolor    = blue
}
\usepackage[english]{babel}
\usepackage{subcaption}
\usepackage[ruled,vlined]{algorithm2e}
\usepackage{subcaption}
\usepackage{bigints}

\DeclareGraphicsRule{.tif}{png}{.pdf}{`convert #1 `dirname #1`/`basename #1 .tif`.pdf}

% comments
\usepackage{soul}

\usepackage{bigints}

\title{Sharp error estimates for target measure diffusion maps with applications to the committor problem}
\author[1]{Shashank Sule\thanks{ssule25@umd.edu}}
\author[2]{Luke Evans\thanks{levans@flatironinstitute.org}}
\author[1]{Maria Cameron\thanks{mariakc@umd.edu}}
\affil[1]{\small{Department of Mathematics, University of Maryland, College Park, MD 20742, USA}}
\affil[2]{\small{Flatiron Institute, 162 Fifth Avenue
New York, NY 10010}}
\begin{document}
\maketitle
\begin{abstract}
    We obtain asymptotically sharp error estimates for the consistency error of the Target Measure Diffusion map (TMDmap) (Banisch et al. 2020), a variant of diffusion maps featuring importance sampling and hence allowing input data drawn from an arbitrary density. The derived error estimates include the bias error and the variance error. The resulting convergence rates are consistent with the approximation theory of graph Laplacians. The key novelty of our results lies in the explicit quantification of all the prefactors on leading-order terms. We also prove an error estimate for solutions of Dirichlet BVPs obtained using TMDmap, showing that the solution error is controlled by consistency error. We use these results to study an important application of TMDmap in the analysis of rare events in systems governed by overdamped Langevin dynamics using the framework of transition path theory (TPT). The cornerstone ingredient of TPT is the solution of the committor problem, a boundary value problem for the backward Kolmogorov PDE. Remarkably, we find that the TMDmap algorithm is particularly suited as a meshless solver to the committor problem due to the cancellation of several error terms in the prefactor formula. Furthermore, significant improvements in bias and variance errors occur when using a quasi-uniform sampling density. Our numerical experiments show that these improvements in accuracy are realizable in practice when using $\delta$-nets as spatially uniform inputs to the TMDmap algorithm.
\end{abstract}

{\bf Keywords.}
Target measure diffusion map, bias error, variance error, committor, $\delta$-net, overdamped Langevin dynamics, manifold learning

\section{Introduction}
\label{sec: Intro}
The diffusion map algorithm (Dmap) \cite{coifman2006diffusion} is a popular tool for the analysis of high-dimensional data arising from stochastic dynamical systems. For example, applications of Dmap have guided important discoveries in chemical physics \cite{nadler2006diffusion}, structural biology \cite{kim2015systematic}, and ocean mixing \cite{banisch2017understanding}. Dmap is specifically amenable to such applications because it has a means for modulating the sampling distribution of the data. In particular, it is assumed that the experimental data $\mathcal{X} \subseteq \mathbb{R}^{m}$ is modeled by a stochastic process $X_t$ following the overdamped Langevin dynamics on a compact $d$-dimensional Riemannian manifold $\mathcal{M}$ written formally \cite{li2023riemannian} as 
\begin{align}
    dX_t = -\nabla_{\mathcal{M}}V(X_t)\,dt + \sqrt{2\beta^{-1}} dW_t. \label{eq: old}
\end{align}
Here $V: \mathcal{M} \to \mathbb{R}$ is a potential function, $\nabla_{\mathcal{M}}$ is the manifold gradient operator, and $W_t$ is a manifold Brownian motion. It is well-established \cite{hsu2002stochastic} that the dynamics $X_t$ are generated by the second-order elliptic operator known as the \emph{backward Kolmogorov operator} 
\begin{align}
    \mathcal{L}:= \beta^{-1}\Delta_{\mathcal{M}} - \langle \nabla V, \nabla \rangle_{\mathcal{M}}. \label{eq: generator}
\end{align}
Here $\Delta_{\mathcal{M}}$ is the Laplace-Beltrami operator on $\mathcal{M}$. {Throughout the rest of the paper, $\Delta_{\mathcal{M}}$, $\nabla_{\mathcal{M}}$, and $\langle X,Y \rangle_{\mathcal{M}}$ will be denoted by $\Delta$, $\nabla$, and $X \cdot Y$ for brevity.}

The generator $\mathcal{L}$ holds immense information about the SDE \eqref{eq: old}. For instance, the eigenvectors of $\mathcal{L}$ form reaction coordinates for the system \eqref{eq: old} and statistics of \eqref{eq: old} such as mean first passage times, reaction rates, or exit times can be studied through solutions to PDEs involving $\mathcal{L}$. Additionally, the spectral information of $\mathcal{L}$ reflects the implicit timescales or the transitions between metastable states in the cases when $V$ has finitely many attracting basins \cite{bovier2016metastability}. 

The advantage of Dmap over its predecessor Laplacian eigenmap~\cite{belkin2003laplacian} is that Dmap has a means for controlling the effect of sampling density.
With a properly chosen renormalization, Dmap produces a discrete operator
$L^{\textsf{Dmap}}_{\epsilon,1/2}$ approximating $\mathcal{L}$ from a finite set of points sampled through the invariant density of the Langevin dynamics \eqref{eq: old}. Remarkably, a priori knowledge of the manifold $\mathcal{M}$ is not required. In the notation $L^{\textsf{Dmap}}_{\epsilon,1/2}$, $\epsilon$ is the bandwidth parameter and 1/2 is the value of the renormalization parameter $\alpha$. In particular, as the number of points $n$ tends to infinity and the kernel bandwidth $\epsilon$ tends to zero, $L^{\textsf{Dmap}}_{\epsilon, 1/2} \to \mathcal{L}$ in a pointwise sense. Thus $L^{\textsf{Dmap}}_{\epsilon, 1/2}$ is a consistent estimator of $\mathcal{L}$; in fact it has also been established that the spectral information of $L^{\textsf{Dmap}}_{\epsilon, 1/2}$ is a consistent estimator of the spectral information of $\mathcal{L}$~\cite{belkin2006convergence,cheng2022eigen,wormell2021spectral}. 

These approximation properties of Dmap and its variants tailored to more general Ito diffusions have motivated numerous applications to unsupervised learning problems especially in natural sciences. 
However, the construction of $L^{\textsf{Dmap}}_{\epsilon, 1/2}$ has an important limitation. The discrete operator $L^{\textsf{Dmap}}_{\epsilon, 1/2}$ approximates $\mathcal{L}$ correctly \emph{only if the input data is sampled i.i.d.\footnotemark[1] from the invariant Gibbs density} $\mu \propto \exp\left(-\beta V\right)$. \footnotetext[1]{The abbreviation i.i.d. stands for \emph{independent, identically distributed}.}
This is quite a restrictive requirement. For instance, Refs. \cite{lai2018point,banisch2020diffusion} point out that sampling from $\mu$ can be undesirable or unfeasible due to the metastability of \eqref{eq: old} or the slow mixing times of simulating the SDE. To fix this problem, Banisch et al.~\cite{banisch2020diffusion} introduced the \emph{Target Measure Diffusion Map} (TMDmap), the subject of this paper. 

\subsection{Rare events, the committor problem, and the TMDmap}

In TMDmap, we seek to approximate the operator $\mathcal{L}$ generating the dynamics \eqref{eq: old} with a known \emph{target} density $\mu$ proportional to $\exp\left(-\beta V(x)\right)$. Remarkably, the input data to TMDmap can come from \emph{any} sampling density $\rho$. As long as $\mu$ is absolutely continuous with respect to $\rho$, i.e the support of $\mu$ is contained within the support of $\rho$,  a Gaussian kernel of bandwidth $\epsilon$
\begin{align}
    k_{\epsilon}(x,y) = \exp\left(-\frac{\|x-y\|^2}{\epsilon}\right) \label{eq: kernel}
\end{align}
can be appropriately renormalized to produce a random walk on the data generated by $L^{\sf TMD}_{\epsilon,\mu}:= L^{(n)}_{\epsilon,\mu}$. Banisch et al. \cite{banisch2020diffusion} have shown that the Monte-Carlo limit of the generator matrix
$L^{(n)}_{\epsilon,\mu}$ as $n \to \infty$ is an $O(\epsilon)$ perturbation of the action of $\mathcal{L}$ on a suitable test function $f$:
\begin{align}
    L^{(n)}_{\epsilon,\mu}f(x) \to \mathcal{L}f(x) + O(\epsilon).
    \label{eq: banish result}
\end{align}
Hence, the estimator $L^{\sf TMD}_{\epsilon,\mu}$ has the same approximation property as $L^{\sf Dmap}_{\epsilon,\alpha}$ but can be computed from an arbitrary sampling density supported on $\mathcal{M}$. This augments diffusion maps with importance sampling techniques, particularly with enhanced sampling and data post-processing strategies from molecular dynamics. These enhanced sampling techniques allow the exploration of $\mathcal{M}$ beyond the minima of $V$ where numerical simulations of \eqref{eq: old} tend to cluster due to metastability. 

 For a finite dataset on $\mathcal{M}$ sampled from an arbitrary sampling density $\rho$, the TMDmap generator $L^{\sf TMD}_{\epsilon,\mu}$ gives a discretization of the operator $\mathcal{L}$ to this dataset. This meshless discretization allows us to approximate boundary-value problems (BVPs) involving the generator $\mathcal{L}$.
One such BVP is the \emph{committor problem}, the pivotal problem in 
\emph{transition path theory} (TPT) \cite{vanden2006towards}. TPT is a framework for the study of metastable systems where the process $X_t$ spends most of its time near attractors of the drift field and transitions rarely between them. The committor function $q$ is defined as follows: given two disjoint regions $A$ and $B$ in $\mathcal{M}$, $q(x)$ is the probability that the process \eqref{eq: old} started at $x$ visits $B$ before $A$. The committor $q$ satisfies the following Dirichlet BVP known as the \emph{committor problem} \cite{vanden2006towards}: 
\begin{align}
    \mathcal{L}q(x) = 0, \, x \in \mathcal{M} \setminus (A \cup B), \quad q(\partial A) = 0, \quad q(\partial B) = 1. \label{eq: committor bvp}
\end{align}
If we have data $\mathcal{X}(n) = \{x_i\}_{i=1}^{n}$ sampled through \eqref{eq: old} (for instance, through an Euler-Maruyama scheme) then we can use it as input to the Dmap algorithm and obtain a discretization to problem \eqref{eq: committor bvp}: 
\begin{align}
    L^{\sf Dmap}_{\epsilon,1/2}q^{\sf Dmap}(x_i) = 0, \, x_{i} \in \mathcal{M} \setminus (A \cup B), \quad q^{\sf Dmap}(x_i \in \partial A) = 0, \quad q^{\sf Dmap}(x_i \in \partial B) = 1. \label{eq: dmap committor bvp}
\end{align}
Sufficiently sampled data $\mathcal{X}(n)$ from \eqref{eq: old} will resemble samples of the invariant density $\mu$. But in metastable systems, $\mu$ undersamples the transition regions between $A$ and $B$, leading to poor accuracy of $q^{\sf Dmap}$ in precisely the regions where the committor probabilities are needed. Consequently, it is advantageous to discretize \eqref{eq: committor bvp} through TMDmap to a point cloud generated by an enhanced sampling algorithm, e.g. metadynamics \cite{metadynamics_2002}: 
\begin{align}
    L^{(n)}_{\epsilon,\mu}q^{\sf TMD}(x_i) = 0, \, x_{i} \in \mathcal{M} \setminus (A \cup B), \quad q^{\sf TMD}(x_i \in \partial A) = 0, \quad q^{\sf TMD}(x_i \in \partial B) = 1 \label{eq: tmdmap committor bvp}.
\end{align}

\subsection{The goal of this work and summary of main results}

The committor problem was solved numerically using a variant of TMDmap with Mahalanobis kernel in \cite{evans2022computing}. It was observed that subsampling data to make it spatially quasi-uniform using \emph{$\delta$-nets}~\cite{crosskey2017atlas} improved the accuracy of the TMDmap-based committor $q^{\sf TMD}$~\cite{evans2022computing}. Here, $\delta$-nets are defined as maximal subsets of the point cloud $\mathcal{X}(n)$ where any two points are distance at least $\delta$ apart. The utility of such uniform subsampling raises the question of \emph{what an optimal sampling density should be}. Motivated by this question, the goal of this work is to quantify the error of the TMDmap algorithm in terms of its parameters. The specific objectives are the following.
\begin{itemize}
\item The first objective is to derive an error formula for the discrete generator as a
function of the sampling density $\rho$, the bandwidth parameter $\epsilon$, the number of sample points $n$, and the target measure $\mu$. The error has two
components, the \emph{bias error} that decays as the bandwidth $\epsilon$ tends to zero, and the \emph{variance
error} that decays as the number $n$ of samples tends to infinity but blows up as $\epsilon\rightarrow 0$ unless $n\rightarrow \infty$ fast enough.
\item The second objective is to propose a way to reduce the error of TMDmap via simple post-processing of the input data and test it on benchmark problems.
\end{itemize}
Our results are summarized as follows: 
\begin{enumerate}
    \item { We derive a sharp error bound and establish a relationship between $n$ and $\epsilon$ necessary for the convergence the TMDmap generator.}

   \begin{theorem} [The total error bound] 
   \label{thm:main}
   Let $\mathcal{M}$ be a compact $d$-dimensional manifold without boundary.  Let $\mathcal{X}(n)\subset\mathcal{M}$ be a point cloud sampled i.i.d. with density $\rho$, $0<\rho_{\min}\le \rho(x)\le\rho_{\max}<\infty$. Let $x \in \mathcal{M}$ be an arbitrary point. Let $f \in C^{2}(\mathcal{M})$ be an arbitrary function. Furthermore, let $\epsilon$ be the kernel bandwidth and $\mu$ be the target density used for constructing the TMDmap generator $L^{(n)}_{\epsilon,\mu}$. Then as $n \to \infty$ and $\epsilon \to 0$ so that\footnotemark[2]
   \footnotetext[2]{The notation $\log$ with no subscript is used for the natural logarithm throughout this paper.}
   \begin{equation}
    \label{eq:thmM4_en}
    \lim_{\substack{n\rightarrow\infty\\\epsilon\rightarrow 0}}\frac{n\epsilon^{2+d/2}}{\log n} = \infty,
\end{equation}
 with probability greater than $1 - 2n^{-3}$, we have:    
\begin{align}
    |4\beta^{-1}L^{(n)}_{\epsilon,\mu}f(x) - \mathcal{L}f(x)| & \le \underbrace{\frac{\alpha\epsilon}{\rho^{1/2}(x_i)}\left(2\|\nabla f(x)\|\epsilon^{1/2} + 11|f(x)|\right)}_{\text{variance error}} \label{eq: variance error term}\\
    &+ \underbrace{\epsilon \left|\mathcal{B}_{1}[f,\mu] + \mathcal{B}_{2}[f,\mu, \rho] + \mathcal{B}_{3}[f,\mu, \rho] \right| + O(\epsilon^2)}_{\text{bias error}}. \label{eq: bias error term}
\end{align}

The expressions for $\alpha, \mathcal{B}_1, \mathcal{B}_{2}$, and $\mathcal{B}_3$ are given by:
\begin{align}
    \alpha &= \frac{1}{(2\pi)^{d/4}}\sqrt{\frac{\log n}{n \epsilon^{4+d/2}}}, \\
    \mathcal{B}_1[f,\mu] &:= \frac{1}{4}\left[\mathcal{Q}\left(f\mu^{1/2}\right)- f \mathcal{Q}(\mu^{1/2}) \right] + \frac{1}{16}\left(2\nabla f \cdot \nabla \left(\mu^{1/2}\omega \right) + (\mu^{1/2}\omega)\Delta f \right),  \label{eq: bias error term 1} \\
    \mathcal{B}_{2}[f,\mu,\rho] &:= - \frac{1}{16}\left(2\nabla f \cdot \nabla \left(\mu^{1/2}\frac{\Delta \rho}{\rho} \right) + \left(\mu^{1/2}\frac{\Delta \rho}{\rho}\right)f \right), \label{eq: bias error term 2} \\
    \mathcal{B}_{3}[f,\mu,\rho] &:= \frac{1}{16}\left[\frac{\Delta (\mu^{1/2})}{\mu^{1/2}} - \left(\frac{\Delta \rho}{\rho} - \omega \right)\right]\left[f\frac{\Delta (\mu^{1/2})}{\mu^{1/2}} - \frac{\Delta (\mu^{1/2}f)}{\mu^{1/2}}\right].  \label{eq: bias error term 3}
\end{align}
Here, $\mathcal{Q}$ is a non-linear differential operator and $\omega$ is a smooth function on $\mathcal{M}$. 
\end{theorem}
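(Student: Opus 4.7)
The plan is to split the total error into a bias component and a variance component via the triangle inequality, using the Monte-Carlo limit of the TMDmap operator as the intermediary. Write $L^{(n)}_{\epsilon,\mu}f(x) = \mathcal{N}_n(x)/\mathcal{D}_n(x)$, where $\mathcal{N}_n$ and $\mathcal{D}_n$ are the appropriately renormalized numerator and denominator sums that define the TMDmap generator, and let $\mathcal{N}_\infty(x) = \mathbb{E}[\mathcal{N}_n(x)]$ and $\mathcal{D}_\infty(x) = \mathbb{E}[\mathcal{D}_n(x)]$ be their expectations under the sampling distribution $\rho$. Then $|4\beta^{-1}L^{(n)}_{\epsilon,\mu}f(x) - \mathcal{L}f(x)|$ is bounded by $|4\beta^{-1}(\mathcal{N}_n/\mathcal{D}_n)(x) - 4\beta^{-1}(\mathcal{N}_\infty/\mathcal{D}_\infty)(x)|$ (the variance error, which measures the sampling fluctuation at fixed $\epsilon$) plus $|4\beta^{-1}(\mathcal{N}_\infty/\mathcal{D}_\infty)(x) - \mathcal{L}f(x)|$ (the bias error, which measures how well the kernelized operator approximates $\mathcal{L}$ in the $n\to\infty$ limit).

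For the bias term I would invoke the standard asymptotic expansion of Gaussian convolution integrals on compact manifolds, namely that for smooth $g$ one has $\int_{\mathcal{M}} k_\epsilon(x,y) g(y)\,dV(y) = (\pi\epsilon)^{d/2}\bigl[g(x) + \epsilon\,m_2[g](x) + O(\epsilon^2)\bigr]$ for an explicit second-order operator $m_2$ whose coefficients involve the curvature of $\mathcal{M}$ through the function $\omega$. Because the TMDmap denominator is built from a kernel density estimator $\hat\rho_\epsilon$ of $\rho$, I would first expand $\hat\rho_\epsilon = \rho + \epsilon\,[(\Delta\rho)/4 + (\omega\rho)/4] + O(\epsilon^2)$, so that the factor $\mu^{1/2}/\hat\rho_\epsilon$ inside the TMDmap kernel contributes both a $\Delta\rho/\rho$ term and an $\omega$ term at $O(\epsilon)$. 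Substituting these expansions into both numerator and denominator and taking the ratio, the $O(1)$ piece recovers $\mathcal{L}f$, and the $O(\epsilon)$ piece organizes into the three contributions $\mathcal{B}_1[f,\mu]$ (the $\mu$-only piece arising from commutators of second-order operators with $\mu^{1/2}$), $\mathcal{B}_2[f,\mu,\rho]$ (the correction coming from the $\Delta\rho/\rho$ expansion of the density estimate, which vanishes when $\rho$ is constant), and $\mathcal{B}_3[f,\mu,\rho]$ (the cross term between the density correction and the $\mu$-normalization).

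For the variance term I would apply Bernstein's inequality separately to $\mathcal{N}_n(x)$ and $\mathcal{D}_n(x)$: the summands are bounded random variables depending on the i.i.d. points, and the per-sample second moments scale as $\epsilon^{-d/2}$. Choosing the confidence level so that the deviation tail is bounded by $n^{-3}$ produces the $\sqrt{\log n / (n\epsilon^{4+d/2})}$ prefactor after accounting for the $\epsilon^{-1}$ normalization of the generator, which is exactly $\alpha$. A union bound gives the $1-2n^{-3}$ probability. Linearizing the perturbation of the ratio $\mathcal{N}_n/\mathcal{D}_n$ around $\mathcal{N}_\infty/\mathcal{D}_\infty$ and using that the numerator fluctuation is proportional to $\|\nabla f\|\epsilon^{1/2}$ (after the constant-in-$f$ piece cancels between the ratio's two terms) while the denominator fluctuation multiplies $|f|$, one recovers the two terms $2\|\nabla f(x)\|\epsilon^{1/2}$ and $11|f(x)|$ in \eqref{eq: variance error term}. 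The factor $\rho^{-1/2}(x)$ arises from the Bernstein variance having a $1/\rho(x)$ scaling while the generator normalization supplies a $\rho^{1/2}(x)$ factor.

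The hard part will be the bookkeeping of the bias expansion: after expanding $\hat\rho_\epsilon$, the $\mu$-weighting, and the manifold integrals to second order, one has to algebraically regroup a large number of terms involving $\Delta f$, $\nabla f\cdot\nabla\mu^{1/2}$, $\nabla f\cdot\nabla\rho$, $\Delta\mu^{1/2}$, $\omega$, and curvature-induced cross terms, and recognize them as the nonlinear operator $\mathcal{Q}$ together with the three pieces $\mathcal{B}_1,\mathcal{B}_2,\mathcal{B}_3$ written in the precise form of \eqref{eq: bias error term 1}--\eqref{eq: bias error term 3}. Isolating the structure that makes $\mathcal{B}_2$ depend only on $\rho$ and $\mathcal{B}_3$ factor as a product is the step most likely to require a delicate identity, whereas the kernel asymptotics and the Bernstein concentration step are by now routine in the diffusion-maps literature.
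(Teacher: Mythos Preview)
Your bias-error plan is essentially the paper's: a second-order kernel expansion on $\mathcal{M}$ (the paper's Lemma~3.1), followed by careful bookkeeping through the $\rho_\epsilon^{-1}$ and $\mu^{1/2}$ renormalizations to isolate $\mathcal{B}_1,\mathcal{B}_2,\mathcal{B}_3$. That part is fine.

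The variance argument has a real gap. You propose to write $L^{(n)}_{\epsilon,\mu}f(x)=\mathcal{N}_n/\mathcal{D}_n$ and apply Bernstein to $\mathcal{N}_n$ and $\mathcal{D}_n$ separately, treating the summands as bounded functions of the i.i.d.\ points. But the TMDmap summands contain the \emph{discrete} kernel density estimate $\rho_\epsilon^{(n)}(x_j)=n^{-1}\sum_l k_\epsilon(x_j,x_l)$ in the denominator, so each summand depends on the entire point cloud and the terms are not independent. Bernstein does not apply to $\mathcal{N}_n$ as written. The paper handles this by inserting a second intermediary: the discrete operator $L_{\epsilon,\mu}$ built with the \emph{continuous} KDE $\rho_\epsilon$. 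The variance error is then split as
\[
L^{(n)}_{\epsilon,\mu}f-\mathcal{L}_{\epsilon,\mu}f=\bigl(L^{(n)}_{\epsilon,\mu}f-L_{\epsilon,\mu}f\bigr)+\bigl(L_{\epsilon,\mu}f-\mathcal{L}_{\epsilon,\mu}f\bigr).
\]
The first piece is controlled by a uniform high-probability bound on $|\rho_\epsilon^{(n)}-\rho_\epsilon|$ over all data points (Theorems~3.2--3.3), and this is what produces the $11|f(x)|$ term. Only the second piece is a genuine i.i.d.\ average to which Bernstein applies, and it yields the $2\|\nabla f\|\epsilon^{1/2}$ term. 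So your attribution of the two constants to ``numerator fluctuation'' versus ``denominator fluctuation'' is not how they actually arise.

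A secondary point: even for the i.i.d.\ piece, the paper does \emph{not} bound numerator and denominator separately and then linearize. It introduces coupled random variables $Y_j=F_jm_G-(m_F+a\epsilon m_G)G_j+a\epsilon m_G^2$ (and the mirror $Z_j$) so that the event $\{L_{\epsilon,\mu}f-\mathcal{L}_{\epsilon,\mu}f\ge a\}$ becomes exactly $\{\sum Y_j\ge\text{const}\}$. The key computation is that ${\sf Var}(Y)=\tfrac{\epsilon}{4}(2\pi\epsilon)^{-d/2}\bigl(\tfrac{\mu^2}{\rho}\|\nabla f\|^2+O(\epsilon)\bigr)$, with the $\|\nabla f\|^2$ emerging from a cancellation between $\mathbb{E}[F^2]$, $\mathbb{E}[G^2]$, and $\mathbb{E}[FG]$. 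Bounding numerator and denominator separately would miss this cancellation and would not recover the sharp $\|\nabla f\|$ prefactor.
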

The precise forms of $\omega$ and $\mathcal{Q}$ are further detailed in Section \ref{sec: results}. 
{The proof of this theorem is done using techniques similar to those in \cite{hein2007graph, singer2006graph, belkin2006convergence}.}

    Our key contribution is the explicit formulas for the prefactors in the leading-order terms of the bias and variance errors. To our knowledge, this is the first consistency result that enumerates the prefactor from the bias error. 
    
    \begin{remark}
    {The assumption that the manifold $\mathcal{M}$ is compact without boundary is not essential. It is adopted to keep the statement of Theorem \ref{thm:main} more concise. Often, this assumption does not hold in applications. A common case is when the manifold $\mathcal{M}$ is not compact, but the data points are located within a bounded region $\Omega\subset\mathcal{M}$. In this case, the error bound in Theorem \ref{thm:main} remains valid for all points $x\in\Omega$ lying far enough from the boundary of $\Omega$, i.e., at a distance greater than $m\sqrt{\epsilon/2}$ from $\partial\Omega$ where $m$ is a positive number such that $\exp\left(-\tfrac{m^2}{2}\right)<\epsilon^2$. }
    \end{remark}

    { \item In Theorem \ref{thm: BVP error estimate} we use the \emph{method of comparisons} \cite{morton_mayers_2005} to prove an error estimate for solutions to Dirichlet BVPs using TMDmap (e.g. \eqref{eq: tmdmap committor bvp}), showing that making the \emph{consistency error} on the left-hand side of \eqref{eq: variance error term}--\eqref{eq: bias error term} smaller reduces the \emph{solution error} between the numerical solution and the true solution. Thus improvements to the bias and variance error transfer over as improvements in the solution error. }
    
    \item The decay of the bias error may be sped up by canceling some of the prefactors. For instance, such cancellations occur when $(i)$ the manifold has zero curvature, $(ii)$ the sampling density is quasi-uniform, or $(iii)$ when the test function satisfies $\mathcal{L}f = 0$. Additionally, when the manifold has constant but non-zero curvature, a prefactor may still be eliminated if $\mathcal{L}f = 0$. Importantly, conditions $(i)$, $(ii)$, and $(iii)$ are satisfied in a large class of problems of interest. Condition $(i)$ occurs naturally when the system exhibits a separation in slow and fast time scales due to a stiff component in the diffusion matrix. Condition $(i)$ also holds in angular systems such as butane or alanine dipeptide where $\mathcal{M}$ is given by Cartesian products of flat tori $\mathbb{R}\setminus \mathbb{Z}$. Condition $(ii)$ can be obtained when $\mathcal{M}$ is sampled through well-tempered metadynamics \cite{barducci2011metadynamics}. Such samples can be made additionally spatially uniform using $\delta$-nets. Most importantly, condition $(iii)$ is precisely the committor equation \eqref{eq: committor bvp}. A consequence of these observations is that the error bound for the committor, $|q^{{\sf TMD}} - q|$, is smaller than the general error bound implied by Theorem \ref{thm: BVP error estimate} (see Corollary \ref{cor: committor error estimate}) implying that \emph{TMDmap is particularly suited to solving the committor problem especially using quasi-uniform sampling densities}. 
    
     \item We show that these speedups in bias error are realizable in practice for a variety of systems including a 1D periodic system embedded in 2D, Mueller's potential in 2D, and a double well potential 2D. Combined with similar results obtained in \cite[Fig. 9]{evans2022computing} for the alanine-dipeptide system, this work adds to the growing body of evidence that improvements in the approximation theory of diffusion maps are of practical importance. 
    
\end{enumerate}

\subsection{Related work}
\subsubsection{Diffusion maps: theory} 
\label{sec:dmap-theory}
The consistency of kernel-based estimators $L$ of backward Kolmogorov operators on manifolds is an active topic in unsupervised learning. The approximation theory of $L$ splits into independent analyses of the bias and variance error. Early work on these errors concerned pointwise consistency, i.e. the pointwise convergence of the estimator $L$ applied to a test function $f$. The seminal works on pointwise consistency include \cite{belkin2003laplacian, von2008consistency, lafon2004diffusion, hein2005graphs, hein2007graph}. Building on this literature, \cite{singer2006graph} improved the bias error rate in \cite{hein2005graphs} from $O(\sqrt{\epsilon})$ to $O(\epsilon)$ and the variance error rate for Laplacian eigenmaps. An important development to this subject was introduced in \cite{belkin2006convergence} where spectral convergence--the convergence of the discrete eigenvectors to the continuous counterparts--was proved for eigenvectors of the normalized graph Laplacians. Further work on spectral convergence includes \cite{singer2006graph, cheng2020convergence, cheng2022eigen, wang2015spectral}. A recent exciting development in the subject has been the integration of optimal transport-based techniques for variance error estimation \cite{garcia2020error, wormell2021spectral}. 

\subsubsection{Diffusion maps: applications.} Coifman and Lafon in \cite{coifman2006diffusion} initiated an expansive program of adapting diffusion maps to data modeled by vector bundles \cite{singer2012vector}, group invariant manifolds \cite{hoyos2023diffusion}, and diffusion processes \cite{nadler2006diffusion} for applications such as coarse-graining \cite{coifman2008diffusion,singer2009detecting}, image segmentation \cite{wassermann2008diffusion}, data fusion \cite{katz2019alternating}, data representation \cite{little2017multiscale, allard2012multi, bremer2006diffusion}, PDEs on manifolds \cite{liang2021solving,antil2021fractional,jiang2023ghost}, and rare event quantification \cite{evans2022computing,williams2015data,evans2021computing}. One line of research has concerned algorithmic improvements to the Dmap algorithm such as the use of self-tuning, variable bandwidth, or $k$-nearest neighbors (kNN) sparsified kernels \cite{cheng2022convergence,berry2016variable,berry2016local}. Another line of work has centered on improving the diffusion map embedding through its stability \cite{long2019landmark,kohli2021ldle} or its statistical properties \cite{li2020variational}. In particular, the setting of Dmap was adapted to isotropic diffusions from invariant densities to anisotropic diffusions from general densities. Key works on this subject include the extension to diffusions obtained from collective variables through Mahalanobis diffusion maps (mmap) \cite{singer2008non}, the subsequent generalization to symmetric positive definite diffusion tensors \cite{evans2021computing}, and Ref.~\cite{banisch2020diffusion} proposing the TMDmap.  
{ In \cite{evans2022computing}, the TMDmap was upgraded with the Mahalanobis kernel for solving the committor problem for time-reversible processes with variable and anisotropic diffusion tensors arising in chemical physics applications.}

In the context of these works, the present paper can be viewed as an extension of the approximation theory of graph Laplacians to TMDmap. In addition, it leverages this approximation theory to gain concrete insights for developing a more robust practical application to rare event quantification.

%%%%%%%%B A C K G R O U N D

\section{Background}
\label{sec: tmdmaps}
The Dmap~\cite{coifman2006diffusion} and TMDmap~\cite{banisch2020diffusion} algorithms are summarized in Table \ref{table:DmapTMDmap}. Let us elaborate on the construction of TMDmap.

Suppose that the data points $\mathcal{X}(n) = \{x_i\}_{i=1}^n\subset \mathcal{M} \subseteq \mathbb{R}^m$ are sampled from a density $\rho$ using any suitable enhanced sampling technique such as temperature acceleration or metadynamics \cite{metadynamics_2002,barducci2011metadynamics}.
The goal of TMDmap is to approximate the action of the infinitesimal generator $\mathcal{L}$ \eqref{eq: generator} for the time-reversible dynamics governed by the SDE \eqref{eq: old} on any smooth function $f: \mathcal{M} \to \mathbb{R}$.
The invariant density $\mu(x)$ for SDE \eqref{eq: old} is known up to a proportionality constant: 
\begin{align}
    \mu(x) \propto \exp\left(-\beta V(x)\right).
\end{align}

The TMDmap steps are the following:
\begin{enumerate}
\item A symmetric kernel matrix $[K^{(n)}_{\epsilon}]_{ij} = k_{\epsilon}(x_i,x_j)$, $1\le i,j\le n$, $x_i,x_j\in\mathcal{X}(n)$ is formed using the Gaussian kernel
\begin{equation}
\label{ker1}
k_{\epsilon}(x,y):=\exp\left(-\frac{\|x-y\|^2}{\epsilon}\right).
\end{equation}
The discrete \emph{kernel density estimate} (KDE) $\rho^{(n)}_{\epsilon}(x_i)$ and the corresponding diagonal matrix $D^{(n)}_{\epsilon}$ are constructed out of rows of $K^{(n)}_{\epsilon}$ as follows: 
\begin{align}
    {[D^{(n)}_{\epsilon}]_{ii} =  
    \rho^{(n)}_{\epsilon}(x_i): = \frac{1}{n}\sum_{j=1}^{n}\left[K^{(n)}_{\epsilon}\right]_{ij}.}
\end{align}

The continuous KDE is given as the law of large numbers (LLN) limit of $\rho^{(n)}_{\epsilon}(x)$
\begin{equation}
\label{qeps}
\rho_{\epsilon} (x): = \int_{\mathcal{M}}k_{\epsilon}(x,y)\rho(y)dy.
 \end{equation}
\item Next, the kernel matrix is normalized by dividing the rows by the KDE and multiplying by $\mu^{1/2}$: 
\begin{equation}
\label{Kmatr}
\left[M_{\mu}^{(n)}\right]_{ii} = \mu^{1/2}(x_i), \quad K^{(n)}_{\epsilon,\mu} = K^{(n)}_{\epsilon}\left(D^{(n)}_{\epsilon}\right)^{-1}\left(M_{\mu}^{(n)}\right).
\end{equation}
Note that for any fixed $x_i\in\mathcal{X}(n)$,
\begin{align}
    \lim_{n\rightarrow\infty}\frac{1}{n}\sum_{j=1}^{n}\left[K^{(n)}_{\epsilon,\mu}\right]_{ij}f(x_j)  =  \mathcal{K}_{\epsilon,\mu}f(x_i) := \int_{\mathcal{M}}k_{\epsilon,\mu}(x_i,y)f(y)\rho(y)\,dy.
\end{align}
Here, $k_{\epsilon,\mu}$ is the renormalized kernel given by 
\begin{align}
    k_{\epsilon,\mu}(x,y):= \frac{k_{\epsilon}(x,y)\mu^{1/2}(y)}{\rho_{\epsilon}(y)}.
\end{align}
{This is the only step in which TMDmap differs from Dmap. In Dmap, the renormalization of the kernel is done by dividing $k_{\epsilon}(x,y)$ by $\rho^{\alpha}_{\epsilon}(y)$.}

\item Next, a Markov matrix $P^{(n)}_{\epsilon,\mu}$ is defined by renormalizing the rows of $K^{(n)}_{\epsilon,\mu}$ to make row sums equal to one: 
\begin{equation}
\label{Pmatr}
D^{(n)}_{\epsilon,\mu} = \textsf{diag}\big(K^{(n)}_{\epsilon,\mu}\mathbf{1}_{n}\big), \quad P^{(n)}_{\epsilon,\mu} = \left(D^{(n)}_{\epsilon,\mu}\right)^{-1} K^{(n)}_{\epsilon,\mu}.
\end{equation}
The LLN limit of $P^{(n)}_{\epsilon,\mu}$ defines its continuous counterpart, the Markov operator
\begin{equation}
\label{Pop}
\mathcal{P}_{\epsilon,\mu}f(x) := \frac{\mathcal{K}_{\epsilon,\mu}f(x)}{\mathcal{K}_{\epsilon,\mu}e(x)},       
\end{equation}
where $e(x) =  1$ for all $x\in\mathcal{M}$.
\item
The generator of the random walk defined by the Markov matrix $P^{(n)}_{\epsilon,\mu}$ is given by
\begin{equation}
\label{Lmatr}
L^{(n)}_{\epsilon,\mu}:=\frac{ P^{(n)}_{\epsilon,\mu} - I}{\epsilon}.
\end{equation}
The LLN limit of $L^{(n)}_{\epsilon,\mu}$ is therefore 
\begin{equation}
\label{Leps}
\mathcal{L}_{\epsilon,\mu}f(x) : = \frac{\mathcal{P}_{\epsilon,\mu}f(x) - f(x)}{\epsilon}.
\end{equation}
\end{enumerate}
For a fixed $x \in \mathcal{M}$, $L^{(n)}_{\epsilon, \mu}f(x) \to \mathcal{L}_{\epsilon, \mu}f(x)$ almost surely as $n\rightarrow\infty$~\cite{banisch2020diffusion}:

\begin{theorem}[Theorem 3.1 from \cite{banisch2020diffusion}]
\label{Lthm}
{Let $\mathcal{L}_{\epsilon,\mu}$ be the generator defined by \eqref{eq: generator} and let $f:\mathcal{M}\rightarrow \mathbb{R}$ be smooth. Then in the limit $n\rightarrow\infty$ and $\epsilon\rightarrow 0$ for every point $x_i\in\mathcal{X}(n)\subset\mathcal{M}$,
\begin{equation}
\label{Lop}
\mathcal{L}_{\epsilon,\mu}f(x)\rightarrow \frac{1}{4} \left(\Delta f +\nabla \log \mu \cdot \nabla f\right) \equiv \frac{\beta}{4}\mathcal{L}.
\end{equation} 
}
\end{theorem}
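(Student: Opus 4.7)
The plan is to derive the limit by performing a small-$\epsilon$ Laplace-type asymptotic expansion of the continuous operator $\mathcal{L}_{\epsilon,\mu}$ and to take $\epsilon\to 0$ at the end; the $n\to\infty$ passage from $L^{(n)}_{\epsilon,\mu}$ to $\mathcal{L}_{\epsilon,\mu}$ is the law of large numbers and I would treat it separately. The workhorse is a kernel expansion lemma: for any $g\in C^2(\mathcal{M})$,
\begin{equation*}
\int_{\mathcal{M}} k_{\epsilon}(x,y)\,g(y)\,dy \;=\; (\pi\epsilon)^{d/2}\Bigl[g(x) + \tfrac{\epsilon}{4}\bigl(\Delta g(x) + \omega(x)g(x)\bigr) + O(\epsilon^2)\Bigr],
\end{equation*}
where $\omega$ is a smooth curvature-dependent function. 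I would derive this by passing to Riemannian normal coordinates at $x$, discarding the exponentially small contribution from outside the injectivity radius, expanding the extrinsic chord distance $\|x-y\|$ in terms of the geodesic distance plus curvature corrections, accounting for the Jacobian of the exponential map, and then computing the relevant Gaussian moment integrals (odd moments vanish and the leading even moment is exactly what produces $\Delta g$).

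Given the lemma, I would apply it three times. First to $g=\rho$ to get $\rho_{\epsilon}(y) = (\pi\epsilon)^{d/2}\rho(y)(1+O(\epsilon))$, so that the weight $\mu^{1/2}(y)\rho(y)/\rho_{\epsilon}(y)$ appearing inside $\mathcal{K}_{\epsilon,\mu}$ becomes $(\pi\epsilon)^{-d/2}\mu^{1/2}(y)(1+O(\epsilon))$. Substituting into \eqref{Pop} and applying the lemma again with $g=\mu^{1/2}f$ and $g=\mu^{1/2}$ respectively gives
\begin{align*}
\mathcal{K}_{\epsilon,\mu}f(x) &= \mu^{1/2}(x)f(x) + \tfrac{\epsilon}{4}\bigl[\Delta(\mu^{1/2}f)(x) + \omega(x)\mu^{1/2}(x)f(x)\bigr] + O(\epsilon^2),\\
\mathcal{K}_{\epsilon,\mu}e(x) &= \mu^{1/2}(x) + \tfrac{\epsilon}{4}\bigl[\Delta\mu^{1/2}(x) + \omega(x)\mu^{1/2}(x)\bigr] + O(\epsilon^2).
\end{align*}
Forming the ratio $\mathcal{P}_{\epsilon,\mu}f=\mathcal{K}_{\epsilon,\mu}f/\mathcal{K}_{\epsilon,\mu}e$ and expanding $(1+\epsilon\cdot\text{stuff})^{-1}$, the curvature term $\omega(x)$ cancels identically between numerator and denominator—this cancellation is precisely why TMDmap recovers the correct generator regardless of the ambient geometry, and it is the key structural feature of the $\mu^{1/2}/\rho_{\epsilon}$ renormalization. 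Subtracting $f(x)$, dividing by $\epsilon$ and letting $\epsilon\to 0$ leaves
\begin{equation*}
\mathcal{L}_{\epsilon,\mu}f(x) \;\longrightarrow\; \frac{1}{4\mu^{1/2}(x)}\Bigl[\Delta\bigl(\mu^{1/2}f\bigr)(x) - f(x)\,\Delta\mu^{1/2}(x)\Bigr].
\end{equation*}
Expanding $\Delta(\mu^{1/2}f) = f\Delta\mu^{1/2} + 2\nabla\mu^{1/2}\cdot\nabla f + \mu^{1/2}\Delta f$ and using $2\nabla\mu^{1/2}/\mu^{1/2} = \nabla\log\mu$ simplifies this to $\tfrac14(\Delta f + \nabla\log\mu\cdot\nabla f)$, and substituting $\nabla\log\mu = -\beta\nabla V$ delivers $\tfrac{\beta}{4}\mathcal{L}f$.

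The main obstacle is the kernel expansion lemma on a manifold: one must carefully justify truncation to a geodesic ball of radius $O(\sqrt{\epsilon\log(1/\epsilon)})$ (so that the tails are $o(\epsilon^2)$ uniformly), correctly expand $\|x-\exp_x(v)\|^2 = \|v\|^2 - \tfrac{1}{12}\|\mathrm{II}(v,v)\|^2 + O(\|v\|^5)$ where $\mathrm{II}$ is the second fundamental form, and track the Jacobian $1 - \tfrac{1}{6}\mathrm{Ric}(v,v) + O(\|v\|^3)$ of the exponential map; together these produce the explicit $\omega$. After this geometric step, the remainder of the argument is purely algebraic: substitution into the ratio, Taylor expansion of $1/(1+\epsilon\cdot\ldots)$, and a product-rule simplification.
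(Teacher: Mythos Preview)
Your approach is essentially the same as the paper's: apply a kernel-expansion lemma to $\rho$, to $\mu^{1/2}f$, and to $\mu^{1/2}$, form the ratio $\mathcal{P}_{\epsilon,\mu}f=\mathcal{K}_{\epsilon,\mu}f/\mathcal{K}_{\epsilon,\mu}e$, expand, and observe that all terms except $\Delta(\mu^{1/2}f)$ and $f\Delta\mu^{1/2}$ cancel. The paper carries this out to second order (Lemma~\ref{lem: expansion} and the computation leading to \eqref{finalop}), but for the leading-order statement your first-order version suffices.

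One imprecision: when you write $\rho_{\epsilon}(y)^{-1}\rho(y)\mu^{1/2}(y)=(\pi\epsilon)^{-d/2}\mu^{1/2}(y)(1+O(\epsilon))$ and then claim $\mathcal{K}_{\epsilon,\mu}f(x)=\mu^{1/2}f+\tfrac{\epsilon}{4}[\Delta(\mu^{1/2}f)+\omega\mu^{1/2}f]+O(\epsilon^2)$, the hidden $O(\epsilon)$ in the weight actually contributes at order $\epsilon$, not $\epsilon^2$, to $\mathcal{K}_{\epsilon,\mu}f$. Concretely, the $\epsilon$-coefficient picks up an additional term proportional to $(\Delta\rho/\rho)\,\mu^{1/2}f$ (compare the paper's \eqref{Km2}). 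This does not damage your conclusion, because the extra term has the form $c(x)\mu^{1/2}(x)f(x)$ with the \emph{same} $c$ appearing in $\mathcal{K}_{\epsilon,\mu}e$, so it cancels in the ratio exactly as $\omega$ does---but you should say so explicitly rather than sweeping it into $O(\epsilon^2)$.
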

We are interested in the exact formula for the bias error and the rate of convergence of the term $L^{(n)}_{\epsilon, \mu}f(x)$ to the differential operator $\tfrac{\beta}{4}\mathcal{L}f(x)$. 
\begin{remark}

    \begin{enumerate}
        \item Here we have presented the TMDmap algorithm as originally proposed in \cite{banisch2020diffusion} with an asymmetric renormalization in step 3. In the diffusion maps literature, it is common to construct $K^{(n)}_{\epsilon,\mu}$ with a \emph{symmetric} renormalization given by 
        \begin{align}
            K^{(n)}_{\epsilon, \mu} = \left(D^{(n)}_{\epsilon}\right)^{-1}M^{1/2}K^{(n)}_{\epsilon,\alpha}\left(D^{(n)}_{\epsilon}\right)^{-1}M^{1/2}\label{eq:symmetric renormalization}.
        \end{align}
        Using either equation, \eqref{Kmatr} or \eqref{eq:symmetric renormalization}, for the construction of $K^{(n)}_{\epsilon,\mu}$ leads to the Markov matrix $P^{(n)}_{\epsilon,\mu}$. However, the difference between these renormalizations is as follows: in the case of the symmetric renormalization, the matrix $D^{(n)}_{\epsilon,\mu}$ represents the invariant density of the discrete random walk defined by $P^{(n)}_{\epsilon,\mu}$. As a consequence, when constructing the TMDmap, the eigenvectors of $L^{(n)}_{\epsilon,\mu}$ must be renormalized again with $\left(D^{(n)}_{\epsilon,\mu}\right)^{-1/2}$ to obtain a diagonalization of $P^{(n)}_{\epsilon,\mu}$. In this paper we are not concerned with these eigenvector embeddings; instead, we focus on the generator $L^{(n)}_{\epsilon,\mu}$ for which it does not matter which normalization was used. 
        \item In Dmap, the renormalization step constructs the matrix 
        \begin{align}
            K^{(n)}_{\epsilon, \mu} = K^{(n)}_{\epsilon}D^{-\alpha}_{\epsilon}.
        \end{align}
        The parameter $\alpha$ is intended to modulate the effect of the sampling density $\rho$ appearing as drift term in the $\epsilon \to 0$ limit of the generator $\mathcal{L}$. The $\alpha = 1$ setting removes the effect of the density entirely. In TMDmap, this $\alpha = 1$ renormalization is combined with a reweighting by $\mu^{1/2}$, leading to the reappearance of a drift term related to $\mu$ instead of $\rho$ in Theorem \ref{Lthm}. Thus, TMDmap can be thought of as \emph{Dmap with density reweighting or importance sampling}.  
    \end{enumerate}
\end{remark}

\renewcommand{\arraystretch}{2.5}
\begin{table}[h]
\begin{tabular}{|l|l|l|}
\hline
 &
  Dmap &
  \begin{tabular}[c]{@{}l@{}}TMDmap \end{tabular} \\ \hline
Input &
  \begin{tabular}[c]{@{}l@{}}Dataset $\mathcal{X}(n) \subset \mathcal{M}$, \\ Parameter $\alpha \in [0,1]$,\\ Kernel Bandwidth $\epsilon > 0$
  \end{tabular} 
  &
  \begin{tabular}[c]{@{}l@{}}Dataset $\mathcal{X}(n) \subset \mathcal{M}$, \\ Target density $\mu: \mathbb{R}^d \to [0,1]$,\\ Kernel Bandwidth $\epsilon > 0$ 
  \end{tabular} 
  \\ \hline
 
Kernel matrix & $[K_{\epsilon}]_{i,j} = k_{\epsilon}(x_i, x_j)$
& $[K_{\epsilon}]_{i,j} = k_{\epsilon}(x_i, x_j)$
\\ \hline
Density estimate &
  $D^{(n)}_{\epsilon} = \text{diag}(K_{\epsilon}\mathbf{1})$ &
  \begin{tabular}[c]{@{}l@{}}$D^{(n)}_{\epsilon} = \text{diag}(K_{\epsilon}\mathbf{1})$,\\ $M = \text{diag}([\mu(x_i)]_{x_i \in \mathcal{X}(n)})$\end{tabular} \\ \hline
Normalization &
  $K^{(n)}_{\epsilon, \alpha} = K_{\epsilon}D^{-\alpha}_{\epsilon}$ &
  $K^{(n)}_{\epsilon, \mu} = K_{\epsilon}D^{-1}_{\epsilon}M^{1/2}$ 
   \\ \hline
Markov Process &
  \begin{tabular}[c]{@{}l@{}}$D^{(n)}_{\epsilon, \alpha} = \text{diag}(K^{(n)}_{\epsilon, \alpha}\mathbf{1})$, \\ $P^{(n)}_{\epsilon, \alpha} = \left(D^{(n)}_{\epsilon, \alpha}\right)^{(n)}K_{\epsilon, \alpha}^{(n)}$\end{tabular} &
  \begin{tabular}[c]{@{}l@{}}$D^{(n)}_{\epsilon, \mu} = \text{diag}(K^{(n)}_{\epsilon, \mu}\mathbf{1})$, \\ $P^{(n)}_{\epsilon, \mu} = \left(D^{(n)}_{\epsilon, \mu}\right)K^{(n)}_{\epsilon, \mu}$\end{tabular} \\ \hline
Generator &
  $L^{(n)}_{\epsilon, \alpha} = \epsilon^{-1}(I - P^{(n)}_{\epsilon, \alpha})$ &
  $L^{(n)}_{\epsilon, \mu} = \epsilon^{-1}(I - P^{(n)}_{\epsilon, \mu})$ \\ \hline
\end{tabular}
\caption{Summary of Dmap and TMDmap algorithms. The kernel function $k_\epsilon$  is given by \eqref{ker1}.
}
\label{table:DmapTMDmap}
\end{table}

\subsection{Relevant differential geometry}
\label{subsec: diff geo}
Throughout this paper, it is assumed that $\mathcal{M}$ is a Riemannian manifold, i.e it is equipped with a  \emph{Riemannian metric} $g$ acting on the tangent space $T_x\mathcal{M}$ and varying smoothly with $x$. More precisely, fixing a chart $(U_\alpha, X_\alpha)$ on $\mathcal{M}$ we define the tangent space $T_x \mathcal{M}$ at $x \in \mathcal{M}$: 
\begin{align*}
T_x\mathcal{M} = \{v \in \mathbb{R}^d \mid v = (X_{\alpha}^{-1} \circ \gamma)'(0) \text{ for } \, \gamma : (-\epsilon, \epsilon) \to \mathcal{M} \text{ with } \gamma(0) = x \}.
\end{align*}
Then the metric $g$ can be defined in local coordinates as follows:
\begin{definition}[Riemannian metric]
    A Riemannian metric $g$ is a family of symmetric positive semi-definite 2-tensors $\{g_x\}_{x \in \mathcal{M}}$ where 
    \begin{align}
    g(x)[v,w] = g_{ij}(x)v^{i}w^{j}\quad \forall \, v,w \in T_{x}\mathcal{M}
    \end{align}
    and for every chart $X_\alpha: U_{\alpha} \to \mathbb{R}^d$, 
    the functions $g_{ij}\circ X_{\alpha}^{-1}$ are smooth functions on $\mathbb{R}^d$. 
\end{definition}
Here, the Einstein summation convention is used that implies the summation over all indices repeated as superscripts and subscripts, i.e 
\begin{align*}
    a^ib_i := \sum_{i=1}^{k}a_ib_i.
\end{align*}

The metric tensor defines a distance between points on $\mathcal{M}$.
\begin{definition}
    Let $\tilde{\gamma}: [0,1] \to \mathcal{M}$ be a differentiable curve on $\mathcal{M}$. The energy associated with this curve is defined as 
    \begin{align}
        E[\tilde{\gamma}] = \int_{a}^{b}g(\tilde{\gamma}(t))(\dot{\tilde{\gamma}}(t), \dot{\tilde{\gamma}}(t))\,dt. 
    \end{align}
    A geodesic is defined as the energy minimizing curve between any two points $a,b \in \mathcal{M}$:
    \begin{align}
        \gamma := \underset{\substack{\tilde{\gamma}(0)=a\\\tilde{\gamma}(1)=b}}{\arg\min}\,E[\tilde{\gamma}].
    \end{align}
\end{definition}
It follows from the theory of ODEs that 
for every $x \in \mathcal{M}$ and a tangent vector $v \in T_{x}(\mathcal{M})$ there exists a geodesic $\gamma_v: (-\epsilon, \epsilon) \to \mathcal{M}$ 
satisfying $\gamma(0) = x$, $\dot{\gamma} = v$. By rescaling the given tangent vector, the domain of the geodesic can be mapped to an interval containing $[-1,1]$. Therefore, there is a ball $B_{r(x)}(0) \subseteq T_{x}(\mathcal{M})$ for which tangent vectors can be mapped to geodesics defined on $[-1,1]$. This motivates the definition of the \emph{exponential map}. 
\begin{definition}[Exponential map]
    Fix $x \in \mathcal{M}$. Then for some radius $r(x) > 0$, the exponential map $\exp_{x}: B_{r(x)}(0) \to \mathcal{M}$ is the map taking tangent vectors to geodesics, i.e. $\exp_{x}(v) := \gamma_{v}(1)$. 
\end{definition}
The largest radius $r(x)$ such that $\exp_x$ is a diffeomorphism from $B_{r(x)}$ onto its image is the \emph{injectivity radius at $x$}. The infimum of all such radii over $x \in \mathcal{M}$ is termed the \emph{injectivity radius of $\mathcal{M}$}:
\begin{align}
    \text{inj}(\mathcal{M}) = \underset{x \in \mathcal{M}}{\inf}\, \sup\{r \mid \exp_{x}: B_{r}(0) \to \mathcal{M} \text{ is a diffeomorphism}\}.
\end{align}
For compact manifolds $\mathcal{M}$, $0 < \text{inj}(\mathcal{M}) < \infty$. Therefore, the exponential map defines a set of coordinates on $\mathcal{M}$ such that each vector in $B_{\text{inj}(\mathcal{M})}(0)$ is mapped to a geodesic. This is the system of \emph{normal coordinates}.
\begin{definition}
    Let $U = B_{\text{inj}(\mathcal{M})}(0)$ and $V_x = \exp_{x}B_{\text{inj}(\mathcal{M})}(0)$. Then the map $s_x: U \to V_x$ taking $v$ to $ \exp_{x}(v)$ is a diffeomorphism at $x$. Consequently, $(V_x, s_{x}^{-1}) := (V_x, \hat{s}_{x})$ defines a chart on $\mathcal{M}$ termed normal coordinates.
\end{definition}
Whenever the point $x$ is clear from context, we will drop it as a subscript to denote $s_x$ simply as $s$. The metric $g$ at $x$ becomes the identity tensor in normal coordinates. Assuming normal coordinates and fixing $f \in C^{\infty}(\mathcal{M})$, the Laplace-Beltrami operator can be written in terms of the Euclidean Laplacian: 
\begin{align}
    \Delta f(x) := \partial_{i}^{i}f(x) = \Delta_{\mathbb{R}^d} f \circ s_{ x} (0).
\end{align}

\subsection{Transition path theory}
\label{subsec: tpt}
TMDmap was particularly designed as an enhancement of Dmap for the cases when i.i.d. samples of the invariant measure $\mu$ are difficult to generate or unreasonable to use due to metastability. 
Metastability is a widespread feature of many biophysical systems such as genetic switches \cite{allen2006simulating} or all-atom simulations of macromolecules \cite{vani2022computing}. 
Understanding phenomena such as conformal changes in biomolecules or protein folding relies on an effective quantification of rare events. In this setting, a popular framework for quantifying rare events is transition path theory (TPT) \cite{vanden2006towards}. 

Although TPT allows the study of general Ito diffusions, this paper focuses only on systems governed by the overdamped Langevin dynamics~\eqref{eq: old}. { Let $A, B \subseteq \mathcal{M}$ be two closed disjoint sets with nonempty interiors such that their boundaries coincide with the boundaries of their interiors. }  TPT is concerned with quantifying transitions from $A$, the \emph{reactant set}, to $B$, the \emph{product} set. The quantities of interest are \emph{reactive current} whose flowlines and density concentration delineate the transition channels, the \emph{transition rates} from $A$ to $B$, and the \emph{escape rate} from $A$. In TPT, these quantities are calculated through the forward committor function, or simply the committor, \(q(x)\). The committor characterizes the probability that the trajectory $X_{t}^x$ starting  at $x\in\mathcal{M}\backslash(A\cup B)=:\mathcal{M}_{AB}$ will hit $B$ before $A$. 

The Feynman-Kac formula can be used to show that the committor is the solution to the boundary value problem involving the generator of the process:
\begin{align}
    \mathcal{L}q(x) &= 0,\quad
    x \in \mathcal{M}_{AB},\notag \\
    q& = 0,   \quad x\in \partial A, \label{eq: committor bvp}\\
  q& = 1, \quad x\in \partial B\notag
\end{align}
The time-reversibility of \eqref{eq: old} leads to significant simplifications. First, the \emph{backward committor}, the probability that the process arriving at $x$ last hit $A$ rather than $B$ is given by $1-q(x)$. Second, the committor problem \eqref{eq: committor bvp} admits a variational formulation
\begin{align}
    q =\min_{\substack {u\in C^1(\mathcal{M}_{AB})\\u(\partial A) = 0\\
    u(\partial B) = 1}}\int_{\mathcal{M}_{AB}}\|\nabla  u(x)\|_{2}^{2}\,e^{-\beta V(x)}\,d\text{vol}(x). \label{eq: var_comm}
\end{align}

The key quantities of interest are defined via the committor.  
\begin{enumerate}
    \item \textbf{Reactive Current}:  The reactive current 
    \begin{equation}
        \label{eq:rcurrent}
        J(x) = \beta^{-1}\mu(x)\nabla q(x)
    \end{equation}
    demarcates the tube containing the most likely transition pathways from $A$ to $B$.

    \item \textbf{Transition rate}: The transition rate from $A$ to $B$ is defined by
    \begin{equation}
        \label{eq:ratedef}
        \nu_{AB} = \lim_{T\rightarrow\infty} \frac{N_{AB}}{T},
    \end{equation}
    where $N_{AB}$ is the number of transitions from $A$ to $B$ observed during a time interval of length $T$. It can be expressed as \cite{vanden2006towards}
    \begin{equation}
        \label{eq:rateAB}
        \nu_{AB} = \int_{\Sigma_{AB}} J(x) \cdot n(x) \, d\sigma(x) = \beta^{-1}\int_{\mathcal{M}_{AB}}\|\nabla q(x)\|_2^2\mu(x)d{\rm vol}(x)
    \end{equation}
    where \(n(x)\) is the unit normal vector to the dividing surface $\Sigma_{AB}$ separating the sets \(A\) and \(B\).
    It represents the average number of transitions from $A$ to $B$  per unit time. A common choice for $\Sigma_{AB}$ is the \emph{iso-committor} surface $\{x\in\mathcal{M}: q(x) = 0.5\}$.

    \item 
    \textbf{{The probability that $A$ was hit more recently than $B$}}: The probability that an infinitely long trajectory of \eqref{eq: old} last hit $A$ rather than $B$ at a randomly picked time $t$
    is given by 
    \begin{align}
        \rho_A = \int_{\mathcal{M}\backslash B}(1 - q(x))\mu(x)\,dx.
    \end{align}
    \item \textbf{Escape rate}:  The escape rate \(k_{AB}\) from the set \(A\) to $B$ is defined as 
    \begin{equation}
        \label{eq:escrate}
        k_{AB} = \lim_{T\rightarrow\infty} \frac{N_{AB}}{T_A},  
    \end{equation}
    where $T_A$ is the total time within the interval of length $T$ during which the trajectory last hit $A$. It is related to the transition rate $\nu_{AB}$ via     
    \begin{equation}
        \label{eq:kAB}
        k_{AB} = \frac{{\nu_{AB}}}{{\rho_A}}.
    \end{equation}
\end{enumerate}

\subsection{Solving the committor problem with TMD map}
\label{subsec: solving cp with tmd}
The application of TPT relies on solving the committor problem \eqref{eq: committor bvp}. The
committor BVP \eqref{eq: committor bvp} can be solved analytically only in special cases. Therefore, the development of numerical methods for solving the committor problem is very important. For low-dimensional cases, i.e. when $m=2$ or $3$, one may use a finite difference or finite element method to approximate the solution to \eqref{eq: committor bvp} by discretizing $\mathcal{L}$ on a finite mesh in $\mathbb{R}^m$. For dimensions $m \geq 4$, meshless methods of discretizing $\mathcal{L}$ are needed. Such methods for solving the committor problem
include a variety of neural network-based solvers using the variational
formulation \eqref{eq: var_comm} \cite{khoo2019solving, li2019computing}, the finite expression method \cite{song2023finite}, and solvers based on diffusion maps \cite{Trstanova2019LocalAG, evans2021computing, evans2022computing}. Specifically, TMDmap can be applied to solve the committor problem as follows. Let $L:= L^{(n)}_{\epsilon,\mu}$ be the generator obtained through the TMDmap algorithm outlined in Section \ref{sec: tmdmaps} with input dataset $\mathcal{X}(n) := \{x_i\}_{i=1}^{n}$. Furthermore, let the vector $q_{n, \epsilon}$ be the $n$-dimensional vector denoting the desired approximation to the committor at the points $\{x_i\}$ where $[q_{n,\epsilon}]_i \approx q(x_i)$. 
Let $\mathcal{I} = \{i \mid x_i \in \mathcal{M}_{AB}\}$ and $\mathcal{D} = \{i \mid x_i \in A \cup B\}$. Reindexing the set $\mathcal{X}(n)$ so that $\mathcal{I} = \{1, \ldots j\}$ and $\mathcal{D} := \{j+1,\ldots,n\}$ gives a convenient block form of the numerical committor 
\begin{align}
    q_{n,\epsilon} = \left[\begin{array}{c}q^{\mathcal{I}}_{n,\epsilon}\\ q^{\mathcal{D}}_{n,\epsilon}\end{array}\right]\quad{\rm where}\quad [q^{\mathcal{D}}_{n,\epsilon}]_i=\begin{cases}0,&x_i\in A\\ 1,& x_i\in B\end{cases},\label{eq:qne}
\end{align}
and the generator matrix $L$
\begin{align}
    L := \begin{bmatrix} L^{\mathcal{I}\mathcal{I}} & L^{\mathcal{I}\mathcal{D}} \\ L^{\mathcal{D} \mathcal{I}} & L^{\mathcal{D}\mathcal{D}} \end{bmatrix}.
\end{align}
The discrete version of the committor problem \eqref{eq: committor bvp} is 
\begin{align}
    \Big[L^{\mathcal{I}\mathcal{I}} \quad L^{\mathcal{I}\mathcal{D}}\Big]
    \begin{bmatrix} q^{\mathcal{I}}_{n,\epsilon} \\ q^{\mathcal{D}}_{n,\epsilon} \end{bmatrix} = \mathbf{0},
\end{align}
or, equivalently, 
\begin{align}
    L^{\mathcal{I}\mathcal{I}} q^{\mathcal{I}}_{n,\epsilon} = -L^{\mathcal{I}\mathcal{D}}q^{\mathcal{D}}_{n,\epsilon}. \label{eq: solving committor problem algorithm}
\end{align}
A more general Dirichlet BVP on $\mathcal{M}$,
\begin{align}
    \mathcal{L}u(x) &= f, \quad x \in  \mathcal{M} \setminus \Omega, \notag \\
    u &= g, \quad x \in \partial \Omega, \label{eq: general boundary value problem}
\end{align}
can be solved by TMDmap using a similar linear system. 
The TMDmap numerical solution to \eqref{eq: general boundary value problem} is the vector $u_{n,\epsilon}$ that satisfies
\begin{align}
4\beta^{-1}L^{\mathcal{I}\mathcal{I}}u^{\mathcal{I}}_{n,\epsilon} = f^{\mathcal{I}} -4\beta^{-1}L^{\mathcal{I} \mathcal{D}}g,\quad u_{n,\epsilon}^{\mathcal{D}}= g. \label{eq:unum}
\end{align}
Note that $L^{\mathcal{I}\mathcal{I}}$ is the submatrix of $L$ obtained by choosing the rows and columns with indices from $\mathcal{I}$. $L^{\mathcal{I}\mathcal{I}}$ is the Dirichlet Laplacian in spectral graph theory and is known to be invertible (due to, for instance, the Gershgorin disk theorem). Thus the above linear system is well-posed.

%%%%%%%%%%%% R E S U L T S

\section{Results}
\label{sec: results}
In this section, we present our 
theorems quantifying the convergence of  $L^{(n)}_{\epsilon, \mu}f$ to $\mathcal{L}f$ as $\epsilon\rightarrow 0$ and $n\rightarrow\infty$.
Their proofs utilize techniques from the theory of the convergence of Graph Laplacians to the differential operators on a manifold $\mathcal{M}$ \cite{belkin2006convergence,singer2006graph,hein2007graph}. 
For the sake of easier navigation through our theorems, we place all long proofs in  Section \ref{sec: results_proofs} devoted to proofs. 

\subsection{Problem setup} 
To set up the main problem in this paper, we make the following assumptions. 
\begin{assumption}
\label{Ass1}
    The point cloud $\mathcal{X}(n)$ is sampled i.i.d. from the density $\rho$ supported on $\mathcal{M}$. 
\end{assumption}
\begin{assumption}
\label{Ass2}
    Let $x\in\mathcal{M}$ be an arbitrary point. It can but does not have to belong to the point cloud $\mathcal{X}(n)$. 
\end{assumption}
\begin{assumption}
\label{Ass3}
    We fix $f \in C^{\infty}(\mathcal{M})$ to be an arbitrary function. Abusing notation, we will denote both the smooth function $f$ and the vector $[f(x_i)]_{i=1}^n$ by the same symbol $f$.
\end{assumption}
\begin{assumption}
\label{Ass4}
    Let $V: \mathcal{M} \to \mathbb{R}$ be a smooth potential function such that SDE \ref{eq: old} admits the invariant Gibbs density $\mu\propto \exp(-\beta V)$. The measure $\mu$ is chosen as the target measure. The sampling density $\rho$ is such that $\mu$ is absolutely continuous with respect to $\rho$. 
\end{assumption}
Let $L^{(n)}_{\epsilon,\mu}$, $\mathcal{L}_{\epsilon,\mu}$, and $\mathcal{L}$ be the generators defined by \eqref{Lmatr}, \eqref{Leps}, and \eqref{eq: generator} respectively. 
The law of large numbers implies  the convergence $L_{\epsilon, \mu}^{(n)}f(x)\longrightarrow \mathcal{L}_{\epsilon,\mu}f(x) $  as $n\rightarrow \infty$, while Theorem \ref{Lthm} \cite{banisch2020diffusion} guarantees the convergence $\mathcal{L}_{\epsilon,\mu}f(x)\longrightarrow \tfrac{\beta}{4} \mathcal{L}f(x) $ as $\epsilon\rightarrow 0$.

The bias and the variance errors are defined by:
\begin{align}
    \text{Bias error:}\qquad &|4\beta^{-1}\mathcal{L}_{\epsilon,\mu}f(x) - \mathcal{L}f(x)| \\
    \text{Variance error:}\qquad &|L^{(n)}_{\epsilon,\mu}f(x) - \mathcal{L}_{\epsilon,\mu}f(x)| 
\end{align}

\subsection{Bias Error}
\label{subsec: bias error}
The bias error formula including the prefactors for the $O(\epsilon)$ term is \eqref{Lop} is established in the following theorem.

\begin{theorem}[Bias Error]
    \label{thm: bias error}
    Under Assumptions \ref{Ass1}--\ref{Ass4}, the bias error is given by
    \begin{align}
    4\beta^{-1}\mathcal{L}_{\epsilon, \mu} - \mathcal{L} &= \frac{\epsilon}{4}\left[\mathcal{Q}\left(f\mu^{1/2}\right)- f \mathcal{Q}(\mu^{1/2}) \right] + \frac{\epsilon}{16}\left(2\nabla f \cdot \nabla \left(\mu^{1/2}\omega \right) + (\mu^{1/2}\omega)\Delta f \right) \notag \\
    &- \frac{\epsilon}{16}\left(2\nabla f \cdot \nabla \left(\mu^{1/2}\frac{\Delta \rho}{\rho} \right) + \left(\mu^{1/2}\frac{\Delta \rho}{\rho}\right)f \right) \notag \\
    &+ \frac{\epsilon}{16}\left[\frac{\Delta (\mu^{1/2})}{\mu^{1/2}} - \left(\frac{\Delta \rho}{\rho} - \omega\right)\right]\left[f\frac{\Delta (\mu^{1/2})}{\mu^{1/2}} - \frac{\Delta (\mu^{1/2}f)}{\mu^{1/2}}\right] + O(\epsilon^2) \label{eq: full bias error}  \\
    &= :\epsilon\left(\mathcal{B}_{1}[f,\mu](x) + \mathcal{B}_{2}[f,\mu,\rho](x) + \mathcal{B}_{3}[f,\mu,\rho](x)\right) + O(\epsilon^2).\notag
    \end{align}
 \end{theorem}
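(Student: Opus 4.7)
The approach is to asymptotically expand the numerator and denominator of
\[\mathcal{P}_{\epsilon,\mu}f(x) = \frac{\mathcal{K}_{\epsilon,\mu}f(x)}{\mathcal{K}_{\epsilon,\mu}e(x)}\]
separately to $O(\epsilon^2)$, then form the ratio and subtract $f$. Since $\mathcal{L}_{\epsilon,\mu}f = (\mathcal{P}_{\epsilon,\mu}f - f)/\epsilon$, the $O(\epsilon^2)$ term of the ratio becomes the $O(\epsilon)$ bias after dividing. The key technical input is the standard Gaussian-convolution expansion on a Riemannian manifold: for smooth $g$,
\[\int_{\mathcal{M}} k_\epsilon(x,y)g(y)\,dy = (\pi\epsilon)^{d/2}\Bigl[g(x) + \tfrac{\epsilon}{4}(\Delta g + g\omega)(x) + \epsilon^2\,\mathcal{Q}(g)(x) + O(\epsilon^3)\Bigr],\]
where $\omega$ is the standard volume-form correction at $x$ (a multiple of the scalar curvature) and $\mathcal{Q}$ is the fourth-order coefficient coming from higher Taylor terms of $g$ and the Jacobian of the exponential map. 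This lemma is proved by passing to normal coordinates at $x$, Taylor-expanding $g$ and the metric volume, and evaluating the resulting Gaussian moments; it is the same tool used in~\cite{hein2007graph,singer2006graph,belkin2006convergence}, but retained one order further in $\epsilon$.

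Applying the lemma with $g=\rho$ and inverting by geometric series gives
\[\frac{1}{\rho_\epsilon(y)} = (\pi\epsilon)^{-d/2}\rho(y)^{-1}\Bigl[1 - \tfrac{\epsilon}{4}\bigl(\tfrac{\Delta\rho}{\rho} + \omega\bigr)(y) + \epsilon^2 R(y) + O(\epsilon^3)\Bigr],\]
with $R$ a computable combination of $\mathcal{Q}(\rho)/\rho$ and $(\Delta\rho/\rho+\omega)^2$. Substituting into $\mathcal{K}_{\epsilon,\mu}f(x) = \int k_\epsilon(x,y)\mu^{1/2}(y)f(y)\rho(y)/\rho_\epsilon(y)\,dy$ and applying the lemma a second time---with the now $\epsilon$-dependent integrand expanded to order $\epsilon^2$---yields $\mathcal{K}_{\epsilon,\mu}f = A_0 + \epsilon A_1 + \epsilon^2 A_2 + O(\epsilon^3)$ with $A_0 = \mu^{1/2}f$. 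Setting $f\equiv 1$ gives the companion expansion $\mathcal{K}_{\epsilon,\mu}e = B_0 + \epsilon B_1 + \epsilon^2 B_2 + O(\epsilon^3)$ with $B_0 = \mu^{1/2}$.

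A Neumann expansion of the ratio then gives
\[\mathcal{P}_{\epsilon,\mu}f - f = \epsilon\Bigl(\tfrac{A_1}{B_0}-f\tfrac{B_1}{B_0}\Bigr) + \epsilon^2\Bigl(\tfrac{A_2}{B_0}-\tfrac{A_1 B_1}{B_0^2}-f\tfrac{B_2}{B_0}+f\tfrac{B_1^2}{B_0^2}\Bigr)+O(\epsilon^3).\]
The $O(\epsilon)$ coefficient recovers $\tfrac{\beta}{4}\mathcal{L}f$, consistent with Theorem~\ref{Lthm}. In the $O(\epsilon^2)$ coefficient, the $\mathcal{Q}$-contributions from $A_2$ and $B_2$ partially cancel when divided by $B_0$, organizing into the commutator $\tfrac{1}{4}[\mathcal{Q}(f\mu^{1/2}) - f\,\mathcal{Q}(\mu^{1/2})]$ of $\mathcal{B}_1$. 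The gradient-type pieces in $\mathcal{B}_1$ and all of $\mathcal{B}_2$ arise from the cross-terms where the ordinary $O(\epsilon)$ Laplacian correction of the lemma meets the $\omega$- and $\Delta\rho/\rho$-corrections of the inverted KDE. Finally, the genuinely quadratic quotient pieces $-A_1 B_1/B_0^2 + f B_1^2/B_0^2$ factorize, after regrouping via the product rule $\Delta(uv) = u\Delta v + v\Delta u + 2\nabla u\cdot\nabla v$, into the stated product form of $\mathcal{B}_3$.

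The hard part is this last regrouping step: a priori, the $O(\epsilon^2)$ coefficient is a sum of roughly ten pieces involving $f,\mu,\rho,\omega$ and their first and second derivatives, and the identification with $\mathcal{B}_1+\mathcal{B}_2+\mathcal{B}_3$ is not obvious. A clean strategy is to expand the three $\mathcal{B}_i$ on the right-hand side of the theorem using the product rule and match term-by-term. A secondary subtlety is that the $\mathcal{Q}(\rho)/\rho$ piece inside $R$ contributes to both $\mathcal{B}_1$ and $\mathcal{B}_3$ through cross-terms in the quotient, so it cannot be discarded as ``higher order'' but must be carried through. A useful sanity check during the algebra is that the second bracket of $\mathcal{B}_3$ equals $-\beta\mathcal{L}f$ via the product rule, so $\mathcal{B}_3$ vanishes whenever $\mathcal{L}f=0$---matching the committor cancellation announced later in the paper.
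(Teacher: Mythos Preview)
Your proposal is correct and follows essentially the same route as the paper: second-order kernel expansion, geometric-series inversion of $\rho_\epsilon$, expansion of $\mathcal{K}_{\epsilon,\mu}f$ and $\mathcal{K}_{\epsilon,\mu}e$ to $O(\epsilon^2)$, Neumann expansion of the quotient, and algebraic regrouping into $\mathcal{B}_1,\mathcal{B}_2,\mathcal{B}_3$. Two small cautions: the paper's expansion lemma reads $\Delta g - \omega g$ (not $+\omega$) and carries a factor $\tfrac{\epsilon^2}{4}\mathcal{Q}$, so be consistent with those conventions when matching the final formula; and the $\mathcal{Q}(\rho)/\rho$ piece you flag actually \emph{cancels} in the difference $\mathcal{R}_{\mu,\rho}f/\mu^{1/2} - f\,\mathcal{R}_{\mu,\rho}1/\mu^{1/2}$ (it appears with coefficient $f$ in both terms), so it contributes to neither $\mathcal{B}_1$ nor $\mathcal{B}_3$---you must carry it through only to see it drop out.
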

A detailed proof of Theorem \ref{thm: bias error} is found in Section \ref{subsec: bias error proofs}. Its pivotal component is the second-order kernel expansion formula presented in Lemma \ref{lem: expansion} below whose proof is also postponed until Section \ref{subsec: bias error proofs}. Lemma \ref{lem: expansion} is a second-order upgrade of the first-order kernel expansion formula from \cite{coifman2006diffusion,hein2005graphs} used in several previous works \cite{berry2016variable,singer2006graph,cheng2020convergence}.
The first-order kernel expansion formula from \cite{coifman2006diffusion,hein2005graphs} is insufficient for our goal of establishing how the bias error depends on $\rho$, $\mu$, and $f$ along with $\epsilon$ because it leads to hiding these dependencies into an $O(\epsilon)$ term in the expansion of the generator $\mathcal{L}_{(\cdot)}$.

\begin{lemma}[Second-Order Kernel Expansion Formula] 
\label{lem: expansion}
    Let $f \in C^{\infty}(\mathcal{M})$ and $\mathcal{G}_{\epsilon}$ be the integral operator defined by 
    \begin{align}
        \mathcal{G}_{\epsilon}f(x) = \int_{\mathcal{M}}k_{\epsilon}(x,y)f(y)\,d{\rm vol}(y).
    \end{align}
    Then, for small enough $\epsilon$, $\mathcal{G}_{\epsilon}f$ admits the following expansion at $x$: 
    \begin{align}
        (\pi \epsilon)^{-d/2}\mathcal{G}_{\epsilon}f(x) = f(x) + \frac{\epsilon}{4}(\Delta f(x) - \omega(x)f(x)) + \frac{\epsilon^2}{4}\mathcal{Q}f(x) + O(\epsilon^3), \label{eq: expansion equation}
    \end{align}
    where $\mathcal{Q}$ is a fourth-order differential operator on $\mathcal{M}$.
    In particular, if $\mathcal{M}$ is isometric to $\mathbb{R}^d$ in a neighborhood of $x$ (i.e $\mathcal{M}$ is locally flat) then
    \begin{align}
        \mathcal{Q} = \partial^{i}_{i}\partial^{j}_{j} + 2\partial^{ii}_{ii}. \label{eq: expansion for R^d}
    \end{align}
\end{lemma}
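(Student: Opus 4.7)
The plan is to evaluate the integral $\mathcal{G}_{\epsilon}f(x)$ by the standard asymptotic strategy of Coifman--Lafon \cite{coifman2006diffusion}, Hein et al.\ \cite{hein2005graphs}, and Singer \cite{singer2006graph}, but carried out to one higher order so that the $O(\epsilon^2)$ coefficient $\mathcal{Q}f$ is retained explicitly. The first step is a localization argument: because the Gaussian $k_{\epsilon}(x,y)=e^{-\|x-y\|^2/\epsilon}$ decays faster than any power of $\|x-y\|$, the contribution to $\mathcal{G}_{\epsilon}f(x)$ from outside a small ball of radius $r(\epsilon)=C\sqrt{\epsilon\log(1/\epsilon)}$ is $O(\epsilon^N)$ for every $N$. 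For $\epsilon$ sufficiently small this ball lies within the injectivity radius of $x$, so the entire remaining integral can be pulled back to a neighborhood of $0\in T_x\mathcal{M}$ via normal coordinates $y=\exp_x(u)$.

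After this change of variables, three Taylor expansions around $u=0$ must be carried out through order four. First, the intrinsic volume element satisfies
\begin{equation*}
\sqrt{|g(u)|} = 1 - \tfrac{1}{6}\operatorname{Ric}_{ij}(x)\,u^iu^j + O(\|u\|^3).
\end{equation*}
Second, the ambient Euclidean distance used inside the kernel is related to the geodesic length by
\begin{equation*}
\|x-\exp_x(u)\|^2 = \|u\|^2 - c\,\|II_x(u,u)\|^2 + O(\|u\|^5),
\end{equation*}
where $II_x$ is the second fundamental form of the embedding $\mathcal{M}\hookrightarrow\mathbb{R}^m$; exponentiating and expanding in $\epsilon$ gives $k_{\epsilon}(x,\exp_x(u))=e^{-\|u\|^2/\epsilon}\bigl(1+(c/\epsilon)\|II(u,u)\|^2+\cdots\bigr)$. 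Third, in normal coordinates the Christoffel symbols vanish at the origin so ordinary partial derivatives equal covariant ones, and one Taylor expands $f\circ\exp_x$ to fourth order in $u$.

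Substituting all three expansions into the integrand and applying the anisotropic rescaling $u=\sqrt{\epsilon}\,v$, one rewrites the integral as
\begin{equation*}
(\pi\epsilon)^{d/2}\int_{\mathbb{R}^d} e^{-\|v\|^2}\bigl(1 + \epsilon\,P_1(v;x) + \epsilon^2\,P_2(v;x) + \cdots\bigr)\,dv,
\end{equation*}
where $P_1,P_2$ are polynomials in $v$ whose coefficients are polynomials in $\partial^\alpha f(x)$, the Ricci tensor of $\mathcal{M}$, and the second fundamental form of the embedding. Odd-degree monomials integrate to zero; the standard Gaussian moments
\begin{equation*}
\int_{\mathbb{R}^d} v^iv^j e^{-\|v\|^2}dv = \tfrac{\pi^{d/2}}{2}\delta^{ij},\qquad \int v^iv^jv^kv^l e^{-\|v\|^2}dv = \tfrac{\pi^{d/2}}{4}\bigl(\delta^{ij}\delta^{kl}+\delta^{ik}\delta^{jl}+\delta^{il}\delta^{jk}\bigr)
\end{equation*}
collapse the surviving terms. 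At order $\epsilon$ the Ricci, second-fundamental-form, and pure $f$-derivative contributions combine, the $\Delta f$ piece arising from the Taylor expansion of $f$ while the $-\omega f$ piece collects all remaining curvature and extrinsic terms. At order $\epsilon^2$ the same Gaussian moment computation produces $\mathcal{Q}f$.

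The main obstacle is the $O(\epsilon^2)$ bookkeeping: this coefficient receives cross-terms from pairing each order-one correction in the three expansions with every other, so one must enumerate and tensor-contract these pairings correctly. The locally-flat case is both the simplest setting and the natural sanity check: when $\mathcal{M}$ is isometric to $\mathbb{R}^d$ near $x$, both $\operatorname{Ric}$ and $II_x$ vanish there, so only the Taylor expansion of $f$ contributes, $\omega(x)=0$, and $\mathcal{Q}f$ reduces to a linear combination of fourth derivatives of $f$ coming purely from the order-four Gaussian moments, yielding the explicit formula \eqref{eq: expansion for R^d}. In the curved case, exactly the same Gaussian integration is performed, with the extra purely geometric terms absorbed into $\omega$ and into the remaining part of $\mathcal{Q}$; the $O(\epsilon^3)$ remainder is controlled uniformly on $\mathcal{M}$ by the $C^\infty$ bounds on $f$ and on the geometric quantities since $\mathcal{M}$ is compact.
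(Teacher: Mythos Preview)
Your proposal is correct and follows essentially the same approach as the paper: localize to a geodesic ball using Gaussian decay, pull back through normal coordinates, Taylor expand the distance, volume form, and $f$ to fourth order, and evaluate the resulting Gaussian moments. The paper is somewhat less explicit, bundling the distance and Jacobian corrections into abstract terms $\mathcal{C}^1_x(u)$ and $\mathcal{C}^2_x(u)$ rather than naming the Ricci and second-fundamental-form contributions as you do, but the structure and the flat-case computation are identical.
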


 \subsection{Reducing bias error}
Formula \eqref{eq: full bias error}, or, equivalently, \eqref{eq: bias error term 1}, \eqref{eq: bias error term 2}, and \eqref{eq: bias error term 3}, allow us to identify the situations in which some of the prefactors of the bias error are zero. This, in turn, \emph{possibly} speeds up the pointwise convergence of the TMD map estimator $L^{(n)}_{\epsilon,\mu}f(x)$. 
We say \emph{possibly} because the prefactors $\mathcal{B}_1, \mathcal{B}_2$, and $\mathcal{B}_3$ may have opposing signs and produce favorable cancellations. Nonetheless, several bias error terms zero out in the following important cases.
\begin{itemize}
    \item If the sampling density $\rho$ is uniform, the term $\mathcal{B}_2[f,\mu,\rho]$ in \eqref{eq: full bias error} is zero.
    \item If $f$ is such that $\mathcal{L}f = 0$, i.e., $f$ is the committor, then the term $\mathcal{B}_3[f,\mu,\rho]$ in \eqref{eq: full bias error} is zero. This follows from the observation that the second factor in $\mathcal{B}_3[f,\mu,\rho]$ is $\mathcal{L}f$.
    \item If the manifold $\mathcal{M}$ is flat, e.g. $\mathcal{M}$ is a flat $d$-dimensional torus, then $\omega = 0$. Then the second summand of $\mathcal{B}_1[f,\mu]$ in \eqref{eq: full bias error} is zero as $\omega = 0$.
\end{itemize}

\begin{corollary}\label{cor: reducing bias}
  
    If  $\mathcal{L}f = 0$, $\mathcal{M}$ is flat, i.e., $\omega = 0$, and the sampling density $\rho$ is uniform, i.e.,  $\rho(x) = const$, then the bias error reduces to
    \begin{equation}
        \label{eq:bias_error_reduced}
         4\beta^{-1}\mathcal{L}_{\epsilon, \mu}f - \mathcal{L}f =  \frac{\epsilon}{4}\left[\mathcal{Q}\left(f\mu^{1/2}\right)- f \mathcal{Q}(\mu^{1/2}) \right] + O(\epsilon^2),
    \end{equation}
    where $\mathcal{Q}$ is the fourth-order differential operator given by \eqref{eq: expansion for R^d}.
    
\end{corollary}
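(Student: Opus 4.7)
The plan is to start from the bias error expansion in Theorem \ref{thm: bias error} and verify that each hypothesis of the corollary eliminates exactly one (or part of one) of the prefactors $\mathcal{B}_1, \mathcal{B}_2, \mathcal{B}_3$. First I would dispatch the two easy cases. If $\rho$ is constant, then $\Delta \rho \equiv 0$ and hence $\Delta \rho / \rho \equiv 0$, so every summand in \eqref{eq: bias error term 2} vanishes and $\mathcal{B}_2[f,\mu,\rho] \equiv 0$. If $\mathcal{M}$ is flat, so that $\omega \equiv 0$, then the second summand of $\mathcal{B}_1[f,\mu]$ in \eqref{eq: bias error term 1} is identically zero, leaving only the $\mathcal{Q}$-term $\tfrac14[\mathcal{Q}(f\mu^{1/2}) - f\mathcal{Q}(\mu^{1/2})]$.

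The heart of the argument is showing that $\mathcal{L}f = 0$ forces $\mathcal{B}_3 \equiv 0$. Here one must recognize that the second bracket in \eqref{eq: bias error term 3} is (up to a multiplicative constant) equal to $\mathcal{L}f$. Using the product rule,
\begin{equation*}
\Delta(\mu^{1/2} f) = \mu^{1/2}\Delta f + 2\nabla \mu^{1/2}\cdot \nabla f + f\,\Delta \mu^{1/2},
\end{equation*}
dividing through by $\mu^{1/2}$, and subtracting from $f\,\Delta \mu^{1/2}/\mu^{1/2}$ cancels the $f\,\Delta \mu^{1/2}/\mu^{1/2}$ terms and yields
\begin{equation*}
f\frac{\Delta \mu^{1/2}}{\mu^{1/2}} - \frac{\Delta(\mu^{1/2}f)}{\mu^{1/2}} = -\Delta f - \frac{2\nabla \mu^{1/2}}{\mu^{1/2}} \cdot \nabla f = -\bigl(\Delta f + \nabla \log \mu \cdot \nabla f\bigr).
\end{equation*}
Since $\mu \propto e^{-\beta V}$ gives $\nabla \log \mu = -\beta \nabla V$, the right-hand side equals $-\beta \mathcal{L}f$ by \eqref{eq: generator}. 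Hence the second bracket in $\mathcal{B}_3[f,\mu,\rho]$ vanishes whenever $\mathcal{L}f = 0$, regardless of the value of the first bracket, so $\mathcal{B}_3 \equiv 0$.

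Combining these three observations with Theorem \ref{thm: bias error} immediately produces \eqref{eq:bias_error_reduced}. The closed form $\mathcal{Q} = \partial^i_i\partial^j_j + 2\partial^{ii}_{ii}$ in the flat case is the content of \eqref{eq: expansion for R^d} in Lemma \ref{lem: expansion}, so no further computation of $\mathcal{Q}$ is needed. The only step requiring actual calculation is the product-rule manipulation identifying the second factor of $\mathcal{B}_3$ with $-\beta \mathcal{L}f$; I expect that to be the main (albeit modest) obstacle, as the remainder is a bookkeeping exercise.
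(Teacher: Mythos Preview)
Your proposal is correct and follows exactly the reasoning the paper sketches in the bullet points immediately preceding the corollary: the paper asserts without computation that the second factor of $\mathcal{B}_3$ ``is $\mathcal{L}f$,'' while you supply the product-rule verification (which indeed appears in the paper as \eqref{finalop} in the proof of Theorem~\ref{thm: bias error}, up to sign). The remaining two observations match the paper verbatim.
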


%%%%

\subsection{Variance Error}
\label{subsec: variance error}

% The variance error refers to the approximation error 
% \begin{align}
%     |L^{(n)}_{\epsilon,\mu}f(x) - \mathcal{L}_{\epsilon,\mu}f(x)| \label{eq: variance error}
% \end{align}
The variance error depends on the point cloud, a set of $n$ independent identically distributed (i.i.d.) points sampled from the density $\rho$. Therefore, it is natural to estimate it by obtaining a concentration inequality of the form
\begin{align}
    \mathbb{P}\left(|L^{(n)}_{\epsilon,\mu}f(x) - \mathcal{L}_{\epsilon,\mu}f(x)| > \alpha \right) \leq \mathcal{E}(n,\alpha, \epsilon,f,\mu). \label{eq: concentration inequality}
\end{align}

An important general-purpose concentration inequality is Bernstein's inequality, which states that if $X_i$, $i=1,\ldots,n$, are independent random variables with zero mean and the absolute values bounded by $|X_i|\le M$, $1\le i\le n$, then
\begin{equation}
    \label{eq:Bernstein0}
    \mathbb{P}\left(\sum_{i=1}^n X_i \ge t\right) \le \exp\left[-\frac{t^2}{2\sum_{i=1}^n \mathbb{E}[X_i^2] +\tfrac{2}{3}Mt}\right]\quad\forall t>0.
\end{equation}

Inequality \eqref{eq:Bernstein0} allows one to derive bounds for sums of random variables that hold with high probability. In the following subsections of Section \ref{subsec: variance error}, we will construct such a bound for $|L^{(n)}_{\epsilon,\mu}f(x) - \mathcal{L}_{\epsilon,\mu}f(x)|$ in a sequence of steps starting from bounding the discrepancy between the discrete, $\rho_{\epsilon}^{(n)}$, and continuous, $\rho_{\epsilon}$, kernel density estimates (KDEs). 

It is worth noting that in a commonly used strategy for derivation of the variance errors \cite{singer2006graph,berry2016local,berry2016variable}, the point $x_i$ at which the error estimate is computed is removed from the point cloud. Here we show that this is not necessary. Our argument is somewhat more involved but it results in a sharp error estimate.

\subsubsection{Bounds for the error in the kernel density estimate}
Let $x\in\mathcal{M}$ be any point on the manifold. Our first goal is to bound the error in the KDE $\rho_{\epsilon}(x)$. Lemma \ref{lem: expansion} implies that
\begin{equation}
    \label{eq:rhoeps}
    \rho_{\epsilon}(x) := \int_{\mathcal{M}}k_{\epsilon}(x,y)\rho(y)d{\rm vol}(y) = (\pi\epsilon)^{d/2}\left(\rho(x) + O(\epsilon)\right).
\end{equation}
Its discrete estimate $\rho^{(n)}_{\epsilon}(x)$ on the point cloud $\mathcal{X}(n)$ is
\begin{equation}
\label{eq:rhoepsn}
\rho^{(n)}_{\epsilon}(x) := \frac{1}{n}\sum_{j=1}^nk_{\epsilon}(x,x_j).
\end{equation}

% {\color{blue} I commented Assumption that $\rho$ is bounded from above and away from zero. We only need it for Theorem 10.}
% We will need the following 
% \begin{assumption}
%     \label{Ass:rho_bdd}
%     Additionally, we assume that the density $\rho$ is bounded from above and away from zero, i.e.,
% % $\rho > 0$ on $\mathcal{M}$. As a consequence we have constants $\rho^{-}, \rho^{+}$ such that 
% \begin{align} 
%     0 < \rho_{\min} \leq \rho(x) \leq \rho_{\max}\quad \forall \,x \in \mathcal{M}.
% \end{align}
% \end{assumption}

The discrepancy between $\rho_{\epsilon}(x)$ and $\rho^{(n)}_{\epsilon}(x)$ is quantified by the following theorem.
\begin{theorem} [Discrete kernel density estimate]
    \label{thm:M1}
    Let $\mathcal{X}(n) = \{x_j\}_{j=1}^n$ be a point cloud sampled from the density $\rho(x)$. Let $x\in\mathcal{M}$ be either any fixed point on the manifold $\mathcal{M}$ or any point selected from the point cloud $\mathcal{X}(n)$. Then for $\epsilon\rightarrow 0$ and $n\rightarrow\infty$ so that
    \begin{equation}
        \label{eq:thmM1_epsn}
        \lim_{\substack{n\rightarrow\infty\\\epsilon\rightarrow 0}}\frac{n\epsilon^{2+d/2}}{\log n} = \infty,
    \end{equation}
    the discrepancy between $\rho_{\epsilon}(x)$ and $\rho^{(n)}_{\epsilon}(x)$ is bounded, with probability at least $1-n^{-4}$, by
    \begin{equation}
        \label{eq:thmM1}
        \left|\rho^{(n)}_{\epsilon}(x) - \rho_{\epsilon}(x)\right| < 5(\pi\epsilon)^{d/2}\rho^{1/2}(x) \epsilon^2\alpha
    \end{equation}
    where
    \begin{equation}
        \label{eq:thmM1_alpha}
        \alpha = \frac{1}{(2\pi)^{d/4}}\sqrt{\frac{\log n}{n\epsilon^{4+d/2}}}.
    \end{equation}  
 \end{theorem}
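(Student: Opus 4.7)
The plan is to view the discrete KDE as an empirical mean of i.i.d. bounded random variables and apply Bernstein's inequality \eqref{eq:Bernstein0}. Specifically, set $Y_j := k_\epsilon(x, x_j)$ for $j=1,\dots,n$. Since the $x_j$ are drawn i.i.d.\ from $\rho$, the $Y_j$ are i.i.d.\ with $\mathbb{E}[Y_j] = \rho_\epsilon(x)$, $\rho^{(n)}_\epsilon(x) = \tfrac{1}{n}\sum_j Y_j$, and $0 \le Y_j \le 1$ since the Gaussian kernel is bounded by $1$. Applying Bernstein to $X_j := Y_j - \mathbb{E}[Y_j]$, I get $|X_j|\le 1$ and need an estimate of $\sum_j \mathbb{E}[X_j^2] \le n\,\mathbb{E}[Y_1^2]$.

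The key computation is $\mathbb{E}[Y_1^2] = \int_\mathcal{M} k_\epsilon(x,y)^2 \rho(y)\,d\text{vol}(y)$, and the crucial algebraic fact is that $k_\epsilon^2 = k_{\epsilon/2}$. Therefore $\mathbb{E}[Y_1^2] = \rho_{\epsilon/2}(x)$, and Lemma \ref{lem: expansion} (applied at bandwidth $\epsilon/2$ to the density $\rho$) immediately yields
\begin{equation*}
\mathbb{E}[Y_1^2] = \bigl(\pi\epsilon/2\bigr)^{d/2}\bigl(\rho(x) + O(\epsilon)\bigr).
\end{equation*}
This is the step that lets me cleanly identify the variance scaling as $\sigma^2 \sim (\pi\epsilon/2)^{d/2}\rho(x)$, matching the $\rho^{1/2}(x)$ prefactor in the target bound.

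Next, I would set $t = n\cdot 5(\pi\epsilon)^{d/2}\rho^{1/2}(x)\epsilon^2\alpha$ and apply \eqref{eq:Bernstein0} with $M=1$. After simplifying, the prescribed right-hand side is equivalent to $5(\pi\epsilon/2)^{d/4}\rho^{1/2}(x)\sqrt{\log n/n}$, which is exactly of the form $C\,\sigma\sqrt{\log n / n}$ required for a Bernstein deviation bound. The Bernstein exponent becomes
\begin{equation*}
\frac{t^2}{2\sum_j\mathbb{E}[X_j^2] + \tfrac{2}{3}t} \;\ge\; \frac{25(\pi\epsilon/2)^{d/2}\rho(x)\,n\log n}{2n(\pi\epsilon/2)^{d/2}\rho(x)(1+O(\epsilon)) + \tfrac{10}{3}(\pi\epsilon/2)^{d/4}\rho^{1/2}(x)\sqrt{n\log n}},
\end{equation*}
and it remains to verify that the linear term in the denominator is dominated by the quadratic term, so the whole fraction is at least $(25/4)\log n \ge 4\log n$.

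The main obstacle, then, is showing that the Bernstein exponent has the correct asymptotic form under the scaling regime. Dividing numerator and denominator by the quadratic variance piece, this reduces to
\begin{equation*}
\frac{(\pi\epsilon/2)^{d/4}\sqrt{n\log n}}{n(\pi\epsilon/2)^{d/2}} = \sqrt{\frac{\log n}{n(\pi\epsilon/2)^{d/2}}} \longrightarrow 0,
\end{equation*}
which follows from the hypothesis $n\epsilon^{2+d/2}/\log n \to \infty$ (in fact it only needs $n\epsilon^{d/2}/\log n \to \infty$, which is strictly weaker). Once the quadratic term dominates, the exponent is at least $4\log n$ with room to spare, giving $\mathbb{P}(|\rho^{(n)}_\epsilon(x) - \rho_\epsilon(x)| > \text{RHS}) \le 2e^{-4\log n} \le n^{-4}$ for $n$ large enough, yielding the claimed high-probability bound. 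The argument is pointwise in $x$ and makes no use of whether $x$ belongs to $\mathcal{X}(n)$, so both cases in the statement are handled simultaneously—this is the subtle improvement over the leave-one-out arguments of \cite{singer2006graph, berry2016local}.
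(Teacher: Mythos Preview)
Your approach via Bernstein's inequality, together with the key identity $k_\epsilon^2 = k_{\epsilon/2}$ and the variance estimate $\mathbb{E}[Y_1^2] = (\pi\epsilon/2)^{d/2}(\rho(x)+O(\epsilon))$ from Lemma~\ref{lem: expansion}, is exactly the paper's route (Lemmas~\ref{lemma:M1}--\ref{lemma:M2}), and your arithmetic checking that the exponent exceeds $4\log n$ under the hypothesis \eqref{eq:thmM1_epsn} is sound.

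The gap is in your final paragraph. When $x=x_i\in\mathcal{X}(n)$, the random variables $Y_j=k_\epsilon(x_i,x_j)$ are \emph{not} i.i.d.\ with mean $\rho_\epsilon(x_i)$: the diagonal term $Y_i=k_\epsilon(x_i,x_i)=1$ is deterministic once you condition on $x_i$, and its ``mean'' is $1$, not $\rho_\epsilon(x_i)=(\pi\epsilon)^{d/2}(\rho(x_i)+O(\epsilon))$. So the centered variable $X_i=Y_i-\rho_\epsilon(x_i)\approx 1$ is not mean zero, and Bernstein as you have written it does not apply. This is precisely the subtlety the paper isolates in Lemma~\ref{lemma:M2}: one conditions on $x_i$, separates off the deterministic contribution $X_i/n\approx 1/n$, applies Bernstein to the remaining $n-1$ genuinely i.i.d.\ terms, and then checks that $1/n$ is negligible relative to the target bound $5(\pi\epsilon/2)^{d/4}\rho^{1/2}(x_i)\sqrt{\log n/n}$, which amounts to $n\epsilon^{d/2}\log n\to\infty$ and is implied by \eqref{eq:thmM1_epsn}. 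Your claim that the argument ``makes no use of whether $x$ belongs to $\mathcal{X}(n)$'' is therefore incorrect; the dependence is exactly the point, and handling it is what distinguishes the paper's treatment from the leave-one-out arguments you cite.
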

The proof of Theorem \ref{thm:M1} is detailed in Section \ref{sec:proofThm9}.

\subsubsection{Bound for the discrepancy $L_{\epsilon,\mu}^{(n)}f$ and $L_{\epsilon,\mu}f$}
Our next goal is to estimate the discrepancy between the discrete generators,  $L_{\epsilon,\mu}^{(n)}f$ and $L_{\epsilon,\mu}f$  obtained, respectively,  using the discrete, $\rho_{\epsilon}^{(n)}$, and continuous, $\rho_{\epsilon}$, KDEs. 

Let $\mathcal{X}(n)=\{x_j\}_{j=1}^n$ be a point cloud sampled from the density $\rho$. Let $x\in\mathcal{M}$ be either an arbitrary fixed point on the manifold $\mathcal{M}$ or any point of $\mathcal{X}(n)$. To facilitate our calculations, we introduce random variables $F_j^{(n)}$, $F_j$, $G_j^{(n)}$, and $G_j$ defined as
\begin{align}
    &F^{(n)}_j = \frac{k_{\epsilon}(x,x_j)\mu^{1/2}(x_j)f(x_j)}{\rho_{\epsilon}^{(n)}(x_j)},\quad &
    F_j = \frac{k_{\epsilon}(x,x_j)\mu^{1/2}(x_j)f(x_j)}{\rho_{\epsilon}(x_j)},\label{F_j}\\
    &G^{(n)}_j = \frac{k_{\epsilon}(x,x_j)\mu^{1/2}(x_j)}{\rho_{\epsilon}^{(n)}(x_j)},\quad 
    &G_j = \frac{k_{\epsilon}(x,x_j)\mu^{1/2}(x_j)}{\rho_{\epsilon}(x_j)}.\label{G_j}
\end{align}
The discrete Markov operators $P^{(n)}_{\epsilon,\mu}$ and $P_{\epsilon,\mu}$ acted on the function $f$ and evaluated at $x$ are, respectively,
\begin{equation}
    \label{eq:Pndef}
  \left[ P^{(n)}_{\epsilon,\mu}f\right](x) = \frac{\sum_{j=1}^n F^{(n)}_j}{\sum_{j=1}^n G^{(n)}_j}\quad{\rm and}\quad 
    \left[P_{\epsilon,\mu}f \right](x)= \frac{\sum_{j=1}^n F_j}{\sum_{j=1}^n G_j}.
\end{equation}
The discrete generators $L_{\epsilon,\mu}^{(n)}f$ and $L_{\epsilon,\mu}f$ applied to $f(x)$ are, respectively,
\begin{equation}
    \label{eq:Lndef}
     \left[L^{(n)}_{\epsilon,\mu}f\right](x) =\frac{1}{\epsilon}\left(  \left[ P^{(n)}_{\epsilon,\mu}f\right](x) - f(x)\right)\quad{\rm and}\quad 
    \left[L_{\epsilon,\mu}f\right](x) =\frac{1}{\epsilon}\left( \left[ P_{\epsilon,\mu}f\right](x) - f(x)\right). 
\end{equation}
\begin{theorem}[Discrepancy between the discrete TMDmap generators with discrete and continuous KDEs]
\label{thm:M2}
Under the settings of Theorem \ref{thm:M1}, the discrepancy between  $\left[L^{(n)}_{\epsilon,\mu}f\right](x)$ and $ \left[L_{\epsilon,\mu}f\right](x) $ defined by \eqref{eq:Lndef} is bounded, with probability at least $1-n^{-3}$, by 
\begin{equation}
    \label{eq:thmM2_L}
    \left|\left[L^{(n)}_{\epsilon,\mu}f\right](x) -\left[L_{\epsilon,\mu}f\right](x)  \right| < C\epsilon\alpha(1+o(1))
\end{equation}
where the constants $\alpha$ and $C$ are given, respectively, by
\begin{equation}
\label{eq:thmM2_alpha}
\alpha = \frac{1}{(2\pi)^{d/4}}\sqrt{\frac{\log n}{n\epsilon^{4+d/2}}}
\end{equation}
and
\begin{equation}
    \label{eq:themM2_C}
    C = \frac{10{|f(x)|}}{\rho^{1/2}(x)}.
\end{equation}
\end{theorem}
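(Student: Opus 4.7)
The plan is to decompose
\[
L^{(n)}_{\epsilon,\mu}f(x) - L_{\epsilon,\mu}f(x) \;=\; \epsilon^{-1}\!\left(P^{(n)}_{\epsilon,\mu}f(x) - P_{\epsilon,\mu}f(x)\right)
\]
and reduce the Markov-operator discrepancy to a controlled perturbation provided by Theorem \ref{thm:M1}. Observe that $F_j^{(n)}$ and $G_j^{(n)}$ differ from $F_j$ and $G_j$ only through the denominator $\rho_\epsilon^{(n)}(x_j)$; so setting $\eta_j := (\rho^{(n)}_\epsilon(x_j) - \rho_\epsilon(x_j))/\rho_\epsilon(x_j)$ gives $F_j^{(n)} = F_j/(1+\eta_j)$ and $G_j^{(n)} = G_j/(1+\eta_j)$. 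Applying Theorem \ref{thm:M1} at $x_j$ and dividing by $\rho_\epsilon(x_j) = (\pi\epsilon)^{d/2}\rho(x_j)(1+O(\epsilon))$, one obtains $|\eta_j| \leq 5\epsilon^2\alpha/\rho^{1/2}(x_j)\cdot(1+o(1))$ at each $x_j$ with probability at least $1-n^{-4}$. A union bound over $j=1,\dots,n$ then ensures the bound holds simultaneously for all $x_j$ with probability at least $1- n\cdot n^{-4} = 1-n^{-3}$, matching exactly the probability in the conclusion. Under the scaling \eqref{eq:thmM1_epsn}, $\max_j|\eta_j|\to 0$, so the factors $(1+\eta_j)^{-1}$ are themselves $1+o(1)$.

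Writing $S_G^{(n)}:=\sum_{j=1}^n G_j^{(n)}$ and using the standard ratio identity,
\begin{align*}
    P^{(n)}_{\epsilon,\mu}f(x) - P_{\epsilon,\mu}f(x) = \frac{1}{S_G^{(n)}}\sum_{j=1}^n\bigl[(F_j^{(n)} - F_j) - P_{\epsilon,\mu}f(x)(G_j^{(n)} - G_j)\bigr].
\end{align*}
Substituting $F_j^{(n)}-F_j = -F_j\eta_j/(1+\eta_j)$ and the analogous expression for $G_j$, then applying the triangle inequality termwise, produces
\begin{align*}
    \bigl|P^{(n)}_{\epsilon,\mu}f(x) - P_{\epsilon,\mu}f(x)\bigr| \leq \frac{1+o(1)}{S_G^{(n)}}\sum_{j=1}^n G_j\bigl(|f(x_j)| + |P_{\epsilon,\mu}f(x)|\bigr)|\eta_j|.
\end{align*}
It is worth noting, though not needed here, that exploiting the cancellation $F_j - P_{\epsilon,\mu}f(x)G_j = G_j(f(x_j)-P_{\epsilon,\mu}f(x))$, which is $O(\sqrt{\epsilon})$ on the effective support of the kernel, would actually give an even sharper $O(\epsilon^{3/2}\alpha)$ bound; the termwise triangle inequality is already enough for the claimed $C\epsilon\alpha$ rate.

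Inserting $|\eta_j|\leq 5\epsilon^2\alpha/\rho^{1/2}(x_j)(1+o(1))$, the task reduces to identifying the leading-order behavior of the three kernel-weighted averages $\tfrac{1}{n}S_G^{(n)}$, $\tfrac{1}{n}\sum_j G_j/\rho^{1/2}(x_j)$, and $\tfrac{1}{n}\sum_j G_j|f(x_j)|/\rho^{1/2}(x_j)$. Combining the law of large numbers with Lemma \ref{lem: expansion} applied to $\rho_\epsilon(y)^{-1}$ and the relevant smooth integrands, these limits are $\mu^{1/2}(x)$, $\mu^{1/2}(x)/\rho^{1/2}(x)$, and $|f(x)|\mu^{1/2}(x)/\rho^{1/2}(x)$, respectively, up to $O(\epsilon)$ corrections. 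Together with $P_{\epsilon,\mu}f(x) = f(x) + O(\epsilon)$ (which follows from Theorem \ref{Lthm} together with the LLN), the $|f(x_j)|$ and $|P_{\epsilon,\mu}f(x)|$ contributions each produce $5\epsilon^2\alpha|f(x)|/\rho^{1/2}(x)$, summing to $10\epsilon^2\alpha|f(x)|/\rho^{1/2}(x)(1+o(1))$. Dividing by $\epsilon$ yields the bound with $C = 10|f(x)|/\rho^{1/2}(x)$.

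The main technical obstacle is elevating the LLN limits above to high-probability statements compatible with the $1-n^{-3}$ probability budget already consumed by the KDE union bound. The cleanest route is another Bernstein-type concentration inequality for each of the three kernel-weighted sums (mirroring the proof of Theorem \ref{thm:M1}), with fluctuations of order $o(\epsilon^2\alpha)$ absorbed into the $(1+o(1))$ factor. Verifying that all these small-order corrections, together with the $(1+\eta_j)^{-1}$ expansions and the remainder terms from Lemma \ref{lem: expansion}, are genuinely $o(1)$ under the scaling \eqref{eq:thmM1_epsn} is the most delicate part of the bookkeeping.
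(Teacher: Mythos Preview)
Your proposal is correct and follows essentially the same approach as the paper: both reduce to the Markov-operator difference $\epsilon^{-1}(P^{(n)}-P)$, bound the perturbation in the kernel denominators via the uniform KDE estimate of Theorem~\ref{thm:M1} (yielding the $1-n^{-3}$ probability after a union bound over the $n$ points), and then extract the leading constant $10|f(x)|\rho^{-1/2}(x)$ by replacing kernel-weighted sums with their integral limits through Lemma~\ref{lem: expansion}. The only cosmetic difference is the algebraic form of the ratio identity: you use $P^{(n)}-P = (S_G^{(n)})^{-1}\bigl[(S_F^{(n)}-S_F) - P\,(S_G^{(n)}-S_G)\bigr]$, whereas the paper cross-multiplies to get a double sum $\sum_{j,l}\bigl(F_j(G_l^{(n)}-G_l)+G_l(F_j-F_j^{(n)})\bigr)/\sum_{j,l}G_jG_l^{(n)}$; both unwind to the same two contributions of size $5\epsilon^2\alpha|f(x)|\rho^{-1/2}(x)$. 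Your remark that the concentration of the auxiliary kernel sums must be justified is well taken---the paper handles this step informally by writing the sum-to-integral replacement as an $O(n^{-1/2})$ correction absorbed into $(1+o(1))$.
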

The proof of Theorem \ref{thm:M2} is found in Section \ref{sec:proofThm10}.

\subsubsection{Bound for the discrepancy between $L_{\epsilon,\mu}$ and $\mathcal{L}_{\epsilon,\mu}$}
Let $x_i\in\mathcal{X}(n)$ be any selected point in the point cloud. Our next goal is to estimate the discrepancy between $[L_{\epsilon,\mu}f](x_i)$ and $[\mathcal{L}_{\epsilon,\mu}f](x_i)$ defined, respectively, by
\begin{equation}
    \label{eq:Lemf}
[L_{\epsilon,\mu}f](x_i) = \frac{1}{\epsilon} \left(\frac{\sum_{j=1}^n\frac{k_{\epsilon}(x_i,x_j)\mu^{1/2}(x_j)f(x_j)}{\rho_{\epsilon}(x_j)}}{\sum_{j=1}^n\frac{k_{\epsilon}(x_i,x_j)\mu^{1/2}(x_j)}{\rho_{\epsilon}(x_j)}} - f(x_i)\right) \equiv  
\frac{1}{\epsilon} \left(\frac{\sum_{j=1}^n F_j}{\sum_{j=1}^n G_j} - f(x_i)\right)
\end{equation}
and
\begin{equation}
    \label{eq:mathcalLemf}
[\mathcal{L}_{\epsilon,\mu}f](x_i) = \frac{1}{\epsilon} \left(\frac{\int_{\mathcal{M}}\frac{k_{\epsilon}(x_i,y)\mu^{1/2}(y)f(y)}{\rho_{\epsilon}(y)}dy}{\int_{\mathcal{M}}\frac{k_{\epsilon}(x_i,y)\mu^{1/2}(y)}{\rho_{\epsilon}(y)}dy} - f(x_i)\right) \equiv 
\frac{1}{\epsilon} \left(\frac{\mathbb{E}[F]}{\mathbb{E}[G]} - f(x_i)\right).
\end{equation}
Equations \eqref{eq:Lemf} and \eqref{eq:mathcalLemf} show that 
\begin{equation}
    \label{eq:LP_M3}
     \left[L_{\epsilon,\mu}f\right](x) -\left[\mathcal{L}_{\epsilon,\mu}f\right](x)   = \frac{1}{\epsilon}
     \left( \frac{\sum_{j=1}^n F_j}{\sum_{j=1}^n G_j}  - \frac{m_F}{m_G}\right).
\end{equation}
For brevity, $\mathbb{E}[F]$ and $\mathbb{E}[G]$ are denoted by $m_F$ and $m_G$ respectively. 
\begin{theorem}[Discrepancy between the discrete and continuous TMDmap generators with continuous KDE]
    \label{thm:M3}
    Let $\{x_j\}_{j=1}^n\equiv \mathcal{X}(n)$ be a point cloud of $n$ points sampled independently from the density $\rho$. Let $x_i\in\mathcal{X}(n)$ be any selected point. Let $\epsilon\rightarrow 0$ and $n\rightarrow\infty$ so that  
    \begin{equation}
        \label{eq:thmM3_en}        \lim_{\substack{n\rightarrow\infty\\\epsilon\rightarrow 0}}n\epsilon^{1+d/2}\log n = \infty.
    \end{equation}
    Let the function $f$ be smooth and bounded.
    Then the discrepancy between  $\left[L_{\epsilon,\mu}f\right](x_i)$ and $ \left[\mathcal{L}_{\epsilon,\mu}f\right](x_i) $ defined by \eqref{eq:Lemf} and \eqref{eq:mathcalLemf} respectively is bounded, with probability at least $1-n^{-3}$, by 
\begin{equation}
    \label{eq:thmM3_L}
    \left|\left[L_{\epsilon,\mu}f\right](x_i) -\left[\mathcal{L}_{\epsilon,\mu}f\right](x_i)  \right| < C\alpha\epsilon^{3/2}
\end{equation}
where the constants $\alpha$ and $C$ are given, respectively, by
\begin{equation}
\label{eq:thmM3_alpha}
\alpha = \frac{1}{(2\pi)^{d/4}}\sqrt{\frac{\log n}{n\epsilon^{4+d/2}}}
\end{equation}
and
\begin{equation}
    \label{eq:themM3_C}
    C = \frac{2\|\nabla f(x_i)\|}{\rho^{1/2}(x_i)}.
\end{equation}
\end{theorem}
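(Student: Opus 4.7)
\textbf{Proof plan for Theorem \ref{thm:M3}.} The starting identity is \eqref{eq:LP_M3}. The standard "ratio of sums" trick rewrites this as
\begin{equation*}
[L_{\epsilon,\mu}f](x_i) - [\mathcal{L}_{\epsilon,\mu}f](x_i) = \frac{1}{\epsilon}\cdot\frac{\tfrac{1}{n}\sum_{j=1}^n S_j}{\tfrac{1}{n}\sum_{j=1}^n G_j},\qquad S_j := F_j - \frac{m_F}{m_G}G_j = G_j\!\left(f(x_j) - \frac{m_F}{m_G}\right).
\end{equation*}
By construction $\mathbb{E}[S_j]=0$ for $j\neq i$, so the strategy is: (i) bound the denominator below by the leading-order value of $m_G$ with high probability, and (ii) apply Bernstein's inequality \eqref{eq:Bernstein0} to $\sum_{j\neq i} S_j$, handling the single $j=i$ summand by a direct estimate.

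\textbf{Denominator.} A Bernstein-type concentration of $\tfrac{1}{n}\sum G_j$ around $m_G$ (analogous to Theorem \ref{thm:M1}, since $G_j$ differs from the KDE summand only by a slowly-varying smooth factor $\mu^{1/2}/\rho_\epsilon$) shows that with probability at least $1-n^{-4}$,
\begin{equation*}
\tfrac{1}{n}\textstyle\sum_j G_j = m_G(1+o(1)).
\end{equation*}
Lemma \ref{lem: expansion} applied to $\rho_\epsilon$ and then to $\int k_\epsilon(x_i,y)\mu^{1/2}(y)\,dy$ gives $m_G = \mu^{1/2}(x_i) + O(\epsilon)$, and similarly $m_F/m_G = f(x_i) + O(\epsilon)$.

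\textbf{Numerator.} Since $\mathbb{E}[S_j]=0$, $\mathrm{Var}(S_j) = \mathbb{E}[S_j^2]$. Using $\rho_\epsilon(y)^2 = (\pi\epsilon)^d\rho(y)^2(1+O(\epsilon))$ and the identity $k_\epsilon(x,y)^2 = k_{\epsilon/2}(x,y)$,
\begin{equation*}
\mathbb{E}[S_j^2] = \int_\mathcal{M}\frac{k_{\epsilon/2}(x_i,y)\mu(y)}{(\pi\epsilon)^d\rho(y)}\!\left(f(y)-\tfrac{m_F}{m_G}\right)^{\!2}dy\,(1+O(\epsilon)).
\end{equation*}
Expanding $f(y) - m_F/m_G = \nabla f(x_i)\cdot(y-x_i) + O(\|y-x_i\|^2) + O(\epsilon)$ in normal coordinates at $x_i$, the leading term on integration is the Gaussian second moment $\int e^{-2\|u\|^2/\epsilon}(\nabla f(x_i)\cdot u)^2\,du = \tfrac{\epsilon}{4}(\pi\epsilon/2)^{d/2}\|\nabla f(x_i)\|^2$, so
\begin{equation*}
\mathbb{E}[S_j^2] = \frac{\mu(x_i)\,\|\nabla f(x_i)\|^2}{2^{d/2+2}\pi^{d/2}\rho(x_i)}\,\epsilon^{1-d/2}\,(1+o(1)).
\end{equation*}
A crude bound $|S_j|\le M=O(\epsilon^{-d/2})$ is used for the Bernstein cubic-denominator term; under the rate condition \eqref{eq:thmM3_en}, the sub-Gaussian branch $\sqrt{2\mathrm{Var}\log n/n}$ of \eqref{eq:Bernstein0} dominates, giving with probability at least $1-2n^{-3}$:
\begin{equation*}
\left|\tfrac{1}{n}\textstyle\sum_{j\neq i} S_j\right| \le \frac{\|\nabla f(x_i)\|\mu^{1/2}(x_i)}{\sqrt{2}\,\rho^{1/2}(x_i)}\cdot\frac{1}{(2\pi)^{d/4}}\sqrt{\frac{\log n}{n\epsilon^{d/2-1}}}(1+o(1)).
\end{equation*}
The single $S_i = G_i(f(x_i)-m_F/m_G) = O(\epsilon^{1-d/2})$ contributes $O(\epsilon^{1-d/2}/n)$, which is absorbed into the $o(1)$ using \eqref{eq:thmM3_en}.

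\textbf{Combining.} Dividing by the denominator $m_G(1+o(1)) = \mu^{1/2}(x_i)(1+o(1))$ and then by $\epsilon$ cancels the $\mu^{1/2}(x_i)$ factor and produces the advertised $\rho^{-1/2}(x_i)$, with the $\epsilon^{-1}$ and $\epsilon^{1-d/2}$ combining to match $\alpha\epsilon^{3/2}$. The numerical constant from Bernstein ($\sqrt{2}$) together with the $(2\pi)^{d/4}$ rescaling absorbed into $\alpha$ and with room in the $(1+o(1))$ yields the stated $C = 2\|\nabla f(x_i)\|/\rho^{1/2}(x_i)$. The main obstacle is the sharp variance computation: one must track the cancellation of $\mu^{1/2}(x_i)$ and the exact Gaussian moment to obtain the prefactor $C$ (rather than a non-explicit constant), and must verify that replacing $m_F/m_G$ by $f(x_i)$ and the higher-order Taylor terms in $f(y)-f(x_i)$ contribute only to $o(1)$ in the leading coefficient.
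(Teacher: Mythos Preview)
Your proof is correct and reaches the same variance asymptotics and final constant as the paper, but it takes a genuinely different route. You separate the ratio into a numerator $\tfrac{1}{n}\sum_j S_j$ and a denominator $\tfrac{1}{n}\sum_j G_j$, controlling the denominator by a separate concentration step and then applying Bernstein to $\sum_{j\neq i}S_j$. The paper instead \emph{clears denominators}: it multiplies the inequality $\tfrac{\sum F_j}{\sum G_j}-\tfrac{m_F}{m_G}\ge a\epsilon$ through by the positive quantity $m_G\sum_j G_j$ and absorbs the threshold into new centered variables $Y_j=F_jm_G-(m_F+a\epsilon m_G)G_j+a\epsilon m_G^2$ (and $Z_j$ for the lower tail), so that the event becomes $\sum_{j\ne i}Y_j\ge na\epsilon m_G^2 - Y_i$ and a \emph{single} Bernstein application per tail suffices. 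This eliminates your auxiliary concentration for the denominator. The variance computations coincide: since $Y_j=m_G S_j - a\epsilon m_G(G_j-m_G)$, one has ${\sf Var}(Y)=m_G^2{\sf Var}(S)(1+o(1))$, and the paper's Lemma computing ${\sf Var}(Y)=\tfrac{\epsilon}{4}(2\pi\epsilon)^{-d/2}\tfrac{\mu^2}{\rho}\|\nabla f\|^2(1+o(1))$ matches your $\mathbb{E}[S_j^2]$ after dividing by $m_G^2=\mu(x_i)(1+O(\epsilon))$. What your approach buys is modularity (the denominator bound is reusable) at the cost of an extra union-bound term in the failure probability; what the paper's approach buys is a single clean Bernstein step with no separate control of $\sum G_j$.
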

The proof of Theorem \ref{thm:M3} is detailed in Section \ref{sec:proofThm11}.

\subsubsection{Bound for the variance error $L^{(n)}_{\epsilon,\mu}f - \mathcal{L}_{\epsilon,\mu}f$}
Theorems \ref{thm:M2} and \ref{thm:M3} allow us to estimate the variance error for TMDmap. The result is presented in Theorem \ref{thm:M4}.
\begin{theorem}
    \label{thm:M4}
    Let $\{x_j\}^n_{j=1} \equiv \mathcal{X}(n)$ be a point cloud of $n$ points sampled independently from
the density $\rho$. Let $x_i\in\mathcal{X}(n)$ be any selected point. Let $\epsilon\rightarrow 0$ and $n\rightarrow\infty$ so that
\begin{equation}
    \label{eq:thmM4_en}
    \lim_{\substack{n\rightarrow\infty\\\epsilon\rightarrow 0}}\frac{n\epsilon^{2+d/2}}{\log n} = \infty.
\end{equation}
 Let the density $\rho$ be bounded from above and away from zero, $0<\rho_{\min}\le\rho\le\rho_{\max}$, and the function $f$ be smooth and bounded.
Then the discrepancy $\left[L^{(n)}_{\epsilon,\mu}f\right](x_i)-\left[\mathcal{L}_{\epsilon,\mu}f\right](x_i) $ between the discrete TMDmap generator applied to $f$ and its continuous counterpart is bounded, with probability at least $1-2n^{-3}$, by 
\begin{equation}
    \label{eq:thmM4_L}
    \left|\left[L^{(n)}_{\epsilon,\mu}f\right](x_i) -\left[\mathcal{L}_{\epsilon,\mu}f\right](x_i)  \right| < \frac{\alpha\epsilon}{\rho^{1/2}(x_i)}\left(2\|\nabla f(x_i)\|\epsilon^{1/2} + 11{|f(x_i)|}\right)
\end{equation}
for large enough $n$ and small enough $\epsilon$, where the constant $\alpha$ is given by
\begin{equation}
\label{eq:thmM4_alpha}
\alpha = \frac{1}{(2\pi)^{d/4}}\sqrt{\frac{\log n}{n\epsilon^{4+d/2}}}.
\end{equation}
\end{theorem}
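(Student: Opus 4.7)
The plan is to combine Theorems \ref{thm:M2} and \ref{thm:M3} via the triangle inequality and a union bound. The continuous TMDmap generator $[L_{\epsilon,\mu}f](x_i)$, built with the exact KDE $\rho_\epsilon$ but summed over the sample points, acts as a bridge between the fully discrete $[L^{(n)}_{\epsilon,\mu}f](x_i)$ and its LLN limit $[\mathcal{L}_{\epsilon,\mu}f](x_i)$. Adding and subtracting this bridge term gives
\[
\bigl|[L^{(n)}_{\epsilon,\mu}f](x_i) - [\mathcal{L}_{\epsilon,\mu}f](x_i)\bigr|
\;\le\;
\bigl|[L^{(n)}_{\epsilon,\mu}f](x_i) - [L_{\epsilon,\mu}f](x_i)\bigr|
+ \bigl|[L_{\epsilon,\mu}f](x_i) - [\mathcal{L}_{\epsilon,\mu}f](x_i)\bigr|.
\]

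For the first term, I would invoke Theorem \ref{thm:M2}, which yields the bound $10|f(x_i)|\epsilon\alpha(1+o(1))/\rho^{1/2}(x_i)$ on an event $E_1$ of probability at least $1-n^{-3}$; the hypothesis \eqref{eq:thmM4_en} coincides with \eqref{eq:thmM1_epsn}, so Theorem \ref{thm:M2} applies. For the second term, I would apply Theorem \ref{thm:M3}, which yields the bound $2\|\nabla f(x_i)\|\alpha\epsilon^{3/2}/\rho^{1/2}(x_i)$ on an event $E_2$ of probability at least $1-n^{-3}$; note that \eqref{eq:thmM4_en} implies \eqref{eq:thmM3_en}, so this invocation is legitimate. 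A union bound gives $\mathbb{P}(E_1 \cap E_2) \ge 1 - 2n^{-3}$.

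On $E_1 \cap E_2$, factoring out $\alpha\epsilon/\rho^{1/2}(x_i)$ produces
\[
\bigl|[L^{(n)}_{\epsilon,\mu}f](x_i) - [\mathcal{L}_{\epsilon,\mu}f](x_i)\bigr|
\;<\;
\frac{\alpha\epsilon}{\rho^{1/2}(x_i)}
\Bigl( 10|f(x_i)|(1+o(1)) + 2\|\nabla f(x_i)\|\epsilon^{1/2} \Bigr).
\]
Since $(1+o(1)) \to 1$ as $n\to\infty$ and $\epsilon\to 0$, for sufficiently large $n$ and small $\epsilon$ we have $10(1+o(1)) \le 11$, which absorbs the $o(1)$ correction and yields exactly \eqref{eq:thmM4_L}. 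The hypotheses on $\rho$ (bounded above and away from zero) and on $f$ (smooth and bounded) are precisely what the two invoked theorems already require, so no additional assumptions must be introduced.

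The argument is essentially a repackaging, so there is no serious obstacle. The only point worth being careful about is confirming that the probability-$1-n^{-3}$ events in Theorems \ref{thm:M2} and \ref{thm:M3} are both established at the same query point $x_i$ and that the stronger bandwidth condition \eqref{eq:thmM4_en} subsumes both prior conditions; once these are verified, the triangle inequality and a union bound finish the proof, with the only asymptotic cleanup being the absorption of the vanishing $o(1)$ term from Theorem \ref{thm:M2} into the constant $11$.
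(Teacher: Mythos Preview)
Your proposal is correct and follows essentially the same approach as the paper's proof: triangle inequality with $[L_{\epsilon,\mu}f](x_i)$ as the bridge term, invocation of Theorems \ref{thm:M2} and \ref{thm:M3}, verification that \eqref{eq:thmM4_en} subsumes both bandwidth conditions, a union bound to get probability $1-2n^{-3}$, and absorption of $10(1+o(1))$ into the constant $11$.
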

\begin{proof}
    The result readily follows from Theorems \ref{thm:M2} and \ref{thm:M3}. The definitions of $\alpha$ in these Theorems match. Condition \eqref{eq:thmM1_epsn} on $n$ and $\epsilon$ from Theorem \ref{thm:M2} implies condition \eqref{eq:thmM3_en} on $n$ and $\epsilon$ from Theorem \ref{thm:M3}. Condition \eqref{eq:thmM4_en} on $n$ and $\epsilon$ coincides with \eqref{eq:thmM1_epsn}. Therefore, the conditions of both Theorems   \ref{thm:M2} and \ref{thm:M3} hold. Furthermore, for small enough $\epsilon$ and large enough $n$, the term $10(1+o(1))$ in Theorem \eqref{thm:M2} is less than 11. Inequalities \eqref{eq:thmM2_L} and \eqref{eq:thmM3_L} fail to hold with probabilities at most $n^{-3}$. Hence, with probability at least $1-2n^{-3}$, we have
    \begin{align}
       &\thinspace \left|\left[L^{(n)}_{\epsilon,\mu}f\right](x_i) -\left[\mathcal{L}_{\epsilon,\mu}f\right](x_i)  \right| \notag\\
       \le &\thinspace \left|\left[L^{(n)}_{\epsilon,\mu}f\right](x_i) -\left[{L}_{\epsilon,\mu}f\right](x_i)  \right| + \left|\left[L_{\epsilon,\mu}f\right](x_i) -\left[\mathcal{L}_{\epsilon,\mu}f\right](x_i)  \right| \notag \\
       < &\thinspace
        \frac{\alpha\epsilon}{\rho^{1/2}(x_i)}\left(\|\nabla f(x_i)\|\epsilon^{1/2} + 11{|f(x_i)|}\right)
    \end{align}
    as desired.
\end{proof}
Theorem \ref{thm:M4} allows us to make several observations.
\begin{itemize}
\item The bound of the pointwise variance error \eqref{eq:thmM4_L} is inversely proportional to $\rho^{1/2}$. Therefore, to minimize this prefactor, one needs to choose the uniform sampling density $\rho$.
\item If the function $f$ is the committor, it is smooth  and assumes values between 0 and 1. However, if the temperature $\beta^{-1}$ is small, the committor may have a large gradient in the transition region. Therefore, at a finite bandwidth parameter $\epsilon$, either term in the parentheses in the right-hand side of \eqref{eq:thmM4_L} can be larger.
\item The KDE established in Theorem \ref{thm:M1} is valid for any sampling density $\rho$. In contrast, the error bound for the variance error in the generator $\mathcal{L}$ applied to a smooth function $f$ is derived under the assumption that the sampling density $\rho$ is bounded away from zero: $0<\rho_{\min}\le\rho\le \rho_{\max}$. While the upper bound is always finite in real-life applications, the lower bound $\rho_{\min}$ is often zero. Enhanced sampling algorithms such as metadynamics or temperature acceleration generate point clouds supported within a compact region in many applications of interest. This issue can be readily resolved as follows. We use all sampled points to obtain the KDE $\rho_{\epsilon}^{(n)}$. Furthermore, we construct the discrete generator $L_{\epsilon,\mu}^{(n)}$ using all points. Then, we select a compact region $\Omega\subset\mathcal{M}$ in which the sampling density is above a desired threshold $\rho_{\min}$ and use the computed solution only in this region. The error bound obtained in Theorem \ref{thm:M4}  applies in this region. 

\item In order for $\alpha$ to be $O(1)$, the number of points in the point cloud, $n$, must scale so that
\begin{equation}
    \label{eq:neps_scaling}
    \frac{(2\pi)^{d/2}n}{\log n} = \epsilon^{-4-d/2}.
\end{equation}
This means that choosing larger $\epsilon$ can significantly reduce the required size of the point cloud to guarantee the same bound for the variance error. For example, if one chooses $n=10^4$ points, a value of $\epsilon$ predicted by \eqref{eq:neps_scaling} is about $0.17$ -- see Fig. \ref{fig:neps_scale}. In applications, $\epsilon$ can be an order of magnitude smaller \cite{evans2021computing}.

\end{itemize}

\begin{figure}[h]
    \centering
    \includegraphics[width=0.45\textwidth]{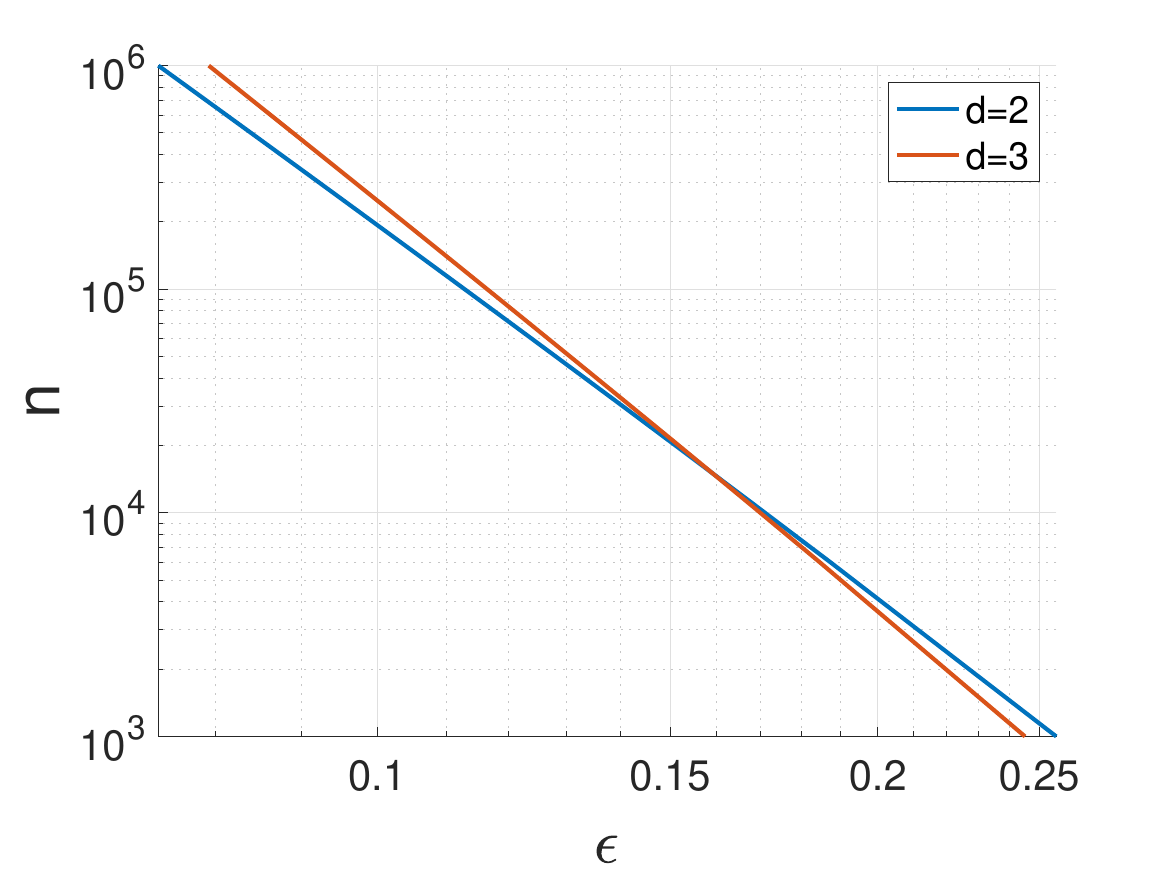}
    \caption{The relationship between the number of sample points $n$ on the $d$-dimensional manifold $\mathcal{M}$ and the bandwidth parameter $\epsilon$ at which the parameter $\alpha$ in Theorems \ref{thm:M2}, \ref{thm:M3}, and \ref{thm:M4} is 1.} 
    \label{fig:neps_scale}
\end{figure}

\subsection{Typical error in the kernel density estimate}
The error bound obtained in Theorem \ref{thm:M4} 
is based on Bernstein's general inequality. It 
is significantly larger than the typical error observed in our numerical experiments. In this section, we find the typical variance error in estimates by computing means and standard deviations of appropriate random variables. { Unfortunately, the scaling for $n$ and $\epsilon$ for the typical error derived in Theorem \ref{thm:M5} is similar to the one in \eqref{eq:neps_scaling}. Therefore, the reason for a good performance of TMDmap with uniform $\rho$ achieved via $\delta$-net is different.  Some insights on this issue will be given in Section \ref{sec:trigrid}.}

\begin{theorem}
    \label{thm:M5}
    Let $\mathcal{X}(n)=\left\{x_j\right\}_{j=1}^n$ be a point cloud sampled from the density $\rho$. Let $x_i\in\mathcal{X}(n)$ be an arbitrarily selected point.
    The bias and the variance of the error  in the kernel density estimate at $x_i$ are
    \begin{equation}
        \label{eq:exp_err_kde}
        \mathbb{E}\left[\rho_{\epsilon}^{(n)}(x_i) - \rho_{\epsilon}(x_i)\right] = \frac{1}{n} \left(1-(\pi\epsilon)^{d/2}\left[\rho(x_i)+O(\epsilon)\right]\right),
    \end{equation}
    \begin{equation}
        \label{eq:var_err_kde}
        {\sf Var}\left(\rho_{\epsilon}^{(n)}(x_i) - \rho_{\epsilon}(x_i)\right) = \frac{1}{n}\left(\frac{\pi\epsilon}{2}\right)^{d/2} \left(\rho(x_i) + O(\epsilon^{\min\{1,d/2\}}) + O(n^{-1})\right).
    \end{equation}
\end{theorem}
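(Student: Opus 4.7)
The plan is to condition on the selected point $x_i$ and exploit the fact that, out of the $n$ summands in $\rho_\epsilon^{(n)}(x_i)=n^{-1}\sum_j k_\epsilon(x_i,x_j)$, the self-term $j=i$ contributes the deterministic value $k_\epsilon(x_i,x_i)=1$, while the remaining $n-1$ summands $k_\epsilon(x_i,x_j)$ are i.i.d.\ random variables (conditionally on $x_i$). Writing
\begin{equation*}
\rho_\epsilon^{(n)}(x_i) = \frac{1}{n} + \frac{1}{n}\sum_{j\neq i} k_\epsilon(x_i,x_j),
\end{equation*}
both the bias and the variance decouple into one-sample computations involving $k_\epsilon(x_i,X)$ with $X\sim\rho$.

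For the bias, linearity of expectation gives $\mathbb{E}[\rho_\epsilon^{(n)}(x_i)\mid x_i]=n^{-1}+\tfrac{n-1}{n}\rho_\epsilon(x_i)$. Subtracting $\rho_\epsilon(x_i)$ produces $n^{-1}(1-\rho_\epsilon(x_i))$, and substituting the leading-order expansion $\rho_\epsilon(x_i)=(\pi\epsilon)^{d/2}[\rho(x_i)+O(\epsilon)]$ that follows from Lemma \ref{lem: expansion} applied with $f\equiv\rho$ yields the desired formula \eqref{eq:exp_err_kde}. The surviving factor $n^{-1}$ is precisely the residue of the self-interaction $k_\epsilon(x_i,x_i)=1$, which would be absent in a leave-one-out variant of the KDE and explains why the bias does not vanish at leading order in $\epsilon$.

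For the variance, since $\rho_\epsilon(x_i)$ is deterministic given $x_i$ and the $n-1$ summands are i.i.d., one has
\begin{equation*}
{\sf Var}\bigl(\rho_\epsilon^{(n)}(x_i)-\rho_\epsilon(x_i)\bigr) = \frac{n-1}{n^2}\,{\sf Var}\bigl(k_\epsilon(x_i,X)\bigr).
\end{equation*}
The central device is a self-convolution identity: $k_\epsilon(x_i,y)^2 = \exp(-2\|x_i-y\|^2/\epsilon) = k_{\epsilon/2}(x_i,y)$, so applying Lemma \ref{lem: expansion} with bandwidth $\epsilon/2$ to $\rho$ gives
\begin{equation*}
\mathbb{E}\bigl[k_\epsilon(x_i,X)^2\bigr] = (\pi\epsilon/2)^{d/2}\bigl[\rho(x_i)+O(\epsilon)\bigr],
\end{equation*}
while $\bigl(\mathbb{E}[k_\epsilon(x_i,X)]\bigr)^2 = \rho_\epsilon(x_i)^2 = (\pi\epsilon)^d\rho(x_i)^2+O(\epsilon^{d+1})$. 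Subtracting, the first term dominates, so ${\sf Var}(k_\epsilon(x_i,X)) = (\pi\epsilon/2)^{d/2}\rho(x_i) + O(\epsilon^{d/2+1}) + O(\epsilon^d)$. Multiplying by $(n-1)/n^2 = n^{-1}(1+O(n^{-1}))$ and factoring out $n^{-1}(\pi\epsilon/2)^{d/2}$, the two correction orders $O(\epsilon)$ from the expansion and $O(\epsilon^{d/2})$ from the $(\pi\epsilon)^d$ tail combine into $O(\epsilon^{\min\{1,d/2\}})$, delivering \eqref{eq:var_err_kde}.

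The computation is essentially linear and presents no genuine obstacles; the only non-obvious ingredient is the self-convolution identity $k_\epsilon^2=k_{\epsilon/2}$, which lets us avoid crude $L^\infty$ bounds on $\mathbb{E}[k_\epsilon^2]$ and instead obtain the sharp leading-order prefactor $(\pi\epsilon/2)^{d/2}\rho(x_i)$ by reusing Lemma \ref{lem: expansion}. The remaining care is in tracking the $O(n^{-1})$ and $O(\epsilon)$ remainders consistently to justify the particular $\min\{1,d/2\}$ in the error exponent, which arises because the dimension-dependent term $(\pi\epsilon)^d\rho(x_i)^2$ becomes comparable to the leading $(\pi\epsilon/2)^{d/2}\rho(x_i)$ correction only when $d/2<1$.
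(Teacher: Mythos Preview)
Your proposal is correct and follows essentially the same approach as the paper: both condition on $x_i$, separate the deterministic self-term $k_\epsilon(x_i,x_i)=1$ from the $n-1$ i.i.d.\ terms, use Lemma~\ref{lem: expansion} for $\rho_\epsilon(x_i)$, and invoke the identity $k_\epsilon^2=k_{\epsilon/2}$ to evaluate the second moment for the variance. The paper phrases this via the centered variables $X_j=n^{-1}(k_\epsilon(x_i,x_j)-\rho_\epsilon(x_i))$ introduced in the proofs of Lemmas~\ref{lemma:M1}--\ref{lemma:M2}, but this is only a cosmetic difference.
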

\begin{proof}
    We will use expressions obtained in the proofs of Lemmas \ref{lemma:M1} and \ref{lemma:M2} in Section \ref{subsec: variance error proofs} below. Recall that the error in the KDE is given by 
    \begin{equation}
    \label{eq:rhoediff_M5}
\rho^{(n)}_{\epsilon}(x_i) - \rho_{\epsilon}(x_i) = 
\frac{1}{n}\sum_{j=1}^n\left(k_{\epsilon}(x_i,x_j) - \rho_{\epsilon}(x_i)\right) =: X_i +\sum_{j\neq i} X_j,
\end{equation}
where $X_j$, $j\neq i$, are independent random variables with mean zero and variance
\begin{equation}
    \label{eq:VarX_j_M5}
    {\sf Var}(X_j) = \frac{1}{n^2}\left[ \left(\frac{\pi\epsilon}{2}\right)^{d/2}\left(\rho(x_i) + O(\epsilon^{\min\{1,d/2\}}\right)\right].
\end{equation}
In contrast, $X_i$ is a deterministic number
\begin{equation}
    \label{eq:X_i_M5}
        X_i = \frac{1}{n}\left(k_{\epsilon}(x_i,x_i)-\rho_{\epsilon}(x_i)\right) = \frac{1}{n} \left(1-(\pi\epsilon)^{d/2}\left[\rho(x_i)+O(\epsilon)\right]\right).
\end{equation}
Therefore,
\begin{align}
\label{eq:exp_err_kde_M5}
    \mathbb{E}\left[\rho_{\epsilon}^{(n)}(x_i) - \rho_{\epsilon}(x_i)\right] = X_i +(n-1)\mathbb{E}[X_j] = \frac{1}{n} \left(1-(\pi\epsilon)^{d/2}\left[\rho(x_i)+O(\epsilon)\right]\right),
\end{align}
and
\begin{align}
        {\sf Var}\left(\rho_{\epsilon}^{(n)}(x_i) - \rho_{\epsilon}(x_i)\right) &= (n-1){\sf Var}(X_j) \notag \\
       & = \frac{1}{n}\left(\frac{\pi\epsilon}{2}\right)^{d/2} \left(\rho(x_i) + O(\epsilon^{\min\{1,d/2\}}) + O(n^{-1})\right).\label{eq:var_err_kde_M5}
\end{align}
Note that estimates \label{eq:exp_err_kde} and \eqref{eq:var_err_kde} do not assume any relationship between the rates at which $\epsilon\rightarrow 0$ and $n\rightarrow\infty$.
\end{proof}

The standard deviation of the error of the KDE \eqref{eq:var_err_kde} is
\begin{equation}
    \label{eq:kde_std}
    \sqrt{{\sf Var}\left(\rho_{\epsilon}^{(n)}(x_i) - \rho_{\epsilon}(x_i)\right)} = \frac{1}{n^{1/2}}\left(\frac{\pi\epsilon}{2}\right)^{d/4} \left(\rho^{1/2}(x_i) + o(1)\right).
\end{equation}
Equating it with the bound \eqref{eq:thmM1} for the KDE in Theorem \ref{thm:M1} and setting $\alpha =1$ we obtain
\begin{equation}
    \label{eq:comp_kde_scaling}
    \frac{1}{n^{1/2}}\left(\frac{\pi\epsilon}{2}\right)^{d/4} \left(\rho^{1/2}(x_i) + o(1)\right) = 3(\pi\epsilon)^{d/2}\rho^{1/2}(x_i)\epsilon^2.
\end{equation}
Therefore, to make the typical error as small as the bound in Theorem \ref{thm:M1} one needs to choose 
\begin{equation}
    \label{eq:n_scaling_new}
    n = \frac{(\pi\epsilon)^{d/2}\epsilon^4}{3\cdot2^{d/2}}.
\end{equation}
This scaling for $n$ is just a minor improvement over \eqref{eq:neps_scaling}. 

\subsection{Kernel density estimators for a regular triangular grid}
\label{sec:trigrid}
It is evident from the proof of Theorem \ref{thm:M5} that the inclusion of the point $x_i$ in the kernel density estimator at $x_i$  makes it biased. Therefore, it is reasonable to ask, should we remove the point $x_i$ from the kernel density estimator at $x_i$?
On one hand, the removal of $x_i$ makes the estimator unbiased. On the other hand, thinking of $\delta$-nets, the removal of $x_i$ creates a ``hole" around it making the point cloud uniform to a lesser extent in the neighborhood of $x_i$. 

To probe this issue quantitatively, we consider the following example. Let $\mathcal{X}(n)$ be a subset of an equilateral triangular grid with step $\delta=n_r^{-1}$ shaped as a right hexagon of radius 1 shown in Fig. \ref{fig:hex} (Left). The relationship between $n$ and $\delta$ is
\begin{equation}
    \label{eq:hex1}
    n = 1 + 6\sum_{k=1}^{n_r} k = 3n_r^2+3n_r + 1 = 3\delta^{-2}+3\delta^{-1} + 1.
\end{equation}
This hexagon is an instance of delta-net with parameter $\delta$, an ideal delta-net.
The area of the hexagon is $\tfrac{1}{2}3\sqrt{3}$ and the density is uniform.  Hence, by \eqref{eq:rhoeps}, 
\begin{equation}
    \label{eq:hex2}
    \rho_{\epsilon} = (\pi\epsilon)^{d/2}\frac{2}{3\sqrt{3}}
\end{equation}
if the integration were done over $\mathbb{R}^2$.
Let us calculate the KDE at a point $x_i\in\mathcal{X}(n)$ lying at a distance greater or equal to $3\sqrt{\epsilon}$ from the boundary of the hexagon with and without the removal of $x_i$ from the estimator. The minimal distance of $3\sqrt{\epsilon}$ from the boundary is motivated by the commonly used sparsification in diffusion map algorithms.
The KDE $\rho_{\epsilon}^{(n)}(x_i)$ is
\begin{equation}
    \label{eq:hex3} 
    \rho_{\epsilon}^{(n)}(x_i) = \begin{cases}\frac{1}{n}\left(1 + 6\sum_{k=1}^{\lfloor \frac{3\sqrt{\epsilon}}{\delta}\rfloor}\exp\left(-\frac{k^2\delta^2}{\epsilon}\right)\right),&\text{with $x_i$}\\ \frac{6}{n}\sum_{k=1}^{\lfloor \frac{3\sqrt{\epsilon}}{\delta}\rfloor}\exp\left(-\frac{k^2\delta^2}{\epsilon}\right),&\text{without $x_i$}.
    \end{cases}
\end{equation}
\begin{figure}[h]
    \centering
    \includegraphics[width=0.95\textwidth]{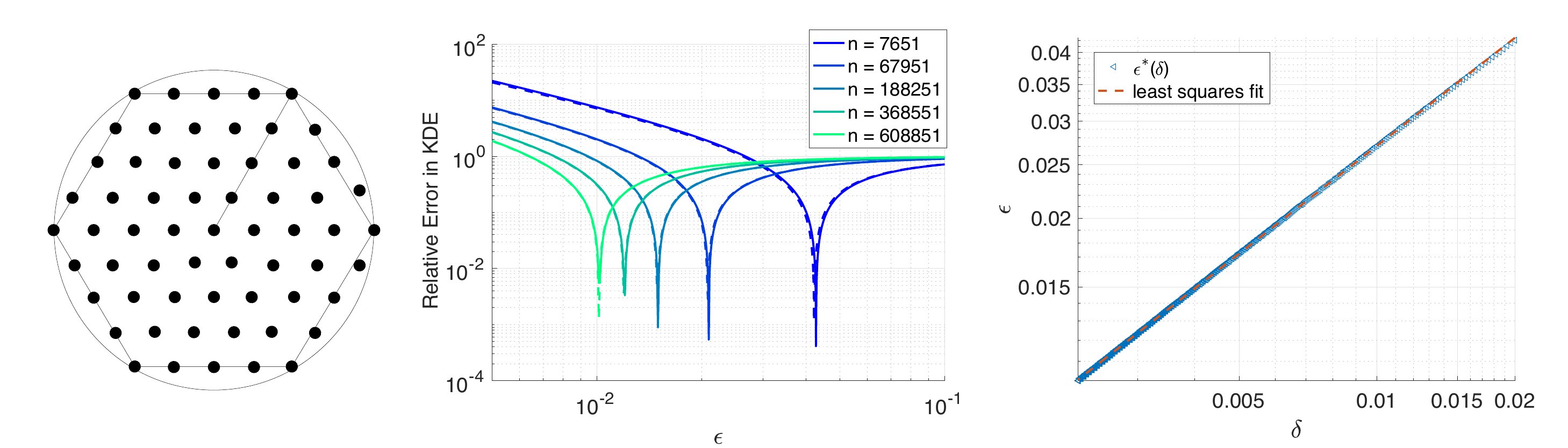}
    \caption{Left: A point cloud is a hexagon-shaped regular equilateral triangular grid. The radius of the circumcircle is one. Middle: Plots of biased (solid) and unbiased (dashed) kernel density estimates (KDEs) \eqref{eq:hex3} for $n_r = 50$, $150$, $250$, $350$, and $450$. The corresponding values of $n$ calculated using \eqref{eq:hex1} are shown in the legend. Right: The graph of the optimal value of $\epsilon$ that minimizes the relative error in $\rho_{\epsilon}$ versus $\delta =n_r$ and the least squares fit for it. } 
    \label{fig:hex}
\end{figure}
The plots of the relative error in the KDE given by \eqref{eq:hex3} with respect to $\rho_{\epsilon}$ given by \eqref{eq:hex2},
\begin{equation}
    \label{eq:rerr_kde}
    E_{rel} = \frac{\left|\rho_{\epsilon}^{(n)} - \rho_{\epsilon}\right|}{\rho_{\epsilon}}
\end{equation}
for the biased (with $x_i$, solid curves) and unbiased (without $x_i$, dashed curves) estimators  for $n_r = 50$, $150$, $250$, $350$, and $450$ are displayed in Fig. \ref{fig:hex} (Middle). The corresponding values of $n$ are given by \eqref{eq:hex1} and are shown in the legend. 

Inspection of these graphs allows us to make the following observations.
\begin{itemize}
    \item First, the relative errors for the biased (solid) and unbiased (dashed) are close to each other. A suboptimal choice of $\epsilon$ affects the relative error much stronger than the choice of the biased versus unbiased estimator. Unsurprisingly, at larger $\epsilon$, the biased estimator gives a slightly smaller error. On the other hand, at smaller $\epsilon$, the unbiased one is slightly more accurate. Therefore, in practice, the choice of a biased or unbiased estimator is not essential.
    \item These graphs show that the minimum of the relative error is achieved at a certain relationship between $\delta =n_r^{-1}$ and $\epsilon$. The plot in Fig. \ref{fig:hex} (Right) shows the optimal $\epsilon$ for each $\delta$. The least squares fit to a power law (the red dashed line in Fig. \ref{fig:hex} (Right))  yields the following relationship between $\delta$ and $\epsilon$:
\begin{equation}
\label{eq:hex4}
\epsilon^{\ast} = 0.5425\cdot\delta^{0.6509}.
\end{equation}
Relationship \eqref{eq:hex4} can be used as a guideline for choosing $\epsilon$ given the parameter $\delta$ of a $\delta$-net.
\item 
Equations \eqref{eq:hex4} and \eqref{eq:hex1} imply the following scaling for $\epsilon$ and $n$:
\begin{equation}
    \label{eq:hex5}
    n = 0.8829\epsilon^{-1.3018}.
\end{equation}
This scaling is much better than the one in \eqref{eq:neps_scaling}.
\end{itemize}

\begin{remark}
    In practice, $\delta$-nets are obtained by subsampling point clouds and do not have points at a distance closer than $\delta$. Such a subsampling breaks the i.i.d. hypothesis on which the KDE error in Theorem \ref{thm:M1} relies. The lack of the i.i.d. hypothesis requires the development of a different approach for the derivation of the variance error. This problem will be addressed in future work.  
\end{remark}

\subsection{Total error model}
\label{subsec: complete error}
Theorems \ref{thm: bias error} and \ref{thm:M4}  imply the total error bound given in Theorem \ref{thm:main} placed in Section \ref{sec: Intro}.  
Theorem \ref{thm:main} implies the following modes of convergence given the limits of $n$ and $\epsilon$: 
\begin{enumerate}
    \item To obtain almost sure convergence using the Borel-Cantelli lemma as $n \to \infty$ and  $\epsilon \to 0$ it suffices to take  
    \begin{align}
     \lim_{\substack{n\rightarrow\infty\\\epsilon\rightarrow 0}}\frac{n\epsilon^{2+d/2}}{\log n} = \infty.       
    \end{align}
    This would leave the variance error converging at $o(1)$ rate which is slower than $O(\epsilon)$. 
   \item To make the variance and bias errors both $O(\epsilon)$,  we must require the stronger limit 
   \begin{align}
        \lim_{\substack{n\rightarrow\infty\\\epsilon\rightarrow 0}}\frac{n\epsilon^{4+d/2}}{\log n} = \infty. \label{eq: slower convergence}   
    \end{align}
    The additional deceleration of $\epsilon$ is needed in \eqref{eq: slower convergence} to ensure the discrete kernel density estimate $\rho^{(n)}_{\epsilon}$ is within $O(\epsilon^2)$ of the continuous kernel density estimate $\rho_{\epsilon}$ for each point $x_i$ as $n \to \infty$ and $\epsilon \to 0$, and then the division by $\epsilon$ would get us to within $O(\epsilon)$ with summably decaying probability. 
\end{enumerate}

\subsection{ Numerical solution of Dirichlet BVPs with TMD maps}
\label{subsec: numerical solutions of bvp}
 We now focus on the numerical solution to the Dirichlet boundary value problem (BVP) \eqref{eq: general boundary value problem}. Here we make the following assumption about the domain $\Omega$: 
\begin{assumption}
\label{Ass5}
    $\Omega \subseteq \mathcal{M}$ is a closed subset of $\mathcal{M}$ with $C^1$ boundary. 
\end{assumption}
 Numerical solution of BVP \eqref{eq: general boundary value problem} on manifolds is an important application of diffusion maps-based estimators of diffusion operators such as $\mathcal{L}$ \cite{berry2016local, jiang2023ghost, antil2021fractional}. An instance of this BVP is the \emph{committor problem} \eqref{eq: committor bvp}. We will solve the BVP \eqref{eq: general boundary value problem} numerically using the TMDmap algorithm. The resulting solution will be denoted by $v\equiv v_{n,\epsilon}$. The exact solution to BVP \eqref{eq: general boundary value problem} will be denoted by $u$. Using a maximum principle argument, we can transfer the consistency estimate from Theorem \ref{thm:main} to an error estimate for quantifying the accuracy of the numerical solution $v_{n,\epsilon}$.
\begin{theorem}[Error bound for the numerical solution]
\label{thm: BVP error estimate}
Consider the point cloud $\mathcal{X}(n)$ with $n$ i.i.d. samples of $\rho$. Let $v_{n,\epsilon}$ be a TMDmap numerical solution to BVP \eqref{eq: general boundary value problem}, and let $u$ be the exact solution to \eqref{eq: general boundary value problem}.
Furthermore, let $n \to \infty$ and $\epsilon \to 0$ so that
    \begin{align}
    \label{eq:nepsthm9}
\lim_{\substack{n\rightarrow\infty\\\epsilon\rightarrow 0}}\frac{n\epsilon^{2+d/2}}{\log n} &= \infty.
    \end{align}
    Then there exist constants $\epsilon_0 > 0$ and $n_0 \in \mathbb{N}$ such that for all $\epsilon \leq \epsilon_0$ and $n \geq n_0$  with probability greater or equal to $1 - 4n^{-2} - \exp{\left(-\mathbb{E}[\mathbb{I}_{\Omega}]\right)}$ 
            \begin{align}
        |v_{n,\epsilon}(x_i) - u(x_i)| \leq \epsilon\left[ \max_{x\in\mathcal{M}}C_{u,\mu,\rho}(x) + 1\right]|\phi(x_i)| \label{eq: committor simplified model} 
    \end{align}
    where $\phi$ is the exact solution to $\mathcal{L}\phi = 1$, $x\in\mathcal{M}\backslash\Omega$ and $\phi = 0$, $x\in\Omega$, 
    and
    \begin{align} 
        C_{u,\mu,\rho}(x)&:= \max_{\substack{\epsilon\le\epsilon_0\\n\ge n_0}}\left[\frac{\alpha\left(2\|\nabla f(x)\|\epsilon^{1/2} + 11{|f(x)|}\right)}{\rho^{1/2}(x)}\right.      \label{eq:C}\\
    &+ \left.\left|\mathcal{B}_{1}[u,\mu](x) + \mathcal{B}_{2}[u,\mu, \rho](x) + \mathcal{B}_{3}[u,\mu, \rho](x) \right| + O(\epsilon)\right],\notag
    \end{align}
    where the functions $\mathcal{B}_j[\cdot](\cdot)$, $j=1,2,3$, are defined in Theorem \ref{thm:main}.
\end{theorem}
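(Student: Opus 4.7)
The strategy is to convert the pointwise consistency bound from Theorem \ref{thm:main} into a uniform solution error bound by means of a discrete maximum principle, using the exact torsion function $\phi$ as a comparison barrier. First I would observe that $4\beta^{-1}L^{(n)}_{\epsilon,\mu}$ has non-positive diagonal, non-negative off-diagonals, and zero row sums, so its restriction $L^{\mathcal{I}\mathcal{I}}$ to interior indices has strictly positive absolute row sums $\sum_{j\in\mathcal{D}}[L^{(n)}_{\epsilon,\mu}]_{ij}$ at every interior node that is Gaussian-kernel-coupled to at least one boundary node. The event that this holds for \emph{every} $x_i \in \mathcal{I}$ occurs with probability at least $1 - \exp(-\mathbb{E}[\mathbb{I}_\Omega])$ by a Chernoff bound on the number of sample points falling inside $\Omega$, which is the source of the third probability term in the theorem. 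On this event, $-L^{\mathcal{I}\mathcal{I}}$ is a (strictly diagonally dominant) M-matrix and the standard discrete maximum principle applies.

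Next I would set up the error equation. Let $e := v_{n,\epsilon} - u$. The Dirichlet conditions give $e|_{\mathcal{D}} = 0$, and subtracting $\mathcal{L}u = f$ from the defining relation $4\beta^{-1}L^{(n)}_{\epsilon,\mu}v_{n,\epsilon} = f$ yields
\begin{equation*}
4\beta^{-1}L^{(n)}_{\epsilon,\mu}\,e(x_i) = -R_u(x_i), \qquad R_u(x_i) := 4\beta^{-1}L^{(n)}_{\epsilon,\mu}u(x_i) - \mathcal{L}u(x_i), \quad x_i \in \mathcal{I}.
\end{equation*}
Theorem \ref{thm:main} bounds $|R_u(x_i)| \le \epsilon C_{u,\mu,\rho}(x_i) + O(\epsilon^2)$ pointwise with probability $\ge 1-2n^{-3}$; a union bound over the $n$ sample points promotes this to $\|R_u\|_\infty \le \epsilon \max_x C_{u,\mu,\rho}(x)$ on an event of probability $\ge 1 - 2n^{-2}$. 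Applying Theorem \ref{thm:main} a second time to the exact barrier $\phi$ (which solves $\mathcal{L}\phi = 1$ on $\mathcal{M}\setminus\Omega$ with $\phi|_{\partial\Omega}=0$ and satisfies $\phi \le 0$ by the continuous maximum principle) gives $4\beta^{-1}L^{(n)}_{\epsilon,\mu}\phi(x_i) = 1 + R_\phi(x_i)$ with $\|R_\phi\|_\infty = O(\epsilon)$ on an independent failure set of probability at most $2n^{-2}$.

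The comparison step comes next. For a scalar $\eta > 0$ I would form $w^{\pm}(x_i) := \pm e(x_i) + \eta\,\phi(x_i)$; these vanish on $\mathcal{D}$, and on $\mathcal{I}$
\begin{equation*}
4\beta^{-1}L^{(n)}_{\epsilon,\mu}\,w^{\pm}(x_i) = \mp R_u(x_i) + \eta\bigl(1 + R_\phi(x_i)\bigr).
\end{equation*}
The choice $\eta = \|R_u\|_\infty / (1 - \|R_\phi\|_\infty)$ makes this right-hand side non-negative for both signs once $\epsilon \le \epsilon_0$ is small enough to ensure $\|R_\phi\|_\infty < 1$. Since $\phi \le 0$, the M-matrix maximum principle for $-L^{\mathcal{I}\mathcal{I}}$ forces $w^{\pm} \le 0$, yielding $|e(x_i)| \le \eta|\phi(x_i)|$. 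Expanding $(1 - \|R_\phi\|_\infty)^{-1} = 1 + O(\epsilon)$ and absorbing the multiplicative correction into an additive constant gives
\begin{equation*}
|v_{n,\epsilon}(x_i) - u(x_i)| \le \epsilon\bigl[\max_x C_{u,\mu,\rho}(x) + 1\bigr]|\phi(x_i)|
\end{equation*}
for all $\epsilon \le \epsilon_0$ and $n \ge n_0$. Union-bounding over the two consistency events and the M-matrix non-degeneracy event gives the claimed probability $1 - 4n^{-2} - \exp(-\mathbb{E}[\mathbb{I}_\Omega])$.

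The hardest part will be establishing the discrete maximum principle uniformly in the random point cloud, i.e., ensuring that $-L^{\mathcal{I}\mathcal{I}}$ is genuinely strictly diagonally dominant with high probability. This requires every interior node to be within $O(\sqrt{\epsilon})$ of at least one boundary sample, an event whose probability depends delicately on $\epsilon_0$, $n_0$, and the geometry of $\partial\Omega$; the Chernoff term $\exp(-\mathbb{E}[\mathbb{I}_\Omega])$ is the coarsest form of this control. A secondary subtlety is the bookkeeping needed to extract the clean additive ``$+1$'' constant from the Neumann-series expansion of $(1-\|R_\phi\|_\infty)^{-1}$: one must verify that the consistency error for the fixed barrier $\phi$ is controlled uniformly on the same high-probability event used for $u$, independently of the particular shape of $\phi$ near $\partial\Omega$.
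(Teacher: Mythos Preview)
Your approach is essentially the paper's: both use the method of comparison functions with the exact barrier $\phi$ solving $\mathcal{L}\phi=1$, $\phi|_{\partial\Omega}=0$, and the discrete maximum principle to transfer the pointwise consistency bound into a solution error bound. The paper forms $\psi = u-v \mp \epsilon(C_{\max}+1)\phi$, checks the sign of $[L\psi]_i$, and invokes the discrete maximum principle exactly as you do with $w^{\pm}$.

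The one point worth correcting is your identification of the ``hardest part.'' Because the unsparsified Gaussian kernel $k_\epsilon(x,y)=\exp(-\|x-y\|^2/\epsilon)$ is strictly positive for every pair of points, \emph{all} off-diagonal entries of $L^{(n)}_{\epsilon,\mu}$ are strictly positive. Hence $-L^{\mathcal{I}\mathcal{I}}$ is strictly diagonally dominant, and the discrete maximum principle holds, as soon as $\mathcal{D}$ contains even one point; no $O(\sqrt{\epsilon})$-proximity condition between interior and boundary samples is needed. The $\exp(-\mathbb{E}[\mathbb{I}_\Omega])$ term in the probability comes precisely from the event $\{\mathcal{D}\neq\emptyset\}$ (Lemma~\ref{lem:prob of sets lemma} in the paper), which is a one-line bound $(1-\mathbb{E}_\rho[\mathbb{I}_\Omega])^n \le e^{-n\mathbb{E}_\rho[\mathbb{I}_\Omega]}$, not a delicate geometric argument. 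The paper also sidesteps your Neumann-series bookkeeping by taking $\eta=\epsilon(C_{\max}+1)$ at the outset and verifying directly that $[4\beta^{-1}L\psi]_i \le -\epsilon\bigl(1-\epsilon C_\phi(1+C_{\max})\bigr)\le 0$ for $\epsilon$ small; this is algebraically equivalent to your route but cleaner.
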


{ The proof of Theorem \ref{thm: BVP error estimate} is found in Section \ref{subsec: numerical solutions of bvp proofs}. Moreover, Theorem \ref{thm: BVP error estimate} combined with Corollary \ref{cor: reducing bias} straightaway implies the following error estimate for the solution to the committor problem}. 

\begin{corollary}\label{cor: committor error estimate}
    Under Assumptions \ref{Ass1}--\ref{Ass5}, let $q_{n,\epsilon}$ be the TMDmap numerical solution to the committor problem \eqref{eq: solving committor problem algorithm}. Let $n\rightarrow\infty$ and $\epsilon\rightarrow 0$ so that \eqref{eq:nepsthm9} holds. Suppose that $\mathcal{M}$ is flat and $\rho = {\rm vol}(\mathcal{M})^{-1}$ is the uniform density on $\mathcal{M}$. Then there exists a constant $C_q(x) > 0$ given by
    \begin{align} 
        C_{q}(x) &:= \max_{\substack{\epsilon\le\epsilon_0\\n\ge n_0}}\big[\alpha\,{\rm vol}(\mathcal{M})^{1/2}\left(2\|\nabla q(x)\|\epsilon^{1/2}+ 11 q(x)\right)\label{eq:C} \\
         &+ \left|\frac{1}{4}\left[\mathcal{Q}\left(q\mu^{1/2}\right)- q \mathcal{Q}(\mu^{1/2}) \right]\right| + O(\epsilon)\bigg], 
    \end{align}
    such that  
    \begin{align}
        |q_{n,\epsilon}(x_i) - q(x_i)| &\leq \epsilon\left[ \max_{x\in\mathcal{M}}C_{q}(x) + 1\right]|\phi(x_i)| + O(\epsilon^2) \label{eq: committor error estimate} 
    \end{align}
    with high probability.
\end{corollary}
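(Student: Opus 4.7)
The plan is to derive this corollary as a direct specialization of Theorem \ref{thm: BVP error estimate}, applied with $u=q$, together with the cancellation identities recorded in Corollary \ref{cor: reducing bias}. Since the committor $q$ is the solution of a Dirichlet BVP of the form \eqref{eq: general boundary value problem} with domain $\Omega = A\cup B$, inhomogeneity $f\equiv 0$ on $\mathcal{M}_{AB}$, and boundary data $g$ equal to $0$ on $\partial A$ and $1$ on $\partial B$, Theorem \ref{thm: BVP error estimate} yields, for $\epsilon\le\epsilon_0$ and $n\ge n_0$ with probability at least $1-4n^{-2}-\exp(-\mathbb{E}[\mathbb{I}_\Omega])$,
\begin{equation*}
    |q_{n,\epsilon}(x_i) - q(x_i)| \le \epsilon\bigl[\max_{x\in\mathcal{M}} C_{q,\mu,\rho}(x) + 1\bigr]\,|\phi(x_i)|,
\end{equation*}
where $C_{q,\mu,\rho}$ is the expression defined by \eqref{eq:C} with $u$ replaced by $q$. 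What remains is to simplify $C_{q,\mu,\rho}$ under the three additional hypotheses of the corollary.

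Next, I would invoke each hypothesis in turn to collapse terms in $C_{q,\mu,\rho}$. First, the flatness of $\mathcal{M}$ sets $\omega\equiv 0$, so the second summand in $\mathcal{B}_1[q,\mu]$ (which carries the factor $\mu^{1/2}\omega$) vanishes and $\mathcal{B}_1[q,\mu]$ reduces to $\tfrac{1}{4}\bigl[\mathcal{Q}(q\mu^{1/2}) - q\,\mathcal{Q}(\mu^{1/2})\bigr]$. Second, since $\rho = \mathrm{vol}(\mathcal{M})^{-1}$ is constant, $\Delta\rho = 0$, so each of the two summands defining $\mathcal{B}_2[q,\mu,\rho]$ is zero. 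Third, since $\mathcal{L}q = 0$ on $\mathcal{M}_{AB}$, the second bracketed factor in $\mathcal{B}_3[q,\mu,\rho]$ is (up to a constant multiple of $\beta$) precisely $\mathcal{L}q$, hence vanishes on the interior; this is exactly the third bullet of the discussion preceding Corollary \ref{cor: reducing bias}. Collectively these identities leave only $\mathcal{B}_1[q,\mu] = \tfrac{1}{4}[\mathcal{Q}(q\mu^{1/2}) - q\,\mathcal{Q}(\mu^{1/2})]$ in the bias contribution.

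For the variance contribution, the uniform sampling density gives $\rho^{1/2}(x) = \mathrm{vol}(\mathcal{M})^{-1/2}$, so the prefactor $1/\rho^{1/2}(x)$ becomes $\mathrm{vol}(\mathcal{M})^{1/2}$, producing the multiplicative factor $\alpha\,\mathrm{vol}(\mathcal{M})^{1/2}\bigl(2\|\nabla q(x)\|\epsilon^{1/2} + 11\,q(x)\bigr)$ once we use $q\ge 0$ to drop the absolute value. Substituting these simplifications into the definition of $C_{u,\mu,\rho}$ yields exactly the expression for $C_q(x)$ in the statement, and the error estimate \eqref{eq: committor error estimate} follows from Theorem \ref{thm: BVP error estimate} with the remaining $O(\epsilon^2)$ term absorbed from the bias expansion.

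The proof is essentially bookkeeping: no new estimate is required beyond what is already in Theorem \ref{thm: BVP error estimate} and Corollary \ref{cor: reducing bias}. The only mildly non-trivial point I would be careful about is that $\mathcal{L}q = 0$ holds only on $\mathcal{M}_{AB}$ rather than on all of $\mathcal{M}$, so the cancellation $\mathcal{B}_3[q,\mu,\rho] \equiv 0$ needs to be understood on the interior nodes $x_i\in\mathcal{I}$ where the residual of the numerical scheme is being measured; this is precisely the set on which the maximum principle argument underpinning Theorem \ref{thm: BVP error estimate} controls $|v_{n,\epsilon}-u|$, so no difficulty arises. The probability statement and the function $\phi$ transfer verbatim from Theorem \ref{thm: BVP error estimate}.
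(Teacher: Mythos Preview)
Your proposal is correct and follows exactly the paper's approach: the paper states that Corollary \ref{cor: committor error estimate} follows ``straightaway'' from Theorem \ref{thm: BVP error estimate} combined with Corollary \ref{cor: reducing bias}, and you have simply unpacked that implication by specializing to $u=q$, invoking flatness ($\omega=0$), uniform $\rho$ ($\mathcal{B}_2=0$), and $\mathcal{L}q=0$ ($\mathcal{B}_3=0$), and substituting $\rho^{-1/2}=\mathrm{vol}(\mathcal{M})^{1/2}$ into the variance prefactor. Your additional remark about the cancellation $\mathcal{B}_3=0$ being needed only on the interior nodes is a valid clarification that the paper does not spell out.
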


%%%%%%%%%%% R E S U L T S:   P R O O F S
\section{Results: proofs}\label{sec: results_proofs}
\subsection{Bias Error: proofs}\label{subsec: bias error proofs}
We begin with Lemma \ref{lem: expansion} that gives the second-order kernel expansion formula \eqref{eq: expansion equation}. 
\begin{proof}
    The strategy of proving \eqref{eq: expansion equation} is standard in manifold learning, see e.g. S. Lafon's dissertation \cite{lafon2004diffusion}. We present it here nonetheless, specifically to prove that the $O(\epsilon^2)$ prefactor is a fourth-order differential operator and that it is given by \eqref{eq: expansion for R^d} in the case if $\mathcal{M}$ is locally flat. 
    
    The computation of the Taylor expansion of the integral 
    \begin{align}
        \int_{\mathcal{M}}k_{\epsilon}(x,y)f(y)\,d{\rm vol}(y) \label{eq: main integral}
    \end{align}
in terms of $\epsilon$ is done in three steps.

\begin{enumerate}
    \item \textbf{Reduce to a ball:} We first note that the integral over the manifold in \eqref{eq: main integral} can be replaced with an integral over any arbitrarily small open set around $U \ni x$, incurring an error that decays faster than any polynomial in $\epsilon$ as $\epsilon \to 0$. This is formalized in \cite[Lemma 4.1]{belkin2008towards} as follows: 
    \begin{align}
         \int_{\mathcal{M} \setminus U}k_{\epsilon}(x,y)f(y)\,d{\rm vol}(y) = o(\epsilon^l)
    \end{align}
    for any $l > 0$. This error will turn out to be dominated by the $O(\epsilon)$ error of approximating the Laplacian. This approach allows us to focus on computing the integral over $U$, which can now be expressed as the image of the exponential map on a small ball in the tangent space. 
    \item \textbf{Use exponential coordinates:} To compute the integral 
    \begin{align}
        \int_{U}k_{\epsilon}(x,y)f(y)\,d{\rm vol}(y),
    \end{align}
    we write $U = \exp_x{(B_{r}(0))}$ where $\exp_x$ is the exponential map from $T_x\mathcal{M}$ to $\mathcal{M}$ taking $0 \to x$ and $r = \epsilon^{l}$ is smaller than $B_{\text{inj}(\mathcal{M})}$ where $0 < l < 1/2$. Note that the Euclidean coordinates $(u_1, \ldots, u_d)$ are mapped by $\exp_x$ to a geodesic: $tu_i \to \gamma_i(t)$ and $s(u)$ is the respective system of normal coordinates $U \to \mathcal{M}$ where $s(0) = x$. Since $y = s(u)$ is a diffeomorphism from $B_{r}(0)$ onto its image we can use a change of variables: 
    \begin{align}
        \int_{\exp(B_{r}(0))}k_{\epsilon}(x,y)f(y)\,d{\rm vol}(y) = \int_{B_r(0)}k_{\epsilon}(y(0), y(u))f(y(u))dy(u)\,du \label{eq: change of vars}.
    \end{align}
    Here $dy(u)$ is the Jacobian of the parametrization of $y$ in terms of the Euclidean coordinates $u$. To pull the integral back onto the tangent space, we need to expand the \emph{extrinsic} distance on $\mathcal{M}$ in terms of the distance on the tangent plane. Furthermore, we will write the Jacobian as a power series in terms of $u$. All of these expansions will have the following correction terms {\color{blue}\cite[Corollary 3, Prop. 6]{smolyanov2007chernoff}}: 
    \begin{align}
        \|s(u) - s(0)\|_{2}^{2} = \|u\|_{2}^{2} + \mathcal{C}^{1}_{x}(u), \quad \mathcal{C}^{1}_{x}(u) = O(\|u\|_{2}^{4}) \label{eq: correction distance}\\
        dy(u) = 1 + \mathcal{C}^{2}_{x}(u), \quad \mathcal{C}^{2}_{x}(u) = O(\|u\|_{2}^{2}) \label{eq: correction jacobian} 
    \end{align}
    \item \textbf{Analysis on Euclidean space:} After expressing the integral in \eqref{eq: change of vars} over a subset of Euclidean space, we compute these integrals for up to the fourth order power series expansions of $f\circ y$ in terms of $u$. For notational convenience, denote $F: U \to \mathbb{R}$ as $F:= f \circ s$.
    \begin{align}
        (\pi \epsilon)^{-d/2}\mathcal{G}_{\epsilon}f(x) &= \int_{B_r(0)}k_{\epsilon}(y(0), y(u))f(y(u))dy(u)\,du + o(\epsilon^l)\notag \\
        &= \int_{B_r(0)}k_{\epsilon}(y(0), y(u))f(y(u))dy(u)\,du  + o(\epsilon^l) \notag \\
        &= \int_{B_r(0)}\exp{\left(-\frac{\|u\|_{2}^{2}}{\epsilon} - \mathcal{C}^{1}_{x}(u)\right)}T_{4}F(u) (1 + \mathcal{C}^{2}_{x}(u))\,du + o(\epsilon^l)  \notag \\
        &= \int_{\mathbb{R}^d}\exp{\left(-\frac{\|u\|_{2}^{2}}{\epsilon} - \mathcal{C}^{1}_{x}(u)\right)}T_{4}F(u) (1 + \mathcal{C}^{2}_{x}(u))\,du + o(\epsilon^l) \notag \\
        &= f(x) + \frac{\epsilon}{4}\left(\Delta f(x) - \omega(x)f(x)\right) + \frac{\epsilon^2}{4}\mathcal{Q}f(x) + O(\epsilon^3). \label{eq: R^d integral with expansion} 
    \end{align}
    
    The integral above can be distributed and solved using Gaussian integral formulas. All odd powers in $u$ will cancel, resulting in the appearance of leading order terms $O(1), O(\epsilon),$ and $O(\epsilon^2)$, with the prefactors depending on $x,f,$ and $\mathcal{M}$. The function $\omega$ arises in the $O(\epsilon)$ term due to the correction factor $\mathcal{C}^1_{x}(u)$; the analytical formula for $\omega$ can be found in \cite[Prop. 7]{lafon2004diffusion}. Additionally, the $O(\epsilon^2)$ terms arise as the integration of fourth-order monomials against the Gaussian kernel. These monomials will arise multiplied with \emph{at most} the fourth derivatives of $f$. Consequently, the prefactor $\mathcal{Q}$ on the $O(\epsilon^2)$ term is a fourth-order (non-linear) differential operator. 
\end{enumerate}
Additionally, if $g \equiv I$ for a small enough neighborhood around $x$, then the correction terms in \eqref{eq: correction distance} and \eqref{eq: correction jacobian} are zero. As a consequence, \eqref{eq: R^d integral with expansion} simplifies to: 
\begin{align*}
    (\pi \epsilon)^{-d/2}\mathcal{G}_{\epsilon}f(x) &= \int_{\mathbb{R}^d}\exp{\left(-\frac{\|u\|_{2}^{2}}{\epsilon} - \mathcal{C}^{1}_{x}(u)\right)}T_{4}F(u) (1 + \mathcal{C}^{2}_{x}(u))\,du + o(\epsilon^l) \\
    &=\int_{\mathbb{R}^d}\exp{\left(-\frac{\|u\|_{2}^{2}}{\epsilon}\right)}T_{4}F(u)\,du + o(\epsilon^l) \\ 
    &= \int_{\mathbb{R}^d}\exp{\left(-\frac{\|u\|_{2}^{2}}{\epsilon}\right)}\Bigg(f(x) + \partial_if(x)u^i + \frac{1}{2}\partial_{ij}f(x)u^iu^j \\
    &+ \frac{1}{6}\partial_{ijk}f(x)u^iu^ju^k + \frac{1}{24} \partial_{ijkl}f(x)u^iu^ju^ku^l \Bigg)\,du + o(\epsilon^l) \\
    &= f(x) + \frac{\epsilon}{4}\Delta f(x) + \frac{\epsilon^2}{4}\left(\frac{1}{6}\left[\partial^{i}_{i}\partial^{j}_{j}f + 2\partial^{ii}_{ii}f(x)\right]\right) + O(\epsilon^3).
\end{align*}
The above argument also shows that under the condition $g \equiv I$ the term $\omega(x) = 0$. 
\end{proof}
Next, we prove Theorem \ref{thm: bias error} that gives the bias error \eqref{eq: full bias error}.

\begin{proof}(Theorem \ref{thm: bias error}) Lemma \ref{lem: expansion} implies that
\begin{align}
\rho_{\epsilon}(x) & = \int_{\mathcal{M}} k_{\epsilon}(x,y)\rho(y) dy \notag \\
&= (\pi\epsilon)^{d/2}\left[ \rho(x) + \frac{\epsilon}{4}\left(\Delta\rho(x) - \omega(x)\rho(x)\right) + \frac{\epsilon^2}{4}\mathcal{Q}(\rho) + O(\epsilon^3)  \right]. \label{c13}
\end{align}

Factoring out $\rho$ and using a geometric series expansion we obtain
\begin{align}
\rho_{\epsilon}^{-1}(x) &= (\pi\epsilon)^{-d/2}\rho^{-1}(x)\Big[1  + \left(-\frac{\epsilon}{4}\left(\frac{\Delta \rho}{\rho} - \omega\right) - \frac{\epsilon^2}{4}\frac{\mathcal{Q}(\rho)}{\rho} - O(\epsilon^3)\right) \notag \\ 
&+\left(-\frac{\epsilon}{4}\frac{\Delta \rho}{\rho} - \frac{\epsilon^2}{4}\frac{\mathcal{Q}(\rho)}{\rho} - O(\epsilon^3)\right)^2 + O(\epsilon^3)\Big] \label{eq:rhoeinv}\\
&= (\pi\epsilon)^{-d/2}\rho^{-1}(x)\Big[1 - \frac{\epsilon}{4}\left(\frac{\Delta \rho}{\rho} - \omega\right) - \frac{\epsilon^2}{4}\frac{\mathcal{Q}(\rho)}{\rho} + \frac{\epsilon^2}{16}\Big(\frac{\Delta \rho}{\rho}- \omega \Big)^2 + O(\epsilon^3)\Big]\notag
\end{align}

Now we compute the right-renormalized kernel operator that factors in the effect of the target density $\mu$:
\begin{align}
\label{eq:Kemuf}
K_{\epsilon,\mu}f(x) = \int_{\mathcal{M}}k_{\epsilon}(x,y)\mu^{1/2}(y)\rho_{\epsilon}^{-1}(y)f(y)\rho(y)d{\rm vol}(y)
\end{align}
Plugging \eqref{eq:rhoeinv} into \eqref{eq:Kemuf} we get:
\begin{align}
K_{\epsilon,\mu}f(x) 
&= \int_{\mathcal{M}}k_{\epsilon}(x,y)\frac{\mu^{1/2}f\rho}{(\pi\epsilon)^{d/2}\rho}\left[1 - \frac{\epsilon}{4}\left(\frac{\Delta \rho}{\rho} - \omega\right) - \frac{\epsilon^2}{4}\frac{\mathcal{Q}(\rho)}{\rho} \right.\notag \\
& \left.+ \frac{\epsilon^2}{16}\left(\frac{\Delta \rho}{\rho} - 
\omega\right)^2 + O(\epsilon^3)\right]\,d{\rm vol}(y) \nonumber \\
&= (\pi\epsilon)^{-d/2}\left[\int_{M} k_{\epsilon}(x,y)\mu^{1/2}f(y)\,d{\rm vol}(y) - \int_{\mathcal{M}}k_{\epsilon}(x,y)\mu^{1/2}f\frac{\epsilon}{4}\left(\frac{\Delta \rho}{\rho} - \omega\right)\,d{\rm vol}(y)\right. \nonumber \\ 
&- \int_{M}k_{\epsilon}(x,y)\mu^{1/2}f\frac{\epsilon^2}{4}\frac{\mathcal{Q}(\rho)}{\rho}\,d{\rm vol}(y)
\notag \\
& \left. + \int_{\mathcal{M}}k_{\epsilon}(x,y)\mu^{1/2}f\frac{\epsilon^2}{16}\left(\frac{\Delta \rho}{\rho} - \omega\right)^2 \, d{\rm vol}(y) + O(\epsilon^3)\right]. \label{Kemf0}
\end{align}
Computing the integrals in \eqref{Kemf0} using the second-order kernel expansion formula \eqref{eq: expansion equation} we obtain
\begin{align}
K_{\epsilon,\mu}f(x) &= \mu^{1/2}f + \frac{\epsilon}{4}\Delta(\mu^{1/2}f) + \frac{\epsilon^2}{4}\mathcal{Q}(\mu^{1/2}f) - \mu^{1/2}f\frac{\epsilon}{4}\left(\frac{\Delta \rho}{\rho} - \omega\right)  \label{Kf} \\
&- \frac{\epsilon^2}{16}\Delta\left(\mu^{1/2}f\left[\frac{\Delta \rho}{\rho} - \omega\right]\right) - \frac{\epsilon^2}{4}\mu^{1/2}f\frac{\mathcal{Q}(\rho)}{\rho} + \frac{\epsilon^2}{16}\mu^{1/2}f\left(\frac{\Delta \rho}{\rho} - \omega\right)^2 + O(\epsilon^3).\notag
\end{align}
Finally, we group the terms in \eqref{Kf} according to the order of $\epsilon$:
\begin{align}
\mathcal{K}_{\epsilon,\mu}f 
& = f\mu^{1/2} + \frac{\epsilon}{4}\left[\Delta(\mu^{1/2}f) - f\mu^{1/2}\left(\frac{\Delta \rho}{\rho} - \omega\right)\right] +\notag \\
&+\frac{\epsilon^2}{4}\left[\mathcal{Q}\left(f\mu^{1/2}\right) -
\frac{1}{4}\Delta\left(f\mu^{1/2}\left(\frac{\Delta \rho}{\rho} - \omega\right)\right) +f\mu^{1/2}\left(\frac{1}{4}\left(\frac{\Delta \rho}{\rho} - \omega\right)^2 -
\frac{\mathcal{Q}(\rho)}{\rho}\right)\right] \notag\\
&~+ O(\epsilon^3). \label{Km1}
\end{align}
To make the expression \eqref{Km1} more compact, we denote the operator in the last square brackets by
\begin{align}
\label{R}
\mathcal{R}_{\mu,\rho}f:=\mathcal{Q}\left(f\mu^{1/2}\right) -
\frac{1}{4}\Delta\left(f\mu^{1/2}\left(\frac{\Delta \rho}{\rho} - \omega\right)\right) +f\mu^{1/2}\left(\frac{1}{4}\left(\frac{\Delta \rho}{\rho} - \omega\right)^2 -
\frac{\mathcal{Q}(\rho)}{\rho}\right).
\end{align}
Then $\mathcal{K}_{\epsilon,\mu}f$ in \eqref{Km1} can be written as
\begin{align}
\label{Km2}
\mathcal{K}_{\epsilon,\mu}f  = f\mu^{1/2} + \frac{\epsilon}{4}\left[\Delta(\mu^{1/2}f) - f\mu^{1/2}\left(\frac{\Delta \rho}{\rho}-\omega\right)\right]
+\frac{\epsilon^2}{4}\mathcal{R}_{\mu,\rho}f + O(\epsilon^3).
\end{align}
Now we are ready to calculate the Markov operator $\mathcal{P}_{\epsilon,\mu}f$:
\begin{align}
 \label{P1}
 \mathcal{P}_{\epsilon,\mu}f  = \frac{\mathcal{K}_{\epsilon,\mu}f }{\mathcal{K}_{\epsilon,\mu}1}=
\frac{ f\mu^{1/2} + \frac{\epsilon}{4}\left[\Delta(\mu^{1/2}f) - f\mu^{1/2}\left(\frac{\Delta \rho}{\rho}-\omega\right)\right]
+\frac{\epsilon^2}{4}\mathcal{R}_{\mu,\rho}f + O(\epsilon^3)}
{
\mu^{1/2} + \frac{\epsilon}{4}\left[\Delta(\mu^{1/2}) - \mu^{1/2}\left(\frac{\Delta \rho}{\rho}-\omega\right)\right]
+\frac{\epsilon^2}{4}\mathcal{R}_{\mu,\rho}1 + O(\epsilon^3)
}.
\end{align}
Dividing the numerator and the denominator by $\mu^{1/2}$ and applying Taylor expansion we get:
\begin{align}
&~ \mathcal{P}_{\epsilon,\mu}f =
\left( f + \frac{\epsilon}{4}\left[\frac{\Delta(\mu^{1/2}f)}{\mu^{1/2}} - f\left(\frac{\Delta \rho}{\rho} - \omega\right)\right]
+\frac{\epsilon^2}{4}\frac{\mathcal{R}_{\mu,\rho}f}{\mu^{1/2}} + O(\epsilon^3)\right)\notag\\
&\times\left(
1 - \frac{\epsilon}{4}\left[\frac{\Delta(\mu^{1/2}) }{\mu^{1/2}}- \left(\frac{\Delta \rho}{\rho} - \omega\right)\right]
- \frac{\epsilon^2}{4}\frac{\mathcal{R}_{\mu,\rho}1}{\mu^{1/2}} +  \frac{\epsilon^2}{16}\left[\frac{\Delta(\mu^{1/2}) }{\mu^{1/2}}- \left(\frac{\Delta \rho}{\rho} - \omega\right)\right]^2 + O(\epsilon^3)\right) \notag \\
&=f +  \frac{\epsilon}{4}\left[\frac{\Delta(\mu^{1/2}f)}{\mu^{1/2}} - f\frac{\Delta(\mu^{1/2}) }{\mu^{1/2}}\right] +
\frac{\epsilon^2}{4}\left[\frac{\mathcal{R}_{\mu,\rho}f}{\mu^{1/2}} -f\frac{\mathcal{R}_{\mu,\rho}1}{\mu^{1/2}}\right]\notag\\
&+  \frac{\epsilon^2}{16}\left(f \left[\frac{\Delta(\mu^{1/2}) }{\mu^{1/2}}- \left(\frac{\Delta \rho}{\rho} - \omega\right)\right]^2  \right.\notag\\
&-\left.
\left[\frac{\Delta(\mu^{1/2}f)}{\mu^{1/2}} - f\left(\frac{\Delta \rho}{\rho} - \omega\right)\right]
\left[\frac{\Delta(\mu^{1/2}) }{\mu^{1/2}}- \left(\frac{\Delta \rho}{\rho} - \omega\right)\right]\right) + O(\epsilon^3).\label{P2}
\end{align}
The last term in \eqref{P2} can be simplified by noting that
\begin{align}
&f\left[\frac{\Delta(\mu^{1/2}) }{\mu^{1/2}}- \left(\frac{\Delta \rho}{\rho} - \omega\right)\right]^2 -
\left[\frac{\Delta(\mu^{1/2}f)}{\mu^{1/2}} - f\left(\frac{\Delta \rho}{\rho} - \omega\right)\right]
\left[\frac{\Delta(\mu^{1/2}) }{\mu^{1/2}}- \left(\frac{\Delta \rho}{\rho} - \omega\right)\right] \notag \\
&= \left[\frac{\Delta(\mu^{1/2}) }{\mu^{1/2}}- \left(\frac{\Delta \rho}{\rho} - \omega\right)\right]\left[f\frac{\Delta(\mu^{1/2}) }{\mu^{1/2}}- \frac{\Delta \mu^{1/2}f}{\mu^{1/2}}\right]. \label{simplify1}
\end{align}

Taking \eqref{simplify1} into account, we write out the generator$\mathcal{L}_{\epsilon,\mu}f$:
\begin{align}
\mathcal{L}_{\epsilon,\mu}f  = \frac{ \mathcal{P}_{\epsilon,\mu}f  - f}{\epsilon} &=
\frac{1}{4}\left[\frac{\Delta(\mu^{1/2}f)}{\mu^{1/2}} - f\frac{\Delta(\mu^{1/2}) }{\mu^{1/2}}\right]
+\frac{\epsilon}{4}\left[\frac{\mathcal{R}_{\mu,\rho}f}{\mu^{1/2}} -f\frac{\mathcal{R}_{\mu,\rho}1}{\mu^{1/2}}\right]\label{L1}\\
&+  \frac{\epsilon}{16}\left[\frac{\Delta(\mu^{1/2}) }{\mu^{1/2}}- \left(\frac{\Delta \rho}{\rho} - \omega\right)\right]\left[f\frac{\Delta(\mu^{1/2}) }{\mu^{1/2}}- \frac{\Delta \mu^{1/2}f}{\mu^{1/2}}\right] + O(\epsilon^2).\notag
\end{align}

It is straightforward to check that the limit $\epsilon \to 0$ is 
\begin{align}
    \lim_{\epsilon\rightarrow 0} \mathcal{L}_{\epsilon, \mu} =  \frac{1}{4}\left[\frac{\Delta(\mu^{1/2}f)}{\mu^{1/2}} - f\frac{\Delta(\mu^{1/2}) }{\mu^{1/2}}\right]
    = \frac{1}{4}\left[ \Delta f+\nabla \log \mu \cdot \nabla f \right]\equiv\frac{\beta}{4}\mathcal{L}.\label{finalop}
\end{align}
The term $\epsilon\mathcal{B}_3[f,\mu,\rho](x)$ in \eqref{eq: full bias error} is the first term in the second line of \eqref{L1}. The terms $\epsilon\mathcal{B}_1[f,\mu](x)$ and $\epsilon\mathcal{B}_2[f,\mu,\rho](x)$ in \eqref{eq: full bias error} are recovered by computing the term with the operators $\mathcal{R}_{\mu,\rho}$ in \eqref{L1}.
\end{proof}

\subsection{Variance error: proofs}
\label{subsec: variance error proofs}
\subsubsection{Bounds for the error in the kernel density estimate}
\label{sec:proofThm9}
To prove Theorem \ref{thm:M1} establishing error bound for the discrete kernel density estimate (KDE), we first derive concentration inequalities for the discrepancy between $\rho_{\epsilon}(x)$ and $\rho^{(n)}_{\epsilon}(x)$ in Lemmas \ref{lemma:M1} and \ref{lemma:M2} below. There are two cases requiring somewhat different reasoning. In Lemma \ref{lemma:M1}, the point $x$ is chosen independently of the point cloud $\mathcal{X}(n)$, while in Lemma \ref{lemma:M2}, the point $x$ belongs to $\mathcal{X}(n)$. 

\begin{lemma}
\label{lemma:M1}
Let  $\mathcal{X}(n): = \{x_j\}_{j=1}^n \subset\mathcal{M}$ be a point cloud sampled according to the density $\rho(x)$ and let $x\in\mathcal{M}$ be an arbitrary point selected independently of $\mathcal{X}(n)$.  Let $\rho_{\epsilon}(x)$ and $\rho^{(n)}_{\epsilon}(x)$ be the KDEs defined by \eqref{eq:rhoeps} and \eqref{eq:rhoepsn} respectively. Then  for any $\epsilon$ sufficiently small and any constant $\alpha = o(\epsilon^{-1})$, the following relationship holds:
\begin{equation}
\label{eq:lemmaM1}
\mathbb{P}\Bigl(\left|\rho^{(n)}_{\epsilon}(x) - \rho_{\epsilon}(x)\right| \ge (\pi\epsilon)^{d/2}\rho(x) \epsilon^2\alpha\Bigl)\thinspace \le \thinspace2\exp\left[-\frac{n(2\pi)^{d/2}\rho(x)\epsilon^{4+d/2}\alpha^2 }{2 + O(\epsilon^{\min\{1,d/2\}})}\right].
\end{equation}
\end{lemma}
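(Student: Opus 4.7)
The plan is a direct application of Bernstein's inequality \eqref{eq:Bernstein0} to the centered i.i.d.\ sum
\begin{equation*}
\rho^{(n)}_\epsilon(x) - \rho_\epsilon(x) = \frac{1}{n}\sum_{j=1}^n Y_j, \qquad Y_j := k_\epsilon(x, x_j) - \rho_\epsilon(x).
\end{equation*}
Because the evaluation point $x$ is selected independently of $\mathcal{X}(n)$ and the $x_j$ are i.i.d.\ with density $\rho$, the $Y_j$ are i.i.d.\ with mean zero. The two inputs required by Bernstein are a uniform bound $M$ on $|Y_j|$ and the variance $\sigma^2 = \mathbb{E}[Y_j^2]$. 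Since $k_\epsilon(x,\cdot)\in(0,1]$ and $\rho_\epsilon(x) = (\pi\epsilon)^{d/2}\rho(x)(1+O(\epsilon)) = o(1)$ by \eqref{eq:rhoeps}, the trivial bound $|Y_j|\le M := 1$ holds for all sufficiently small~$\epsilon$.

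The crux of the argument is the variance computation, where the key identity is
\begin{equation*}
k_\epsilon(x,y)^2 = \exp\!\left(-2\|x-y\|^2/\epsilon\right) = k_{\epsilon/2}(x,y).
\end{equation*}
Applying Lemma \ref{lem: expansion} with bandwidth $\epsilon/2$ therefore yields
\begin{equation*}
\mathbb{E}[k_\epsilon(x,x_j)^2] = \int_{\mathcal{M}} k_{\epsilon/2}(x,y)\rho(y)\,d{\rm vol}(y) = (\pi\epsilon/2)^{d/2}\rho(x)\bigl(1 + O(\epsilon)\bigr).
\end{equation*}
Subtracting $\rho_\epsilon^2(x) = O(\epsilon^d)$, which is of strictly smaller order than the leading $(\pi\epsilon/2)^{d/2}\rho(x)$, produces
\begin{equation*}
\sigma^2 = (\pi\epsilon/2)^{d/2}\rho(x)\bigl(1 + O(\epsilon^{\min\{1,d/2\}})\bigr),
\end{equation*}
where the exponent $\min\{1,d/2\}$ appears because in dimension $d=1$ the discarded $\rho_\epsilon^2$ term contributes at relative order $\epsilon^{1/2}$, whereas for $d\ge 2$ the $O(\epsilon)$ correction from Lemma \ref{lem: expansion} dominates.

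With these ingredients I apply \eqref{eq:Bernstein0} to $\pm \sum_j Y_j$ at deviation level $s = nt$ with $t = (\pi\epsilon)^{d/2}\rho(x)\epsilon^2\alpha$, picking up the factor of $2$ on the right-hand side of \eqref{eq:lemmaM1} from a union bound over the two directions. Using the identity $(\pi\epsilon)^{d/2} = 2^{d/2}(\pi\epsilon/2)^{d/2}$ and dividing numerator and denominator of the Bernstein exponent by $n(\pi\epsilon/2)^{d/2}\rho(x)$ reduces the numerator to $n(2\pi)^{d/2}\rho(x)\epsilon^{4+d/2}\alpha^2$ and the denominator to $2 + O(\epsilon^{\min\{1,d/2\}}) + \tfrac{2}{3}2^{d/2}\epsilon^2\alpha$. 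The hypothesis $\alpha = o(\epsilon^{-1})$ guarantees $\epsilon^2\alpha = o(1)$, so the Bernstein linear-in-$t$ term is absorbed into the $O(\epsilon^{\min\{1,d/2\}})$ correction, yielding \eqref{eq:lemmaM1}. The only subtle step is this bookkeeping of subleading corrections; everything else is a routine substitution.
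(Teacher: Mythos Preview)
Your proof is correct and follows essentially the same approach as the paper: define the centered i.i.d.\ summands, bound them by $1$, compute the variance via the identity $k_\epsilon^2 = k_{\epsilon/2}$ together with Lemma~\ref{lem: expansion}, and apply Bernstein's inequality in both directions. The only cosmetic difference is that the paper normalizes the summands by $1/n$ (so $|X_j|\le 1/n$) and applies Bernstein at level $t$, whereas you keep $|Y_j|\le 1$ and apply it at level $nt$; these are equivalent. One small point of exposition: the hypothesis $\alpha = o(\epsilon^{-1})$ actually gives $\epsilon^2\alpha = o(\epsilon)$, not merely $o(1)$, and it is this stronger bound that lets you absorb the linear Bernstein term into the $O(\epsilon^{\min\{1,d/2\}})$ correction.
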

\begin{proof}
The difference in the left-hand side of \eqref{eq:lemmaM1} can be written as
\begin{equation}
    \label{eq:rhoediff}
\rho^{(n)}_{\epsilon}(x) - \rho_{\epsilon}(x) = 
\frac{1}{n}\sum_{j=1}^n\left(k_{\epsilon}(x,x_j) - \rho_{\epsilon}(x)\right).
\end{equation}
The random variables
\begin{equation}
    \label{eq:X_j}
    X_j:= \frac{1}{n}\left(k_{\epsilon}(x,x_j) - \rho_{\epsilon}(x)\right),\quad 1\le j\le n,
\end{equation}
have zero mean and are bounded by $|X_j|\le n^{-1}$, $1\le j\le n$, as $\rho_{\epsilon}(x)\in(0,1)$ and $\rho^{(n)}_{\epsilon}(x)\in(0,1]$.
The variance of $X_j$ can be calculated as 
\begin{equation}
    \label{eq:VarXj}
    {\sf Var}(X_j) = \frac{1}{n^2}\left(\mathbb{E}\left[k_{\epsilon}(x,x_j)^2\right] - \mathbb{E}^2\left[k_{\epsilon}(x,x_j)\right]\right).
\end{equation}
The expectations in \eqref{eq:VarXj} are calculated with the aid of Lemma \ref{lem: expansion} and the observation that $k_{\epsilon}(\cdot,\cdot)^2 = k_{\epsilon/2}(\cdot,\cdot)$:
\begin{align}
        \mathbb{E}[k_{\epsilon}(x,\cdot)^2] &= \int_{\mathcal{M}}(k_{\epsilon}(x,y))^2 \rho(y) \,d\text{vol}(y)\notag\\
        &= \int_{\mathcal{M}}k_{\epsilon/2}(x,y) \rho(y) \,d\text{vol}(y) = \left(\frac{\pi \epsilon}{2}\right)^{d/2}\left(\rho(x)  + O(\epsilon)\right); \label{eq:Eke2}
\end{align}
\begin{align}
         \mathbb{E}^2[k_{\epsilon}(x,\cdot)]= \left(\int_{\mathcal{M}}k_{\epsilon}(x,y) \rho(y) \,d\text{vol}(y)\right)^2 = \left(\pi \epsilon\right)^{d}\left(\rho^2(x)  + O(\epsilon)\right). \label{E2ke} 
\end{align}
Therefore, 
\begin{equation}
    \label{eq:VarX_jfinal} 
    {\sf Var}(X_j) = \frac{1}{n^2} \left[\left(\frac{\pi \epsilon}{2}\right)^{d/2}\left(\rho(x)  + O(\epsilon)\right) +  O(\epsilon^d)\right] = \frac{1}{n^2} \left[\left(\frac{\pi \epsilon}{2}\right)^{d/2}\left(\rho(x)  + O(\epsilon^{\min\{1,d/2\})}\right)\right].
\end{equation}
The application of Bernstein's inequality \eqref{eq:Bernstein0} to the sum of  $X_j$s  gives
\begin{equation}
    \label{eq:Bernstein1}
    \mathbb{P}\left(\frac{1}{n}\sum_{j=1}^n\left[k_{\epsilon}(x,x_j) - \rho_{\epsilon}(x)\right] \ge t\right)
    \le \exp\left[-\frac{t^2}{\frac{1}{n}\left(\frac{\pi\epsilon}{2}\right)^{d/2}\left(\rho(x) + O(\epsilon^{\min\{1,d/2\}})\right) + \frac{2}{3}\frac{1}{n}t}\right].
\end{equation}
We want the relative error 
\begin{equation}
    \label{eq:rel_errM1}
    \frac{\left|\rho^{(n)}_{\epsilon}(x) -  \rho_{\epsilon}(x)\right|}{\rho_{\epsilon}(x)}
\end{equation}
to be small compared to $\epsilon$. Motivated by the formula \eqref{eq:rhoeps} for $\rho_{\epsilon}(x)$,  we choose 
\begin{equation}
    \label{eq:tchoice} t = (\pi\epsilon)^{d/2}\rho(x) \epsilon^2\alpha,
\end{equation}
where $\alpha$ is a constant satisfying $\alpha =o(\epsilon^{-1})$. In this case, the term $\tfrac{2}{3n}t$ in the denominator of \eqref{eq:Bernstein1} is $\tfrac{2}{3n}(\pi\epsilon)^{d/2}o(\epsilon)$.
Plugging \eqref{eq:rhoediff} and \eqref{eq:tchoice} into \eqref{eq:Bernstein1} we obtain
\begin{equation}
    \label{eq:Bernstein2}
    \mathbb{P}\Bigl(\rho^{(n)}_{\epsilon}(x) - \rho_{\epsilon}(x)  \ge(\pi\epsilon)^{d/2} \rho(x)\epsilon^2\alpha \Bigl)
    \le \exp\left[-\frac{n(2\pi)^{d/2}\rho(x)\epsilon^{4+d/2}\alpha^2 }{2 + O(\epsilon^{\min\{1,d/2\}})}\right].
\end{equation}
Redefining $X_j$ as $-X_j$, $1\le j\le n$ and repeating the argument above, we obtain the same upper bound for the probability 
$\mathbb{P}\left(\rho^{(n)}_{\epsilon}(x) - \rho_{\epsilon}(x)  \le - (\pi\epsilon)^{d/2}\rho(x) \epsilon^2\alpha \right)$.
Then the desired result \eqref{eq:lemmaM1} readily follows.
\end{proof}

Unfortunately, the estimate obtained in Lemma \ref{lemma:M1} is not suitable for the settings of any standard diffusion map algorithm including the TMD map as all quantities of interest including $\rho_{\epsilon}$ are evaluated at the points of the given point cloud. Therefore, $x$ is one of the points of the point cloud $\mathcal{X}(n)$, say, $x\equiv x_i$, which means that $x$ and $\mathcal{X}(n)$ are not sampled independently. 
The discrepancy between $\rho_{\epsilon}^{(n)}(x_i)$ and $\rho_{\epsilon}(x_i)$ is estimated in the following lemma.
\begin{lemma}
\label{lemma:M2}
Let  $\mathcal{X}(n): = \{x_j\}_{j=1}^n \subset\mathcal{M}$ be a point cloud sampled according to the density $\rho(x)$ and let $x_i\in\mathcal{X}(n)$ be any selected point of this point cloud.  Let $\rho_{\epsilon}(x_i)$ and $\rho^{(n)}_{\epsilon}(x_i)$ be the KDEs defined by \eqref{eq:rhoeps} and \eqref{eq:rhoepsn} respectively with $x\equiv x_i$. Then for $\epsilon$ small enough, $\alpha=o(\epsilon^{-1})$ and $n$ large enough so that $n(\pi\epsilon)^{d/2}\rho(x_i)\epsilon^2\alpha\gg 1$,  the following relationship holds:
\begin{equation}
\label{eq:lemmaM2}
\mathbb{P}\Bigl(\left|\rho^{(n)}_{\epsilon}(x_i) - \rho_{\epsilon}(x_i)\right| \ge (\pi\epsilon)^{d/2}\rho(x_i)\epsilon^{2}\alpha ~|~ x_i \Bigl)\thinspace \le \thinspace 2\exp\left[-\frac{n(2\pi)^{d/2}\rho(x_i)\epsilon^{4+d/2}\alpha^2 }{2 + O(\epsilon^{\min\{1,d/2\}})}\right].
\end{equation}
\end{lemma}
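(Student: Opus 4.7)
The plan is to adapt the proof of Lemma \ref{lemma:M1} by conditioning on the selected point $x_i$ in order to restore the i.i.d. structure lost when $x_i$ itself belongs to the point cloud. Once we condition on $x_i$, the remaining points $\{x_j\}_{j\neq i}$ are i.i.d. samples from $\rho$, independent of $x_i$, so Bernstein's inequality becomes applicable to the $n-1$ summands indexed by $j\neq i$.

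First I would split off the diagonal term. Since $k_\epsilon(x_i,x_i)=1$,
\begin{equation*}
\rho^{(n)}_\epsilon(x_i)-\rho_\epsilon(x_i) \;=\; \frac{1}{n}\bigl(1-\rho_\epsilon(x_i)\bigr) \;+\; \frac{1}{n}\sum_{j\neq i}\bigl(k_\epsilon(x_i,x_j)-\rho_\epsilon(x_i)\bigr).
\end{equation*}
Conditional on $x_i$, the first summand is deterministic and, by Lemma \ref{lem: expansion}, equals $\tfrac{1}{n}\bigl(1-(\pi\epsilon)^{d/2}[\rho(x_i)+O(\epsilon)]\bigr)$, so it is $O(n^{-1})$. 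Under the hypothesis $n(\pi\epsilon)^{d/2}\rho(x_i)\epsilon^2\alpha\gg 1$, this deterministic contribution is asymptotically negligible compared to the threshold $(\pi\epsilon)^{d/2}\rho(x_i)\epsilon^2\alpha$ on the left-hand side of \eqref{eq:lemmaM2}; in particular, for $\epsilon$ small and $n$ large enough it is smaller than any fixed fraction (say one quarter) of that threshold and may be absorbed.

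Next I would handle the random part. Define, conditional on $x_i$,
\begin{equation*}
Y_j := \frac{1}{n}\bigl(k_\epsilon(x_i,x_j)-\rho_\epsilon(x_i)\bigr),\qquad j\neq i.
\end{equation*}
These are $n-1$ independent zero-mean random variables bounded by $|Y_j|\le n^{-1}$, and the identical variance computation performed in Lemma \ref{lemma:M1} (using $k_\epsilon^2 = k_{\epsilon/2}$ together with Lemma \ref{lem: expansion}) yields
\begin{equation*}
{\sf Var}(Y_j) \;=\; \frac{1}{n^2}\left(\frac{\pi\epsilon}{2}\right)^{d/2}\!\Bigl(\rho(x_i)+O\bigl(\epsilon^{\min\{1,d/2\}}\bigr)\Bigr).
\end{equation*}
Applying Bernstein's inequality \eqref{eq:Bernstein0} to $\sum_{j\neq i}Y_j$ with $t$ equal to (a little less than) the threshold $(\pi\epsilon)^{d/2}\rho(x_i)\epsilon^2\alpha$ gives, after the same algebra as in \eqref{eq:Bernstein1}--\eqref{eq:Bernstein2}, the one-sided bound
\begin{equation*}
\exp\!\left[-\frac{n(2\pi)^{d/2}\rho(x_i)\epsilon^{4+d/2}\alpha^2}{2+O(\epsilon^{\min\{1,d/2\}})}\right],
\end{equation*}
using the fact that the replacement $n\mapsto n-1$ only modifies the denominator by a $1+O(n^{-1})$ factor which is absorbed into the $O(\epsilon^{\min\{1,d/2\}})$ error. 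Running the argument with $Y_j$ replaced by $-Y_j$ produces the matching lower-tail bound, and combining the two via a union bound yields the factor of $2$ in \eqref{eq:lemmaM2}.

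The main obstacle is bookkeeping rather than probability: one must verify that the deterministic $O(n^{-1})$ diagonal contribution and the replacement of $n$ by $n-1$ do not degrade the exponent. This is exactly where the standing assumption $n(\pi\epsilon)^{d/2}\rho(x_i)\epsilon^2\alpha\gg 1$ is used, and it ensures both perturbations are subleading so that the exponent coincides with that of Lemma \ref{lemma:M1}. Since the resulting bound is independent of the conditioning value $x_i$, the conditional inequality \eqref{eq:lemmaM2} holds as stated.
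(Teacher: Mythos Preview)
Your proof is correct and follows the same strategy as the paper: condition on $x_i$, split off the deterministic diagonal term $\tfrac{1}{n}(1-\rho_\epsilon(x_i))$, and apply Bernstein's inequality to the remaining $n-1$ i.i.d.\ summands with the same variance computation as in Lemma~\ref{lemma:M1}. The only cosmetic difference is that the paper treats the two tails slightly asymmetrically (and disposes of the factor $(n-1)/n$ by monotonicity rather than by folding it into the $O(\epsilon^{\min\{1,d/2\}})$ term), whereas you handle both tails uniformly by invoking the hypothesis $n(\pi\epsilon)^{d/2}\rho(x_i)\epsilon^2\alpha\gg 1$ to absorb the $O(n^{-1})$ diagonal contribution into the threshold.
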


\begin{proof} We will use subscripts to denote the conditional probability:
\begin{align}
    \mathbb{P}_{x_i}\left(\cdot\right) := \mathbb{P}\left(\cdot \middle| x_i \right)
\end{align}
Under the distribution conditioned on $x_i$, for $1\le j\le n$, we define $X_j$ as in \eqref{eq:X_j}. For any $j\neq i$, $X_j$ is a random variable, while for $j=i$, $X_i$ is a deterministic number. Moreover, for $\epsilon$ small enough, 
\begin{equation}
    \label{eq:X_i}
    X_i = \frac{1}{n}\left(k_{\epsilon}(x_i,x_i)-\rho_{\epsilon}(x_i)\right) = \frac{1}{n} \left(1-(\pi\epsilon)^{d/2}\left[\rho(x_i)+O(\epsilon)\right]\right)\lessapprox \frac{1}{n}.
\end{equation}
In particular, $X_i > 0$ for $\epsilon$ small enough.
Therefore, we get
\begin{align}
    \mathbb{P}_{x_i}\left(\rho^{(n)}_{\epsilon}(x_i) - \rho_{\epsilon}(x_i) \ge t\right) &= 
    \mathbb{P}_{x_i}\left( \sum_{j\neq i}X_j  \ge t - X_i\right)
    <\mathbb{P}_{x_i}\left( \sum_{j\neq i}X_j  \ge t\right) \notag\\
    &\le \exp\left[-\frac{t^2}{2(n-1){\sf Var}(X_j) +\tfrac{2}{3n}t}\right].    \label{eq:Bernstein3}
\end{align}
We choose $t$ of the form \eqref{eq:tchoice} so that the relative error \eqref{eq:rel_errM1} is small in comparison with $\epsilon$. Plugging \eqref{eq:tchoice} and \eqref{eq:VarX_jfinal} into \eqref{eq:Bernstein3} we obtain
\begin{align}
\mathbb{P}_{x_i}\left(\rho^{(n)}_{\epsilon}(x_i) - \rho_{\epsilon}(x_i) \ge (\pi\epsilon)^{d/2}\rho(x_i)\epsilon^{2}\alpha\right) \le
\exp\left[-\frac{n(2\pi)^{d/2}\rho(x)\epsilon^{4+d/2}\alpha^2}{2\frac{n-1}{n}\left[1 + O(\epsilon^{\min\{1,d/2\}})\right]}\right]. \label{eq:Bernstein31}
\end{align}
The denominator in the fraction in the exponential in \eqref{eq:Bernstein31} can be increased by replacing the factor of $\tfrac{n-1}{n}$ with 1. Then the argument of the exponential becomes slightly less negative and hence the exponential increases amplifying the inequality. Therefore, we get
\begin{align}
\mathbb{P}_{x_i}\left(\rho^{(n)}_{\epsilon}(x_i) - \rho_{\epsilon}(x_i) \ge (\pi\epsilon)^{d/2}\rho(x_i)\epsilon^{2}\alpha\right) \le
\exp\left[-\frac{n(2\pi)^{d/2}\rho(x_i)\epsilon^{4+d/2}\alpha^2}{2 + O(\epsilon^{\min\{1,d/2\}})}\right]. \label{eq:Bernstein32}
\end{align}

Next, as in the proof of Lemma \ref{lemma:M1}, we redefine $X_j$ as $-X_j$, $1\le j\le n$. Then $X_i \in (-n^{-1},0)$ and $X_i\gtrapprox -n^{-1}$. Therefore,
\begin{align}
    \mathbb{P}_{x_i}\left(\rho^{(n)}_{\epsilon}(x_i) - \rho_{\epsilon}(x_i) \le -t\right) &=  \mathbb{P}_{x_i}\left( \sum_{j\neq i}X_j  \ge t - X_i\right)<  \mathbb{P}_{x_i}\left( \sum_{j\neq i}X_j  \ge t + \frac{1}{n}\right)\notag\\
    &\le  \exp\left[-\frac{(t+n^{-1})^2}{2(n-1){\sf Var}(X_j) +\tfrac{2}{3n}(t+n^{-1})}\right].    \label{eq:Bernstein4}
\end{align}
We observe that the assumption $n(\pi\epsilon)^{d/2}\rho(x_i)\epsilon^2\alpha\gg 1$ implies that for $t$ of the form \eqref{eq:tchoice} $t\gg n^{-1}$. Therefore, using similar reasoning as in the proof of Lemma \ref{lemma:M1} we calculate:
\begin{align}
&\mathbb{P}_{x_i}\left(\rho^{(n)}_{\epsilon}(x_i) - \rho_{\epsilon}(x_i)< - (\pi\epsilon)^{d/2}\rho(x_i)\epsilon^{2}\alpha\right) 
\le\notag \\
&\exp\left[-\frac{\left[(\pi\epsilon)^{d/2}\rho(x_i)\epsilon^{2}\alpha+\tfrac{1}{n}\right]^2}{\frac{2(n-1)}{n^2} \left[\left(\frac{\pi \epsilon}{2}\right)^{d/2}\left(\rho(x_i)  + O(\epsilon^{\min\{1,d/2\}})\right)\right] +\tfrac{2}{3n}\left[(\pi\epsilon)^{d/2}\rho(x_i)\epsilon^{2}\alpha+\tfrac{1}{n}\right]}\right]. \label{eq:Bernstein5}
\end{align}
The right-hand side of inequality \eqref{eq:Bernstein5} can be simplified by making use of the following observations. First, the last term in square brackets in the denominator is $(\pi\epsilon)^{d/2}o(\epsilon)$ and hence it can be incorporated into $ O(\epsilon^{\min\{1,d/2\})}$. Second, $\exp(-(\cdot))$ is a decreasing function of $(\cdot)$. Hence, if we remove $\tfrac{1}{n}$ from the enumerator we increase the exponent and amplify inequality \eqref{eq:Bernstein5}. Finally, we can amplify the inequality by replacing removing the factor of $\tfrac{n-1}{n}$. As a result, we obtain:
\begin{align}
\mathbb{P}_{x_i}\left(\rho^{(n)}_{\epsilon}(x_i) - \rho_{\epsilon}(x_i)< - (\pi\epsilon)^{d/2}\rho(x_i)\epsilon^{2}\alpha\right) 
\le
\exp\left[-\frac{n(2\pi)^{d/2}\rho(x_i)\epsilon^{4+d/2}\alpha^2}{2  + O(\epsilon^{\min\{1,d/2\}})}\right]. \label{eq:Bernstein6}
\end{align}

The right-hand sides of inequalities \eqref{eq:Bernstein32} and \eqref{eq:Bernstein6} are identical. Together, they imply the final result \eqref{eq:lemmaM2}.
\end{proof}

Now we are ready to prove Theorem \ref{thm:M1}.
\begin{proof}
    Let $\mathcal{X}(n)$ be a point cloud sampled according to the density $\rho$. By Lemmas \ref{lemma:M1} and \ref{lemma:M2}, for any fixed point $x\in \mathcal{M}$ or any point $x\in\mathcal{X}(n)$, for any $\epsilon$ small enough, any $\alpha = o(\epsilon^{-1})$, and any $n$ large enough so that $\pi^{d/2}n\epsilon^{2+d/2}\alpha\gg 1$, inequality \eqref{eq:lemmaM1} holds. This inequality can be amplified by replacing $\rho(x)$ with $\rho^{-}$ since this makes the argument of the exponential less negative and hence increases the exponential. As a result, the following uniform bound for all $x\in\mathcal{M}$ holds:
\begin{equation}
\label{eq:Bernstein7}
    \mathbb{P}_x\left(\left|\rho^{(n)}_{\epsilon}(x) - \rho_{\epsilon}(x)\right| \geq (\pi\epsilon)^{d/2}\rho(x)\epsilon^{2}\alpha\right) 
\le
2\exp\left[-\frac{n(2\pi)^{d/2}\rho(x)\epsilon^{4+d/2}\alpha^2}{2  + O(\epsilon^{\min\{1,d/2\}})}\right]. 
\end{equation}
Here 
\begin{align}
    \mathbb{P}_x := \mathbb{P}(\cdot \mid x)
\end{align}
whether $x \in \mathcal{M}$ or $x := x_i \in \mathcal{X}(n)$.  If $x \in \mathcal{M}, x \notin \mathcal{X}(n)$ then $\mathbb{P}_{x}(\cdot) = \mathbb{P}(\cdot)$. Now we will find $\alpha$ such that the right-hand side of inequality \eqref{eq:Bernstein7} is less or equal to $n^{-4}$. We set
    \begin{equation}
    \label{eq:find_alpha1}
    2\exp\left[-\frac{n(2\pi)^{d/2}\rho(x)\epsilon^{4+d/2}\alpha^2}{2  + O(\epsilon^{\min\{1,d/2\}})}\right] \le n^{-4}.
    \end{equation}
   Taking logarithms of both sides of \eqref{eq:find_alpha1} and rearranging terms gives the following bound for $\alpha$:
   \begin{equation}
       \label{eq:find_alpha2}
       \alpha \ge \sqrt{\frac{\left(4\log n+\log 2\right)\left(2+ O(\epsilon^{\min\{1,d/2\}})\right)}{n(2\pi)^{d/2}\rho(x)\epsilon^{4+d/2}}}.
   \end{equation}
   Bound \eqref{eq:find_alpha2} shows that for $\epsilon$ small enough and $n$ large enough,
   \begin{equation}
       \label{eq:alpha_choice}
       \alpha = \frac{5}{(2\pi)^{d/4}}\sqrt{\frac{\log n}{n\rho(x)\epsilon^{4+d/2}}}
   \end{equation}
   satisfies inequality \eqref{eq:find_alpha2}. 

Now we observe that 
$$
(\pi\epsilon)^{d/2}\rho(x)\epsilon^2\alpha = (\pi\epsilon)^{d/2}\rho^{1/2}(x)\epsilon^2\frac{5}{(2\pi)^{d/4}}\sqrt{\frac{\log n}{n\epsilon^{4+d/2}}}.
$$
We redefine $\alpha$ as
   \begin{equation}
       \label{eq:alpha_choice1}
       \alpha = \frac{1}{(2\pi)^{d/4}}\sqrt{\frac{\log n}{n\epsilon^{4+d/2}}}
   \end{equation}
and obtain the desired inequality \eqref{eq:thmM1} for each point $x_i$. To move from conditional probabilities to overall probabilities, we let $E(x_i)$ be the event that \eqref{eq:thmM1} holds. Moreover, let $\mathbb{I}_{E(x_i)}$ be the indicator function of this event. Then using the tower of expectations, 
\begin{align}
    \mathbb{P}(E(x_i)) = \mathbb{E}[\mathbb{I}_{E(x_i)}] = \mathbb{E}_{x_i}[\mathbb{E}[\mathbb{I}_{E(x_i)} \mid x_i]] \leq  \mathbb{E}_{x_i}[n^{-4}] = n^{-4}. 
\end{align}
Let 
\begin{align}
    E = \bigcap_{i=1}^{n}E(x_i)^c. 
\end{align}
Then 
\begin{align}
    \mathbb{P}(E) &= \mathbb{P}\left(\bigcap_{i=1}^{n}E(x_i)^c\right) 
    = 1 - \mathbb{P}\left(\bigcup_{i=1}^{n}E(x_i)\right) \notag \\
    &\geq 1 - \sum_{i=1}^{n}\mathbb{P}(E(x_i)) = 1 - n\cdot n^{-4} = 1 - n^{-3}.
\end{align}
In this event $E$, \eqref{eq:thmM1} holds for every $x_i \in \mathcal{X}(n)$ or $x \in \mathcal{M}$ as desired. It remains to check that $\alpha$ given by \eqref{eq:alpha_choice1} is $o(\epsilon^{-1})$, i.e.,
   \begin{equation}
   \label{eq:alpha_check} 
   \lim_{\substack{\epsilon\rightarrow0\\n\rightarrow\infty}}\alpha\epsilon 
   =0.
   \end{equation}
   This is so due to the condition \eqref{eq:thmM1_epsn}. Indeed, by   \eqref{eq:thmM1_epsn},
   \begin{equation}
       \label{eq:alpha_check1}
       \alpha^2\epsilon^2\propto \frac{\log n}{n\epsilon^{2+d/2}} \rightarrow 0.
   \end{equation}
   Hence $\alpha\epsilon\rightarrow 0$ as desired. 

The claim of Theorem \ref{thm:M1} readily follows.
\end{proof}

\subsubsection{Bound for the discrepancy $L_{\epsilon,\mu}^{(n)}f$ and $L_{\epsilon,\mu}f$}
\label{sec:proofThm10}
Theorem \ref{thm:M2} bounds the discrepancy between the discrete TMDmap generators with discrete and continuous KDEs. Its proof is given below.
\begin{proof}
Equation \eqref{eq:thmM2_L} implies that
\begin{equation}
    \label{eq:LPn}
    \left|\left[L^{(n)}_{\epsilon,\mu}f\right](x) -\left[L_{\epsilon,\mu}f\right](x)  \right| = \frac{1}{\epsilon} \left|\left[P^{(n)}_{\epsilon,\mu}f\right](x)- \left[P_{\epsilon,\mu}f\right](x)  \right|.
\end{equation}
Therefore, we proceed to calculate the difference between the discrete Markov operators:
\begin{align}
    &\left[P_{\epsilon,\mu}f\right](x)- \left[P^{(n)}_{\epsilon,\mu}f\right](x)  =   \frac{\sum_{j=1}^n F_j}{\sum_{j=1}^n G_j} - \frac{\sum_{j=1}^n F^{(n)}_j}{\sum_{j=1}^n G^{(n)}_j} =\notag \\
    &   \frac{\left(\sum_{j=1}^n F_j\right)\left(\sum_{l=1}^n G^{(n)}_l \right) - \left(\sum_{j=1}^n F^{(n)}_j\right)\left( \sum_{l=1}^n G_l\right)}{\left(\sum_{j=1}^n G_j\right)\left(\sum_{l=1}^n G^{(n)}_l \right)}= \notag\\
    & \frac{\sum_{j,l}\left( F_jG^{(n)}_l - F_jG_l + F_j G_l -F^{(n)}_jG_l\right)}{\sum_{j,l}G_jG^{(n)}_l} = \notag \\
    &\frac{\sum_{j,l}\left( F_j\left(G^{(n)}_l - G_l\right) + G_l\left(F_j  -F^{(n)}_j\right)\right)}{\sum_{j,l}G_jG^{(n)}_l}. \label{eq:Pndiff1}
\end{align}
The differences $G^{(n)}_l - G_l$ and $F_j  -F^{(n)}_j$ in \eqref{eq:Pndiff1}  can be bounded using Theorem \ref{thm:M1}.
We start with
\begin{align}
   \left|G^{(n)}_l - G_l\right| = \left|\frac{k_{\epsilon}(x,x_l)\mu^{1/2}(x_l)}{\rho_{\epsilon}^{(n)}(x_l)} - 
    \frac{k_{\epsilon}(x,x_l)\mu^{1/2}(x_l)}{\rho_{\epsilon}(x_l)}\right|=
    k_{\epsilon}(x,x_l)\mu^{1/2}(x_l)\frac{\left|\rho_{\epsilon}(x_l) - \rho_{\epsilon}^{(n)}(x_l) \right|}{\rho_{\epsilon}(x_l)\rho_{\epsilon}^{(n)}(x_l)}. \label{eq:Gndiff}
\end{align}
By Theorem \ref{thm:M1}, the probability of the event $E$ that the absolute value of the difference in the numerator in \eqref{eq:Gndiff} for any $x_l\in\mathcal{X}(n)$ is less than $5(\pi\epsilon)^{d/2}\rho^{1/2}(x_l)\epsilon^2\alpha$  is greater or equal to $1-n^{-3}$. In particular, since $\alpha$ given by \eqref{eq:thmM1_alpha} is $o(\epsilon^{-1})$, in the case of event $E$, this difference is $(\pi\epsilon)^{d/2}o(\epsilon)$. Recalling that
$$
\rho_{\epsilon}(x_l) = (\pi\epsilon)^{d/2}\left(\rho(x_l) + O(\epsilon)\right),
$$
the difference \eqref{eq:Gndiff} can be further bounded as follows:
\begin{align}
   \left|G^{(n)}_l - G_l\right| < k_{\epsilon}(x,x_l)\mu^{1/2}(x_l) \frac{5(\pi\epsilon)^{d/2}\rho^{1/2}(x_l)\epsilon^2\alpha}{(\pi\epsilon)^d\rho^2(x_l)(1+O(\epsilon))} = \frac{5k_{\epsilon}(x,x_l)\mu^{1/2}(x_l)\epsilon^2\alpha}{(\pi\epsilon)^{d/2}\rho^{3/2}(x_l)}(1+O(\epsilon)).
   \label{eq:Gndiff1}
\end{align}
Similarly, 
the difference $F_j - F^{(n)}_j$ is bounded by 
\begin{align}
   \left|F_j - F^{(n)}_j\right| <  \frac{5k_{\epsilon}(x,x_l)\mu^{1/2}(x_j){|f(x_j)|}\epsilon^2\alpha}{(\pi\epsilon)^{d/2}\rho^{3/2}(x_j)}(1+O(\epsilon)).
   \label{eq:Fndiff1}
\end{align}
Therefore we have:
\begin{align}
  & \left| \left[P_{\epsilon,\mu}f\right](x)- \left[P^{(n)}_{\epsilon,\mu}f\right](x) \right| \le
   \frac{\sum_{j,l}\left( |F_j|\left|G^{(n)}_l - G_l\right| + G_l\left|F_j  -F^{(n)}_j\right|\right)}{\sum_{j,l}G_jG^{(n)}_l}<\notag \\
   &\frac{\sum_{j,l}5\left( \frac{k_{\epsilon}(x,x_j)\mu^{1/2}(x_j){|f(x_j)|}}{(\pi\epsilon)^{d/2}\rho(x_j)} \frac{k_{\epsilon}(x,x_l)\mu^{1/2}(x_l)\epsilon^2\alpha}{(\pi\epsilon)^{d/2}\rho^{3/2}(x_l)} + \frac{k_{\epsilon}(x,x_l)\mu^{1/2}(x_l)}{(\pi\epsilon)^{d/2}\rho(x_l)}\frac{k_{\epsilon}(x,x_j)\mu^{1/2}(x_j){|f(x_j)|}\epsilon^2\alpha}{(\pi\epsilon)^{d/2}\rho^{3/2}(x_j)}\right)}{\sum_{j,l}\frac{k_{\epsilon}(x,x_l)k_{\epsilon}(x,x_j)\mu^{1/2}(x_l)\mu^{1/2}(x_j)}{(\pi\epsilon)^d\rho(x_l)\rho(x_j)}}\left(1+O(\epsilon)\right) =\notag \\
   & \frac{\sum_{j,l}5\frac{k_{\epsilon}(x,x_j)k_{\epsilon}(x,x_l)\mu^{1/2}(x_j)\mu^{1/2}(x_l){|f(x_j)|}}{\rho(x_j)\rho(x_l)}\left( \rho^{-1/2}(x_l) +\rho^{-1/2}(x_j)\right)}
   {\sum_{j,l}\frac{k_{\epsilon}(x,x_l)k_{\epsilon}(x,x_j)\mu^{1/2}(x_l)\mu^{1/2}(x_j)}{\rho(x_l)\rho(x_j)}}\epsilon^2\alpha\left(1+O(\epsilon)\right)
   \label{eq:Pndiff2}
\end{align}
Equations \eqref{eq:Pndiff2} and \eqref{eq:LPn} imply that
$$
\left|\left[L^{(n)}_{\epsilon,\mu}f\right](x) -\left[L_{\epsilon,\mu}f\right](x)  \right| < \tilde{C}\epsilon\alpha(1+O(\epsilon)),
$$
where the constant $\tilde{C}$ is the prefactor in \eqref{eq:Pndiff2}.
Now it remains to estimate $\tilde{C}$. Approximating the sums in the numerator and the denominator of $\tilde{C}$ we obtain
\begin{align}
     \tilde{C} =& 5\left(\left[\int_{\mathcal{M}}k_{\epsilon}(x,y)\mu^{1/2}(y){|f(y)|}d{\rm vol}(y)\right]\left[ \int_{\mathcal{M}}k_{\epsilon}(x,y)\frac{\mu^{1/2}(y)}{\rho^{1/2}(y)}d{\rm vol}(y)\right] \right. +\notag \\ 
    &\left.\left[\int_{\mathcal{M}}k_{\epsilon}(x,y)\frac{\mu^{1/2}(y)}{\rho^{1/2}(y)}{|f(y)|}d{\rm vol}(y)\right]\left[ \int_{\mathcal{M}}k_{\epsilon}(x,y)\mu^{1/2}(y)d{\rm vol}(y)\right]
    \right)/ \notag \\
    &\left(
    \left[ \int_{\mathcal{M}}k_{\epsilon}(x,y)\mu^{1/2}(y)d{\rm vol}(y)\right]^2
    \right) + O(n^{-1/2}).    \label{eq:tildeC1}
\end{align}
Further, the integrals in \eqref{eq:tildeC1} can be approximated using Expansion Lemma \ref{lem: expansion} resulting at
\begin{align}
    \tilde{C}& = 5\frac{\left(\frac{\mu(x){|f(x)|}}{\rho^{1/2}(x)} + O(\epsilon)\right) + 
    \left(\frac{\mu(x){|f(x)|}}{\rho^{1/2}(x)} + O(\epsilon)\right)
    }{\left(\mu^{1/2}(x) + O(\epsilon)\right)^2} + (n^{-1/2})\notag \\
    &= 10{|f(x)|}\rho^{-1/2}(x) + O(\epsilon) + O(n^{-1/2}). \label{eq:Ctilde_proofThm10}
\end{align}
{ At the points where $|f(x)|$ is nonsmooth, bound \eqref{eq:Ctilde_proofThm10} is obtained using any mollification of $|f(x)|$ that is greater or equal than $|f(x)|$. The difference between $|f(x)|$ and its mollification is incorporated in $O(\epsilon)$ (see Appendix C.4 in \cite{evans10}).}
Denoting $10|f(x)|\rho^{-1/2}(x)$ by $C$ and noting that $\epsilon\rightarrow 0$ and $n\rightarrow \infty$ we complete the proof.
\end{proof}

\subsubsection{Bound for the discrepancy between $L_{\epsilon,\mu}$ and $\mathcal{L}_{\epsilon,\mu}$}
\label{sec:proofThm11}
Theorem \ref{thm:M3} bounds the discrepancy between the discrete and continuous TMDmap generators with continuous KDE.
The proof of Theorem \ref{thm:M3} will make use of several lemmas. The first two lemmas propose random variables $Y_j$ and $Z_j$ that facilitate the proof of Theorem \ref{thm:M3} via the application of Bernstein's inequality \eqref{eq:Bernstein0}. 
\begin{lemma}
    \label{lemma:M3}
    Let $a > 0$ be any positive real number and $Y_j$ be defined by
    \begin{equation}
        \label{eq:Ydef}
        Y_j = F_jm_G -(m_F+a\epsilon m_G)G_j + a\epsilon m_G^2.
    \end{equation}
    Let $n\rightarrow\infty$ and $\epsilon\rightarrow0$ so that the condition \eqref{eq:thmM3_en} holds. 
Then the following are true:
\begin{enumerate}
    \item 
    \begin{equation}
        \mathbb{E}[Y_j] = 0\quad\forall 1\le j\le n,~j\neq i;
    \end{equation}
\item 
\begin{equation}
    \label{eq:Yi}
Y_i = \frac{\mu^{1/2}(x_i)}{\rho(x_i)} O(\epsilon^{1-d/2})\quad{\rm and}\quad\frac{\log n}{n\epsilon^3}Y_i = o(1);
\end{equation}
\item
    \begin{equation}
        \label{lemmaM3_i1}
        \mathbb{P}\left(\left[L_{\epsilon,\mu}f\right](x_i) -\left[\mathcal{L}_{\epsilon,\mu}f\right](x_i) \ge a\right) =
        \mathbb{P}\left( \sum_{j\neq i} Y_j \ge n\epsilon a m_G^2 -Y_i\right);
    \end{equation}
   \end{enumerate}
\end{lemma}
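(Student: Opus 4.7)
The proof naturally splits into the three claims, each following from direct algebra combined with the expansion results from Lemma \ref{lem: expansion} and the proof of Theorem \ref{thm: bias error}.

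For claim 1, I would apply linearity of expectation. For $j \neq i$, the point $x_j$ is drawn from $\rho$ independently of $x_i$, so $\mathbb{E}[F_j] = m_F$ and $\mathbb{E}[G_j] = m_G$ directly from \eqref{F_j}--\eqref{G_j} and the definitions of $m_F$ and $m_G$. Substituting into the definition of $Y_j$ gives $m_F m_G - (m_F + a\epsilon m_G)m_G + a\epsilon m_G^2 = 0$.

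For claim 2, the key observation is that $k_\epsilon(x_i, x_i) = 1$, so $F_i = \mu^{1/2}(x_i)f(x_i)/\rho_\epsilon(x_i)$ and $G_i = \mu^{1/2}(x_i)/\rho_\epsilon(x_i)$. By Lemma \ref{lem: expansion}, $\rho_\epsilon(x_i)^{-1} = (\pi\epsilon)^{-d/2}\rho(x_i)^{-1}(1 + O(\epsilon))$, and by the expansions in the proof of Theorem \ref{thm: bias error} (equation \eqref{Km2} applied with $f$ and with $\mathbf{1}$), $m_F = \mu^{1/2}(x_i)f(x_i) + O(\epsilon)$ and $m_G = \mu^{1/2}(x_i) + O(\epsilon)$. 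The dominant $(\pi\epsilon)^{-d/2}$ contributions to $F_i m_G$ and $m_F G_i$ then agree exactly, so the difference reduces to $(\mu^{1/2}(x_i)/\rho(x_i))\cdot O(\epsilon^{1 - d/2})$. The remaining piece $a\epsilon(m_G^2 - m_G G_i)$ contributes at the same order, because $m_G = O(1)$ while $G_i$ scales as $(\mu^{1/2}(x_i)/\rho(x_i))\cdot O(\epsilon^{-d/2})$ and is multiplied by the extra $\epsilon$. Combining these pieces gives $Y_i = (\mu^{1/2}(x_i)/\rho(x_i))\cdot O(\epsilon^{1 - d/2})$. The asymptotic relation $\tfrac{\log n}{n\epsilon^{3}}Y_i = o(1)$ then reduces to $\log n/(n\epsilon^{2+d/2}) \to 0$, which follows from the growth hypothesis relating $n$ and $\epsilon$.

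For claim 3, I would rearrange the event algebraically. By \eqref{eq:LP_M3}, the event $\{[L_{\epsilon,\mu}f](x_i) - [\mathcal{L}_{\epsilon,\mu}f](x_i) \geq a\}$ is equivalent to $\sum_{j=1}^n F_j / \sum_{j=1}^n G_j \geq m_F/m_G + \epsilon a$. Since $\sum_{j=1}^n G_j$ and $m_G$ are strictly positive (being integrals and sums of positive quantities), cross-multiplication preserves the inequality and gives $\sum_{j=1}^n [F_j m_G - (m_F + \epsilon a m_G)G_j] \geq 0$, i.e. $\sum_{j=1}^n Y_j \geq n\epsilon a m_G^2$ after reinserting the $+a\epsilon m_G^2$ summands. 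Separating off the $j=i$ contribution yields $\sum_{j\neq i} Y_j \geq n\epsilon a m_G^2 - Y_i$, and each step is reversible. The only technical obstacle in the whole proof is the exact cancellation in claim 2: one must verify that $F_i m_G$ and $m_F G_i$ share the same $(\pi\epsilon)^{-d/2}$ leading coefficient so that the difference picks up a full factor of $\epsilon$. Once this cancellation is recorded, everything else is direct algebra.
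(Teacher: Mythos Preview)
Your proposal is correct and follows essentially the same approach as the paper: linearity of expectation for item~1, explicit evaluation of $Y_i$ via $k_\epsilon(x_i,x_i)=1$ together with the expansions of $\rho_\epsilon$, $m_F$, $m_G$ and the cancellation of the $(\pi\epsilon)^{-d/2}$ leading terms for item~2, and the chain of algebraic rearrangements of \eqref{eq:LP_M3} for item~3. Your explicit remark that $\sum_j G_j>0$ and $m_G>0$ justify the cross-multiplication, and that the steps are reversible, is a useful detail the paper leaves implicit.
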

\begin{proof}
    \begin{enumerate}
        \item 
        To prove item 1 of Lemma \ref{lemma:M3} we take the expectation of $Y_j$ and readily find that $\mathbb{E}[Y_j] = 0$ for $j\neq i$.
        \item 
       Let us calculate $Y_i$. Noting that $k_{\epsilon}(x_i,x_i) = 1$ we write
       \begin{equation}
           \label{eq:Yi1}
           Y_i = \frac{\mu^{1/2}(x_i)f(x_i)}{\rho_{\epsilon}(x_i)}m_G - \frac{\mu^{1/2}(x_i)}{\rho_{\epsilon}(x_i)}(m_F+a\epsilon m_G) + a\epsilon m_G^2,
       \end{equation}
where 
\begin{equation}
    \label{eq:rhoepsLM3}
    \rho_{\epsilon}(x_i) = (\pi\epsilon)^{d/2}\left(\rho(x_i) + O(\epsilon)\right)
\end{equation}
and, according to  \eqref{Km2}, $m_F$ and  $m_G$ are 
\begin{align}
m_F & = f\mu^{1/2} + \frac{\epsilon}{4}\left[\Delta \left[f\mu^{1/2}\right] -
 \left[f\mu^{1/2}\right]\left(\frac{\Delta \rho}{\rho}-\omega\right)\right] + O(\epsilon^2)\quad{\rm and} \label{mFL3}\\
 m_G &=\mu^{1/2} + \frac{\epsilon}{4}\left[\Delta \mu^{1/2}- \mu^{1/2}\left(\frac{\Delta \rho}{\rho}-\omega\right)\right] + O(\epsilon^2).\label{mGL3}
\end{align}
All functions in  \eqref{mFL3} and \eqref{mGL3} are evaluated at $x_i$.
    Plugging \eqref{mFL3} and \eqref{mGL3} into \eqref{eq:Yi1} and taking into account that $f$ and its derivatives are $O(1)$ we obtain
    \begin{align}
        Y_i& = \frac{\mu^{1/2}f}{\rho_{\epsilon}}\left(\mu^{1/2} + O(\epsilon)\right) -\frac{\mu^{1/2}}{\rho_{\epsilon}}\left(f\mu^{1/2} +O(\epsilon)\right) + O(\epsilon)  \notag \\
        &= \frac{\mu^{1/2}}{\rho_{\epsilon}}O(\epsilon) +O(\epsilon) = \frac{\mu^{1/2}}{\rho} O(\epsilon^{1-d/2}).
    \end{align}
    Since $n\rightarrow\infty$ and $\epsilon\rightarrow0$ in such a manner that $n\epsilon^{2+d/2}\rightarrow \infty$ by assumption, we conclude that $n^{-1}\epsilon^{-2-d/2}\log^{-1}n\rightarrow 0$. This means that $n^{-1}\epsilon^{-3}(\log n) Y_i = o(1)$ as desired.

\item 
We will use identity \eqref{eq:LP_M3} to prove item 3.
First, we will show how the random variable $Y$ comes about.    
We start with several rearrangements:
\begin{align}
   &\thinspace\mathbb{P}\left(\left[L_{\epsilon,\mu}f\right](x_i) -\left[\mathcal{L}_{\epsilon,\mu}f\right](x_i) \ge a\right)  =
   \mathbb{P}\left(\frac{\sum_{j=1}^n F_j}{\sum_{j=1}^n G_j}  - \frac{m_F}{m_G} >\epsilon a\right)
   \notag \\
  = &\thinspace\mathbb{P}\Bigg(\frac{\sum_{j=1}^{n}\left(F(x_j)m_G - m_F G(x_j)\right)}{\sum_{j=1}^{n} G(x_j)m_G} \ge  a\epsilon \Bigg)\notag \\
    =&\thinspace \mathbb{P}\Bigg(\sum_{j=1}^{n}\left(F(x_j)m_G - (m_F + a\epsilon m_G) G(x_j)\right)  \ge  0 \Bigg) \notag\\
    =&\thinspace \mathbb{P}\Bigg(\sum_{j=1}^{n}\left(F(x_j)m_G - (m_F + a\epsilon m_G) G(x_j) - a\epsilon m_G^2 + a\epsilon m_G^2\right) \ge  0 \Bigg) \notag \\
    =&\thinspace \mathbb{P}\Bigg(\sum_{j=1}^{n}\left(F(x_j)m_G - (m_F + a\epsilon m_G) G(x_j) + a\epsilon m_G^2\right)  \ge  na\epsilon m_G^2 \Bigg).\label{devY}
\end{align}
The terms in the left-hand side of inequality \eqref{devY} are $Y_j$, $1\le j\le n$. All of them are random variables given $x_i$ except for $Y_i$. We transfer it to the right-hand side and take  item 2 into account:
\begin{align}
    &\thinspace\mathbb{P}\left(\left[L_{\epsilon,\mu}f\right](x_i) -\left[\mathcal{L}_{\epsilon,\mu}f\right](x_i) \ge a\right) \notag \\
    =&\thinspace
    \mathbb{P}\Bigg(\sum_{j\neq i}\left(F(x_j)m_G - (m_F + a\epsilon m_G) G(x_j) + a\epsilon m_G^2\right)  \ge  na\epsilon m_G^2 - Y_i \Bigg).
\end{align}

    \end{enumerate}

\end{proof}

\begin{lemma}
    \label{lemma:M4}
    Let $a > 0$ be any positive real number and $Z_j$ be defined by
    \begin{equation}
        \label{eq:Zdef}
        Z_j = F_jm_G -(m_F-a\epsilon m_G)G_j - a\epsilon m_G^2.
    \end{equation}
    Let $n\rightarrow\infty$ and $\epsilon\rightarrow0$ so that the condition \eqref{eq:thmM3_en} holds. 
Then the following are true:
\begin{enumerate}
    \item 
    \begin{equation}
        \mathbb{E}[Z_j] = 0\quad\forall 1\le j\le n,~j\neq i;
    \end{equation}
\item 
\begin{equation}
    \label{eq:Zi}
Z_i = \frac{\mu^{1/2}(x_i)}{\rho(x_i)} O(\epsilon^{1-d/2})\quad{\rm and}\quad\frac{\log n}{n\epsilon^3}Z_i = o(1);
\end{equation}
\item
    \begin{equation}
        \label{lemmaM4_i1}
        \mathbb{P}\left(\left[L_{\epsilon,\mu}f\right](x_i) -\left[\mathcal{L}_{\epsilon,\mu}f\right](x_i) \le  -a\right) =
        \mathbb{P}\left( \sum_{j\neq i} Z_j \le  -n\epsilon a m_G^2 -Z_i\right).
    \end{equation}
   \end{enumerate}
\end{lemma}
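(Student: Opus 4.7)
The plan is to mirror the proof of Lemma \ref{lemma:M3} with the signs of the $a\epsilon$-terms flipped, treating this as the lower-tail counterpart needed so that, combined with Lemma \ref{lemma:M3}, the two-sided bound in Theorem \ref{thm:M3} can be obtained from Bernstein's inequality.

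For item~1, I would take the expectation of $Z_j$ for $j\neq i$, using $\mathbb{E}[F_j]=m_F$ and $\mathbb{E}[G_j]=m_G$, to get $\mathbb{E}[Z_j]=m_F m_G - (m_F-a\epsilon m_G)m_G - a\epsilon m_G^2 = 0$. For item~2, I would substitute $j=i$ and use $k_\epsilon(x_i,x_i)=1$ to write
\[
Z_i = \frac{\mu^{1/2}(x_i)}{\rho_{\epsilon}(x_i)}\bigl(f(x_i)m_G - m_F\bigr) + a\epsilon m_G\bigl(G_i - m_G\bigr),
\]
then plug in the asymptotic expansions \eqref{mFL3}--\eqref{mGL3} and $\rho_\epsilon(x_i)=(\pi\epsilon)^{d/2}(\rho(x_i)+O(\epsilon))$. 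The first bracket is $O(\epsilon)$ because the $O(1)$ contributions $f\mu^{1/2}-f\mu^{1/2}$ cancel, while $G_i = O(\epsilon^{-d/2})$ makes the second summand $O(\epsilon^{1-d/2})\mu^{1/2}/\rho$. Combining gives $Z_i=\frac{\mu^{1/2}(x_i)}{\rho(x_i)}O(\epsilon^{1-d/2})$. The second claim $(\log n)/(n\epsilon^3)\,Z_i = o(1)$ follows from condition \eqref{eq:thmM3_en}, exactly as in Lemma \ref{lemma:M3}.

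For item~3, I would start from \eqref{eq:LP_M3} and chain equivalent rearrangements, being careful with the direction of the inequality under multiplication by the positive quantity $m_G\sum_j G_j$:
\begin{align*}
\mathbb{P}\!\left(\tfrac{\sum F_j}{\sum G_j}-\tfrac{m_F}{m_G}\le -\epsilon a\right)
&= \mathbb{P}\!\left(\sum\nolimits_j \bigl(F_j m_G - m_F G_j\bigr)\le -\epsilon a\, m_G\sum\nolimits_j G_j\right) \\
&= \mathbb{P}\!\left(\sum\nolimits_j \bigl(F_j m_G - (m_F-a\epsilon m_G)G_j\bigr)\le 0\right) \\
&= \mathbb{P}\!\left(\sum\nolimits_j Z_j \le -n\epsilon a\, m_G^2\right),
\end{align*}
after adding and subtracting $a\epsilon m_G^2$ inside each summand. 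Separating the deterministic term $Z_i$ yields \eqref{lemmaM4_i1}.

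I do not expect any genuine obstacle here: every step is the sign-flipped analogue of an already-proved computation in Lemma \ref{lemma:M3}. The only care point is to verify that multiplication by $m_G\sum_j G_j>0$ preserves (rather than reverses) the inequality, which it does since $G_j>0$ almost surely and $m_G>0$ for $\epsilon$ small enough, and that the same asymptotics for $Z_i$ hold despite the sign change in the $a\epsilon$-terms, which they do because those sign-dependent contributions are of lower order than the $O(\epsilon^{1-d/2})$ term that dominates.
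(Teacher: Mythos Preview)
Your proposal is correct and follows essentially the same approach as the paper, which explicitly states that items~1 and~2 are proved ``similar to those of Lemma~\ref{lemma:M3}'' and then carries out the same chain of rearrangements for item~3 that you describe. Your added remark about the positivity of $m_G\sum_j G_j$ when clearing the denominator is a welcome point of care that the paper leaves implicit.
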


 \begin{proof}
     The first two items of Lemma \ref{lemma:M4} are proven similar to those of Lemma \ref{lemma:M3}. Let us work out the proof of item 3:
\begin{align}
   &\thinspace\mathbb{P}\left(\left[L_{\epsilon,\mu}f\right](x_i) -\left[\mathcal{L}_{\epsilon,\mu}f\right](x_i) \le -a\right)  =
   \mathbb{P}\left(\frac{\sum_{j=1}^n F_j}{\sum_{j=1}^n G_j}  - \frac{m_F}{m_G} \le-\epsilon a\right)
   \notag \\
  = &\thinspace\mathbb{P}\Bigg(\frac{\sum_{j=1}^{n}\left(F(x_j)m_G - m_F G(x_j)\right)}{\sum_{j=1}^{n} G(x_j)m_G} \le -a\epsilon \Bigg) \notag\\
    =&\thinspace \mathbb{P}\Bigg(\sum_{j=1}^{n}\left(F(x_j)m_G - (m_F - a\epsilon m_G) G(x_j)\right)  \le 0 \Bigg)\notag \\
    =&\thinspace \mathbb{P}\Bigg(\sum_{j=1}^{n}\left(F(x_j)m_G - (m_F - a\epsilon m_G) G(x_j) - a\epsilon m_G^2\right)  \le -na\epsilon m_G^2 \Bigg).\label{devZ}
\end{align}
The terms in the left-hand side of inequality \eqref{devZ} are $Z_j$, $1\le j\le n$. All of them are random variables given $x_i$ except for $Z_i$. As in the proof of Lemma \ref{lemma:M3}, we transfer it to the right-hand side, take  item 2 into account, and obtain the desired result:
\begin{align}
    &\thinspace\mathbb{P}\left(\left[L_{\epsilon,\mu}f\right](x_i) -\left[\mathcal{L}_{\epsilon,\mu}f\right](x_i) \le-a\right) \notag \\
    =&\thinspace
    \mathbb{P}\left(\sum_{j\neq i}\left(F(x_j)m_G - (m_F - a\epsilon m_G) G(x_j) - a\epsilon m_G^2\right)  \le -na\epsilon m_G^2 - Z_i \right).
\end{align}
     
 \end{proof}

The next two lemmas give estimates of the variance of random variables $Y$ and $Z$ respectively.
\begin{lemma}
\label{lemma:M5}
Let $x\in\mathcal{M}$ be any selected point on the manifold $\mathcal{M}$.
Let $y\in \mathcal{M}$ be any point sampled according to the density $\rho$.
Let $Y$ and $Z$ be defined, respectively,  by \eqref{eq:Ydef} and \eqref{eq:Zdef} where $x_j$ is replaced with $y$. Then 
\begin{equation}
    \label{eq:varY}
    {\sf Var}(Y) ={\sf Var}(Z) = \frac{\epsilon}{4}(2\pi\epsilon)^{-d/2}\left(\frac{\mu^2}{\rho}\|\nabla f\|^2 + O(\epsilon)\right).
\end{equation}
\end{lemma}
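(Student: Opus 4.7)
My plan is to exploit the fact that $Y$ and $Z$ differ only by deterministic constants and a sign in the $O(\epsilon)$ correction, reduce the variance computation to the single random variable $W := m_G F - m_F G$, and compute $\mathbb{E}[W^2]$ directly using the identity $k_\epsilon^2 = k_{\epsilon/2}$ together with the expansions of $m_F$ and $m_G$ obtained in the proof of Theorem \ref{thm: bias error}. The hoped-for cancellations are already visible in $\mathbb{E}[W] = m_G m_F - m_F m_G = 0$.

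First, since $\pm a\epsilon m_G^2$ is deterministic, bilinearity of variance gives
\begin{equation*}
{\sf Var}(Y) = {\sf Var}(W) \mp 2a\epsilon m_G\,{\sf Cov}(W,G) + a^2\epsilon^2 m_G^2\,{\sf Var}(G),
\end{equation*}
with matched signs for $Z$. Using $k_\epsilon^2 = k_{\epsilon/2}$ and Lemma \ref{lem: expansion} applied to $k_{\epsilon/2}$, a direct estimate yields ${\sf Var}(G) = O(1)(2\pi\epsilon)^{-d/2}$. The key observation for the covariance is that $m_G\mathbb{E}[FG] - m_F\mathbb{E}[G^2]$ has its leading order term proportional to $\mu^{1/2}(x)\cdot\mu(x)f(x)/\rho(x) - \mu^{1/2}(x)f(x)\cdot\mu(x)/\rho(x) = 0$, so ${\sf Cov}(W,G) = O(\epsilon)(2\pi\epsilon)^{-d/2}$. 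Consequently both correction terms contribute at order $\epsilon^2(2\pi\epsilon)^{-d/2}$, which fits inside the claimed $O(\epsilon)$ relative error. This simultaneously yields ${\sf Var}(Y)={\sf Var}(Z)$ at the stated precision.

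Next I would compute ${\sf Var}(W) = \mathbb{E}[W^2]$. Writing
\begin{equation*}
\mathbb{E}[W^2] = \int_{\mathcal{M}} k_{\epsilon/2}(x,y)\,\frac{\mu(y)\rho(y)}{\rho_\epsilon^2(y)}\bigl(m_G f(y) - m_F\bigr)^2\,d{\rm vol}(y),
\end{equation*}
and substituting $\rho_\epsilon(y) = (\pi\epsilon)^{d/2}(\rho(y)+O(\epsilon))$ pulls out a prefactor $(\pi\epsilon)^{-d}$. From Theorem \ref{thm: bias error}, $m_G = \mu^{1/2}(x)+O(\epsilon)$ and $m_F = \mu^{1/2}(x)f(x)+O(\epsilon)$, so $m_G f(y) - m_F = \mu^{1/2}(x)(f(y)-f(x)) + O(\epsilon)$. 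Passing to normal coordinates at $x$ and Taylor-expanding $f(y)-f(x) = \nabla f(x)\cdot(y-x) + O(\|y-x\|^2)$, the leading contribution is a Gaussian second moment,
\begin{equation*}
\int_{\mathbb{R}^d} e^{-\|u\|^2/(\epsilon/2)}\,u^i u^j\,du = \tfrac{\epsilon}{4}\delta_{ij}\,(\pi\epsilon/2)^{d/2},
\end{equation*}
which combines with $\partial_i f(x)\partial_j f(x)\delta^{ij} = \|\nabla f(x)\|^2$ and the factor $\mu(x)^2/\rho(x)$ to give, after the simplification $(\pi\epsilon)^{-d}(\pi\epsilon/2)^{d/2} = (2\pi\epsilon)^{-d/2}$, exactly
\begin{equation*}
{\sf Var}(W) = \tfrac{\epsilon}{4}(2\pi\epsilon)^{-d/2}\Bigl(\tfrac{\mu^2}{\rho}\|\nabla f\|^2 + O(\epsilon)\Bigr).
\end{equation*}

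The main bookkeeping obstacle is confirming that every subleading piece of $(m_G f(y) - m_F)^2$ integrates to $O(\epsilon^2)(2\pi\epsilon)^{-d/2}$ rather than the naive $O(\epsilon^{3/2})(2\pi\epsilon)^{-d/2}$ one would expect from $\|y-x\|\sim\sqrt{\epsilon}$. The rescue is a parity argument: the cross term $2AB\epsilon(f(y)-f(x))$ has leading piece $\nabla f(x)\cdot(y-x)$ which is odd, so it pairs to zero against the even leading factor $\mu(x)/\rho(x)$ under the isotropic Gaussian $k_{\epsilon/2}$, and only contributes through even Taylor terms of $f$ or of $\mu/\rho$, each costing an extra factor of $\epsilon$. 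With this observation, all error terms are comfortably absorbed into the $O(\epsilon)$ appearing inside the parentheses of \eqref{eq:varY}, completing the proof.
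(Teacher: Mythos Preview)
Your proposal is correct. The decomposition via $W = m_G F - m_F G$ is in fact the same decomposition the paper arrives at after expanding $\mathbb{E}[Y^2]$ and grouping terms by powers of $a\epsilon$: the paper's three lines are precisely ${\sf Var}(W)$, $-2a\epsilon m_G\,{\sf Cov}(W,G)$, and $(a\epsilon)^2 m_G^2\,{\sf Var}(G)$, just without naming $W$.

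Where your argument genuinely differs is in evaluating ${\sf Var}(W)$. The paper computes $\mathbb{E}[F^2]$, $\mathbb{E}[G^2]$, $\mathbb{E}[FG]$ separately via Lemma~\ref{lem: expansion} applied to $k_{\epsilon/2}$, packages the $O(\epsilon)$ corrections into auxiliary operators $\mathcal{S}$ and $\mathcal{P}$, and then performs a Laplacian product-rule calculation
\[
\mu\Delta\!\left(\tfrac{f^2\mu}{\rho}\right) - 2f\mu\Delta\!\left(\tfrac{f\mu}{\rho}\right) + f^2\mu\Delta\!\left(\tfrac{\mu}{\rho}\right) = 2\tfrac{\mu^2}{\rho}\|\nabla f\|^2
\]
to extract the gradient norm. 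Your route bypasses this algebra by observing that $m_G f(y) - m_F = \mu^{1/2}(x)(f(y)-f(x)) + O(\epsilon)$ already vanishes at $y=x$, so the leading term of ${\sf Var}(W)$ is a direct Gaussian second moment of $\nabla f(x)\cdot u$, and $\|\nabla f\|^2$ appears immediately. Your parity argument for the $O(\epsilon^{3/2})$-looking cross terms is sound. The paper's approach has the advantage of using Lemma~\ref{lem: expansion} as a black box, so the normal-coordinate and curvature-correction bookkeeping is already encapsulated; your approach is shorter and makes the source of $\|\nabla f\|^2$ transparent, but you should note that the metric and volume corrections $\mathcal{C}^1_x$, $\mathcal{C}^2_x$ from Lemma~\ref{lem: expansion} enter as additional even $O(\|u\|^2)$ factors and are likewise absorbed into the $O(\epsilon)$ remainder.
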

\begin{proof}
We will calculate the variance of $Y$. The variance of $Z$ is calculated likewise.

    Let $F$ and $G$ be defined by \eqref{F_j} and \eqref{G_j} respectively where $x_j$ is replaced with $y$:
    \begin{equation}
        \label{eq:FGdef}
        F: = \frac{k_{\epsilon}(x,y)\mu^{1/2}(y)f(y)}{\rho_{\epsilon}(y)}\quad{\rm and}\quad G: = \frac{k_{\epsilon}(x,y)\mu^{1/2}(y)}{\rho_{\epsilon}(y)}.
    \end{equation}
    First, we find $Y^2$:
    \begin{align*}
Y^2 &= \left[m_GF -(m_F+a\epsilon  m_G)G + a\epsilon m_G^2\right]^2\\
&=m_G^2F^2 + (m_F^2+2a\epsilon  m_Gm_F+(a\epsilon )^2m_G^2)G^2 + (a\epsilon )^2 m_G^4 \\
& -2m_G(m_F+a\epsilon  m_G)GF - 2a\epsilon (m_F+a\epsilon m_G)m_G^2G + 2a\epsilon  m_G^3F.
\end{align*}
Then, taking into account that $\mathbb{E}[Y] = 0$, we get
\begin{align}
{\sf Var}(Y)& = \mathbb{E}[Y^2] 
=m_G^2\mathbb{E}[F^2] + (m_F^2+2a\epsilon  m_Gm_F+(a\epsilon )^2m_G^2)\mathbb{E}[G^2] + (a\epsilon )^2 m_G^4 \notag\\
& -2m_G(m_F+(a\epsilon ) m_G)\mathbb{E}[GF] - 2a\epsilon (m_F+a\epsilon m_G)m_G^3 + 2a\epsilon  m_G^3m_F \notag\\
& = m_G^2\mathbb{E}[F^2]  -2m_Gm_F\mathbb{E}[GF]+ m_F^2\mathbb{E}[G^2]\notag\\
&+2a\epsilon m_G\left[m_F\mathbb{E}[G^2] -m_G\mathbb{E}[GF]\right] \notag \\
& + (a\epsilon )^2m_G^2\left[\mathbb{E}[G^2] - m_G^2\right]. \label{eq:varY}
\end{align}
Hence, to compute ${\sf Var}(Y)$, we need to calculate expectations of $F^2$, $G^2$, and $FG$. As in the proof of Lemma \ref{lemma:M1}, we will use the fact that $k_{\epsilon}^2(x,y) = k_{\epsilon/2}(x,y)$.
We also will need the expansion of $\rho^{-2}_{\epsilon}$:
\begin{equation}
\label{rhoe2}
\rho_{\epsilon}^{-2}(x): = 
(\pi\epsilon)^{-d}\rho^{-2}(x)
\left[1 -\frac{\epsilon}{2}
\left(\frac{\Delta\rho(x)}{\rho(x)} -\omega(x)\right) + O(\epsilon^2)\right].
\end{equation}
We start with $\mathbb{E}[G^2]$:
\begin{align}
\mathbb{E}[G^2] &= \int_{\mathcal{M}}k_{\epsilon/2}(x,y)\frac{\mu(y)}{(\pi\epsilon)^{d}\rho^2(y)}\left[1 -\frac{\epsilon}{2}
\left(\frac{\Delta\rho(y)}{\rho(y)} -\omega(y)\right) + O(\epsilon^2)\right]\rho(y)dy  \notag\\
& =(\pi\epsilon)^{-d} \int_{\mathcal{M}}k_{\epsilon/2}(x,y)\frac{\mu(y)}{\rho(y)}\left[1 -\frac{\epsilon}{2}
\left(\frac{\Delta\rho(y)}{\rho(y)} -\omega(y)\right) + O(\epsilon^2)\right]dy \notag \\
& = (\pi\epsilon)^{-d}\left(\frac{\pi\epsilon}{2}\right)^{d/2}\left[ \frac{\mu}{\rho} + \frac{\epsilon}{8}\left(\Delta\left[\frac{\mu}{\rho}\right] - \omega\frac{\mu}{\rho}\right)
-\frac{\epsilon}{2}\frac{\mu}{\rho}\left(\frac{\Delta\rho}{\rho} -\omega\right) + O(\epsilon^2)\right]\notag \\
& = (2\pi\epsilon)^{-d/2}\left[ \frac{\mu}{\rho} + 
\frac{\epsilon}{8}\left(\Delta\left[\frac{\mu}{\rho}\right] - 4\frac{\mu}{\rho}\frac{\Delta \rho}{\rho} +3\omega\frac{\mu}{\rho}\right) + O(\epsilon^2)\right].
\end{align}
The expectations of $F^2$ and $FG$ are computed likewise. The expressions for these expectations can be written  compactly as
\begin{align}
\mathbb{E}[G^2] & =  (2\pi\epsilon)^{-d/2}\left[ \frac{\mu}{\rho} + \frac{\epsilon}{8}\mathcal{S}\left(\frac{\mu}{\rho},\rho\right)+O(\epsilon^2)\right],\label{EG2}\\
\mathbb{E}[F^2] & =   (2\pi\epsilon)^{-d/2}\left[ \frac{f^2\mu}{\rho} + \frac{\epsilon}{8}\mathcal{S}\left(\frac{f^2\mu}{\rho},\rho\right)+O(\epsilon^2)\right],\label{EF2}\\
\mathbb{E}[FG] & =  (2\pi\epsilon)^{-d/2}\left[ \frac{f\mu}{\rho} + \frac{\epsilon}{8}\mathcal{S}\left(\frac{f\mu}{\rho},\rho\right)+O(\epsilon^2)\right],\label{EFG}
\end{align}
where the operator $\mathcal{S}(\cdot,\cdot)$ is defined by
\begin{equation}
\label{Sdef}
\mathcal{S}(u,v): = \Delta u- 4u\frac{\Delta v}{v} +3\omega u.
\end{equation}
Further,  ${\sf Var}(Y^2)$ given by \eqref{eq:varY} contains $m_F$ and $m_G$. The expressions for $m_F$ and $m_G$ given by \eqref{mFL3} and \eqref{mGL3} respectively can be compactified via the introduction of the operator $\mathcal{P}(\cdot,\cdot)$ defined by
\begin{align}
\mathcal{P}(u,v):= \Delta u-u\frac{\Delta v}{v}+\omega u.
\end{align}
Then
\begin{align}
m_F & = f\mu^{1/2} + \frac{\epsilon}{4}\mathcal{P}(f\mu^{1/2},\rho) + O(\epsilon^2). \label{mF1}\\
m_G &=\mu^{1/2} + \frac{\epsilon}{4}\mathcal{P}(\mu^{1/2},\rho) + O(\epsilon^2),\label{mG1} 
\end{align}
Plugging \eqref{EF2}, \eqref{EG2}, \eqref{EFG}, \eqref{mF1}, and \eqref{mG1} into \eqref{eq:varY}, doing some algebra, and tracking only the highest-order terms in $\epsilon$ we obtain 
\begin{align} 
{\sf Var}(Y) =  
 \frac{\epsilon}{8}  (2\pi\epsilon)^{-d/2}\left\{\mu\mathcal{S}\left(\frac{f^2\mu}{\rho},\rho\right)
-2f\mu\mathcal{S}\left(\frac{f\mu}{\rho},\rho\right)
+f^2\mu\mathcal{S}\left(\frac{\mu}{\rho},\rho\right)+ 
O(\epsilon)\right\}. \label{EY2}
\end{align}
Remarkably, all operators $\mathcal{P}$ cancel out.
Next, we decode the operator $\mathcal{S}$ in \eqref{EY2} using \eqref{Sdef} and get the desired result:
\begin{align}
{\sf Var}(Y)  & =  \frac{\epsilon}{8}  (2\pi\epsilon)^{-d/2} 
\left\{ \mu\Delta  \left(\frac{f^2\mu}{\rho}\right) -
2 f\mu \Delta  \left(\frac{f\mu}{\rho}\right) + f^2\mu\Delta  \left(\frac{\mu}{\rho}\right) + O(\epsilon)\right\} \notag \\
& =   \frac{\epsilon}{8}  (2\pi\epsilon)^{-d/2} \left\{ \mu(\Delta  f^2)\frac{\mu}{\rho} +
2\mu\nabla f^2\cdot\nabla \frac{\mu}{\rho} + \mu f^2\Delta \frac{\mu}{\rho}\right. \notag \\
&\left.-2f\mu(\Delta  f)\frac{\mu}{\rho} -4f\mu\nabla f\cdot\nabla \frac{\mu}{\rho} - 2 \mu f^2\Delta \frac{\mu}{\rho} 
 +f^2\mu\Delta \frac{\mu}{\rho}+O(\epsilon) \right\} \notag \\
 & =  \frac{\epsilon}{8}  (2\pi\epsilon)^{-d/2} \left\{ \frac{\mu^2}{\rho}\left(2\|\nabla f\|^2 +2f\Delta f\right) +4\mu f\nabla f\cdot\nabla \frac{\mu}{\rho}
 \right.\notag \\
 &\left.-2f\mu(\Delta  f)\frac{\mu}{\rho} -4f\mu\nabla f\cdot\nabla \frac{\mu}{\rho}+O(\epsilon) \right\} \notag \\
 & = \frac{\epsilon}{4}  (2\pi\epsilon)^{-d/2}\left( \frac{\mu^2}{\rho}\|\nabla  f\|^2 + O(\epsilon)\right). \label{eq:varY_2}
 \end{align}
\end{proof}

%%%%%%%%%%%%%%%%%%%%%%%%%%%%%%%%%
%%%%%%%%%%%%%%%%%%%%%%%%%%%%%%%%%%
%%
%% THEOREM 6

Now we proceed to prove Theorem \ref{thm:M3}.
\begin{proof}
    By assumption, $\rho$ and $f$ are bounded. Therefore, the random variables $Y$ and $Z$ are bounded as well. After conditioning on the point $x_i$ we drop into the regime of Lemmas \ref{lemma:M3}, \ref{lemma:M4}, and \ref{lemma:M5}.  Let $|Y_j|\le M_Y$ and $|Z_j|\le M_Z$. 
    % Let us choose $a=\alpha\epsilon$. % where $\alpha = o(\epsilon^{-1}$).
    Then, according to Bernstein's inequality \eqref{eq:Bernstein0} we have:
    \begin{align}
    \label{eq:BM3_1}
         \mathbb{P}_{x_i}\left( \sum_{j\neq i} Y_j \ge na\epsilon m_G^2 -Y_i\right)\le 
         \exp\left(
         - \frac{(na\epsilon m_G^2 -Y_i)^2}{2(n-1){\sf Var}(Y) + 2M_Y(na\epsilon m_G^2 -Y_i)}
         \right)
    \end{align}
Let us simplify the argument of the exponent. According to \ref{lemma:M3},  $Y_i = O(\epsilon^{1-d/2})$. %Furthermore, assumption \eqref{eq:thmM3_en} implies that $n\epsilon^{2+d/2}=o(1)$. 
Therefore, the expression $na\epsilon m_G^2 -Y_i$ can be estimated as
\begin{align}
\label{eq:p_rhs1}
    na\epsilon m_G^2 -Y_i =na\epsilon m_G^2\left(1-\frac{O(\epsilon^{1-d/2})}{na\epsilon m_G^2}\right) =  na\epsilon m_G^2\left(1+o(\epsilon)\right)  
\end{align}
provided that $na\epsilon^{d/2} \rightarrow\infty$ as $\epsilon\rightarrow 0$ and $n\rightarrow \infty$. We will verify this condition later.
Plugging \eqref{eq:varY} and \eqref{eq:p_rhs1} into \eqref{eq:BM3_1} and canceling $n$ we obtain
\begin{align}
         &\mathbb{P}_{x_i}\left( \sum_{j\neq i} Y_j \ge na\epsilon m_G^2 -Y_i\right)\le\notag \\ 
         &\exp\left(
         - \frac{ na^2\epsilon^2 m_G^4\left(1+o(\epsilon)\right)}{2\frac{n-1}{n}\frac{\epsilon}{4}  (2\pi\epsilon)^{-d/2}\left( \frac{\mu^2}{\rho}\|\nabla f\|^2 + O(\epsilon)\right) + 2M_Ya\epsilon m_G^2\left(1+o(\epsilon)\right)}
         \right) = \notag \\
          &\exp\left(
         - \frac{ 2(2\pi)^{d/2}na^2\epsilon^{1+d/2} m_G^4\left(1+o(\epsilon)\right)}{ \frac{\mu^2}{\rho}\|\nabla f\|^2 + O(\epsilon) + o(1)} \right) =\notag \\
         &\exp\left(
         - \frac{ 2(2\pi)^{d/2}na^2\epsilon^{1+d/2} m_G^4}{ \frac{\mu^2}{\rho}\|\nabla f\|^2} \left(1+o(1)\right)\right). \label{eq:BM3_2}        
\end{align}
Similarly, the following bound for the sum of $Z_j$s is obtained:
\begin{align}
         &\mathbb{P}_{x_i}\left( \sum_{j\neq i} Z_j \le -na\epsilon m_G^2 -Z_i\right)\le\exp\left(
         - \frac{ 2(2\pi)^{d/2}na^2\epsilon^{1+d/2} m_G^4}{ \frac{\mu^2}{\rho}\|\nabla f\|^2} \left(1+o(1)\right)\right).
         \label{eq:BM3_3} 
\end{align}

Lemmas \ref{lemma:M3} and \ref{lemma:M4} and inequalities \eqref{eq:BM3_2} and \eqref{eq:BM3_3} imply that 
\begin{align}
    \mathbb{P}_{x_i}\Bigl(\Big|\left[L_{\epsilon,\mu}f\right](x_i) -\left[\mathcal{L}_{\epsilon,\mu}f\right](x_i)\Big| \ge a\Bigl)  \le 2\exp\left(
         - \frac{ 2(2\pi)^{d/2}na^2\epsilon^{1+d/2} m_G^4}{ \frac{\mu^2}{\rho}\|\nabla f\|^2} \left(1+o(1)\right)\right) \label{eq:BM3_4}
\end{align}
The next step is to find $\alpha$ such that the right-hand side of inequality \eqref{eq:BM3_4}  is less or equal that $n^{-3}$ as it was done in the proofs of Theorems \ref{thm:M1} and \ref{thm:M2}. Hence, we set
\begin{align}
    2\exp\left(
         - \frac{ 2(2\pi)^{d/2}na^2\epsilon^{1+d/2} m_G^4}{ \frac{\mu^2}{\rho}\|\nabla f\|^2} \left(1+o(1)\right)\right) \le n^{-3}.
\end{align}
Taking logarithms of both sides, recalling that $m_G = \mu^{1/2}(x_i) +O(\epsilon)$, and doing some algebra we obtain
\begin{equation}
    \label{eq:alpha1_thmM3}
    a \ge \frac{\|\nabla f\|}{\rho^{1/2}(x_i)(2\pi)^{d/4}}\sqrt{\frac{(3\log n + \log 2)}{2n\epsilon^{1+d/2}}}(1+o(1)).
\end{equation}
Hence, for large enough $n$ and small enough $\epsilon$ we can choose
\begin{equation}
    \label{eq:alpha2_thmM3}
    a = \frac{2\|\nabla f\|}{\rho^{1/2}(x_i)(2\pi)^{d/4}}\sqrt{\frac{\log n}{n\epsilon^{1+d/2}}}.
\end{equation}

{Let us check that the term in \eqref{eq:p_rhs1} is indeed $o(\epsilon)$. To do so, we check that $\epsilon na\epsilon^{d/2}\rightarrow\infty$ as $\epsilon\rightarrow 0$ and $n\rightarrow\infty$. Indeed, with the use of by assumption \eqref{eq:thmM3_en} we get
\begin{equation}
    \label{eq:nae}
    \epsilon na\epsilon^{d/2}\propto \sqrt{\frac{n^2\epsilon^{2 + d}\log n}{n\epsilon^{1+d/2}}} = \sqrt{n\epsilon^{1+d/2}\log n}
    \rightarrow\infty.
\end{equation}}
Finally, we observe that $a$ given by \eqref{eq:alpha2_thmM3} can be expressed as 
\begin{equation}
    \label{eq:alpha3_thmM3}
    a = \frac{2\|\nabla f\|}{\rho^{1/2}(x_i)}\epsilon^{3/2}\alpha,
\end{equation}
where $\alpha$ is given by \eqref{eq:thmM3_alpha}. This gives the desired result and completes the proof.
\end{proof}

%%%%%%%%%%%%%
\subsection{Numerical solution of BVPs: proofs}
\label{subsec: numerical solutions of bvp proofs}
Theorem \ref{thm: BVP error estimate} 
establishes an error bound for the numerical solution by the TMDmap algorithm to the general boundary-value problem (BVP) of the form \eqref{eq: general boundary value problem}.
The proof presented below utilizes a \emph{maximum principle}-based argument, specifically, { the \emph{method of comparison functions}} commonly used in stability estimates for finite difference methods {\color{blue} \cite{morton_mayers_2005}}. A closely related argument was proposed in \cite[p. 35]{jiang2023ghost} for \emph{Ghost Point Diffusion Maps}.

Under Assumptions 1-5, we consider the BVP \eqref{eq: general boundary value problem}. For cleaner notation, we set 
\begin{align}
    L &:= L^{(n)}_{\epsilon,\mu}, \\
    \mathcal{I} &:= \{i \mid x_i \in \mathcal{M} \setminus \Omega\}, \\
    \mathcal{D} &:= \{i \mid x_i \in \Omega \}. 
\end{align}

{To prove Theorem \ref{thm: BVP error estimate} we will need several lemmas.
\begin{lemma}[Discrete maximum principle]
\label{lem:max_principle}
Let $\mathcal{D} \neq \emptyset$ and $v =  [v^\mathcal{I} \ v^{\mathcal{D}}]^{\top} \in \mathbb{R}^n$.
\begin{enumerate}
    \item If $[Lv]_i \ge 0$ for all $i\in\mathcal{I}$ then $v_i$ reaches its maximum on $\mathcal{D}$ i.e.,
    \begin{equation}
    \label{eq:maxv}
        \max_{i\in\mathcal{I}\cup \mathcal{D}} v_i =  \max_{i\in \mathcal{D}} v_i. 
    \end{equation}
    \item If $[Lv]_i \le 0$  for all $i\in\mathcal{I}$ then $v_i$ reaches its minimum on $\mathcal{D}$, i.e.,
    \begin{equation}
    \label{eq:minv}
        \min_{i\in\mathcal{I}\cup\mathcal{D}} v_i  =   \min_{i\in \mathcal{D}} v_i. 
    \end{equation}
\end{enumerate}
\end{lemma}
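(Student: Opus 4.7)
The plan is to use the Markov structure of the TMDmap generator. Recall that by \eqref{Lmatr} and \eqref{Pmatr},
$L = \epsilon^{-1}(P^{(n)}_{\epsilon,\mu} - I)$, where $P \equiv P^{(n)}_{\epsilon,\mu}$ is a row-stochastic matrix whose entries are strictly positive because the Gaussian kernel \eqref{ker1} is positive on all of $\mathcal{M}\times\mathcal{M}$. Hence the condition $[Lv]_i \ge 0$ is equivalent to the mean-value inequality
\begin{equation*}
v_i \le [Pv]_i = \sum_{j=1}^n P_{ij}\, v_j, \qquad i\in\mathcal{I},
\end{equation*}
and $[Lv]_i\le 0$ is the reverse. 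Since each row of $P$ sums to $1$ and has positive entries, $[Pv]_i$ is a convex combination of all $v_j$'s that involves every index with positive weight; in particular it involves indices in $\mathcal{D}$ with positive weight whenever $\mathcal{D}\neq\emptyset$.

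The first step is to prove part (1) by contradiction. Assume $[Lv]_i\ge 0$ for all $i\in\mathcal{I}$, set $M:=\max_{i\in\mathcal{I}\cup\mathcal{D}} v_i$, and suppose for contradiction that $\max_{i\in\mathcal{D}} v_i < M$. Then there exists $i^{\ast}\in\mathcal{I}$ with $v_{i^{\ast}}=M$ and $v_j<M$ for every $j\in\mathcal{D}$. Splitting the sum defining $[Pv]_{i^{\ast}}$ into contributions from $\mathcal{I}$ and $\mathcal{D}$ and using $v_j\le M$ everywhere with strict inequality on $\mathcal{D}$ together with $P_{i^{\ast}j}>0$ for at least one $j\in\mathcal{D}$, I would obtain
\begin{equation*}
[Pv]_{i^{\ast}} \;=\; \sum_{j\in\mathcal{I}} P_{i^{\ast}j}\,v_j + \sum_{j\in\mathcal{D}} P_{i^{\ast}j}\,v_j \;<\; M\sum_{j=1}^n P_{i^{\ast}j} \;=\; M \;=\; v_{i^{\ast}}.
\end{equation*}
This yields $[Lv]_{i^{\ast}} = \epsilon^{-1}([Pv]_{i^{\ast}} - v_{i^{\ast}})<0$, contradicting the hypothesis. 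Consequently $M=\max_{i\in\mathcal{D}}v_i$, and \eqref{eq:maxv} follows.

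Part (2) I would deduce immediately by applying part (1) to $-v$: the inequality $[L(-v)]_i = -[Lv]_i \ge 0$ for $i\in\mathcal{I}$ gives $\max(-v) = \max_{\mathcal{D}}(-v)$, which is equivalent to \eqref{eq:minv}. The main conceptual obstacle is really only the strict positivity of the weights $P_{i^{\ast}j}$ for $j\in\mathcal{D}$; because the Gaussian kernel is strictly positive on $\mathcal{M}\times\mathcal{M}$, this holds without further hypotheses, and no irreducibility or connectivity argument is required. If the kernel were sparsified (as in $k$NN-based variants), one would instead need to argue along a chain of neighbors from $i^{\ast}$ to some $j\in\mathcal{D}$, but that complication does not arise in the present setting.
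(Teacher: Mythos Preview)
Your proof is correct and follows essentially the same approach as the paper: both argue by contradiction at an interior maximizer $i^{\ast}$, exploit that $L=\epsilon^{-1}(P-I)$ with $P$ row-stochastic and strictly positive entries (from the Gaussian kernel), and deduce part (2) from part (1) via $v\mapsto -v$. Your observation about sparsified kernels requiring a chain/irreducibility argument also matches the paper's remark following the lemma.
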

}
\begin{proof}
We will prove item 1. The proof of item 2 readily follows by redefining $v$ as $-v$.

By construction, $L = \epsilon^{-1}(P - I)$ where $P$ is a Markov matrix. All entries of any Markov matrix are nonnegative and row sums are 1. In our case, $P = P^{(n)}_{\epsilon,\mu}$. Its off-diagonal entries are positive. Hence, all off-diagonal entries of $L$ are positive and row sums of $L$ are zero. This means that $\sum_{j=1}^nL_{ij} = 0$ for all $i\in\mathcal{I}\cup\mathcal{D}$, and diagonal entries of $L$ are negative. 
To construct a contradiction, we assume that there is $i^{\ast} \in\mathcal{I}$ such that 
$$
v_{i^\ast}  = \max_{i\in \mathcal{I}\cup\mathcal{D}} v_i > \max_{i\in\mathcal{D}}v_i.
$$   
Since $i^\ast\in\mathcal{I}$, all entries $L_{i^\ast j} > 0$ for $j\in \mathcal{D}$ and for all $j\in\mathcal{I}\backslash\{i^{\ast}\}$. 
Therefore,
\begin{align}
    0\le \sum_{j\in\mathcal{I}\cup\mathcal{D}}L_{i^\ast j}v_j  = L_{i^{\ast}i^{\ast}} v_{i^{\ast}} + \sum_{j\neq i^{\ast} }L_{i^\ast j}v_j  < v_{i^{\ast}}\sum_{j\in\mathcal{I}\cup\mathcal{D}}L_{i^\ast j} = 0.
\end{align}
Hence, the maximum of $v$ must be reached of $\mathcal{D}$. This implies \eqref{eq:maxv}.
\end{proof}
\begin{remark}
    In practice, the matrix $L$ is often sparsified for computational efficiency. In this case, the discrete maximum principle in Lemma \ref{lem:max_principle} still holds. Its proof requires a slightly longer argument involving the irreducibility of $L$ as in the proof of Lemma 6.1 in \cite{morton_mayers_2005}. 
\end{remark}
\begin{lemma}[The maximum principle]
    \label{lem:max_principle_cont}
    \begin{enumerate}
        \item  Let $\mathcal{L}\phi\ge 0$ in $\mathcal{M}\backslash\Omega$ where $\mathcal{L}$ is the generator given by \eqref{eq: generator}. 
    Then the maximum of $\phi$ is reached on the boundary $\partial\Omega$, i.e.,
    \begin{equation}
        \label{eq:max_pr_cont}
        \max_{x\in\mathcal{M}\backslash\Omega} \phi = \max_{\partial\Omega}\phi.
    \end{equation}
\item  Let $\mathcal{L}\phi\le 0$ in $\mathcal{M}\backslash\Omega$ where $\mathcal{L}$ is the generator given by \eqref{eq: generator}. 
    Then the minimum of $\phi$ is reached on the boundary $\partial\Omega$, i.e.,
    \begin{equation}
        \label{eq:max_pr_cont}
        \min_{x\in\mathcal{M}\backslash\Omega} \phi = \min_{\partial\Omega}\phi.
    \end{equation}
    \end{enumerate}
   \end{lemma}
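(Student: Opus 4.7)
The plan is to invoke the classical weak maximum principle for second-order elliptic operators, applied to $\mathcal{L} = \beta^{-1}\Delta - \nabla V\cdot\nabla$ on $\overline{\mathcal{M}\setminus\Omega}$. Since $V$ is smooth and $\mathcal{M}$ is compact, $\mathcal{L}$ reduces in any normal-coordinate chart to a uniformly elliptic operator with smooth, uniformly bounded coefficients; by compactness we may cover $\overline{\mathcal{M}\setminus\Omega}$ by finitely many such charts. Part 2 follows from part 1 by replacing $\phi$ with $-\phi$ and using that $\mathcal{L}$ is linear, so I focus on part 1.

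I would argue by contradiction. Suppose the maximum of $\phi$ on $\overline{\mathcal{M}\setminus\Omega}$ is attained at an interior point $x^\ast\in\mathcal{M}\setminus\Omega$ rather than on $\partial\Omega$. At a smooth interior maximum, $\nabla\phi(x^\ast)=0$ and $\nabla^2\phi(x^\ast)$ is negative semi-definite in normal coordinates, so taking the trace gives $\Delta\phi(x^\ast)\le 0$. Hence $\mathcal{L}\phi(x^\ast)=\beta^{-1}\Delta\phi(x^\ast)-\nabla V(x^\ast)\cdot\nabla\phi(x^\ast)\le 0$. If $\mathcal{L}\phi>0$ held strictly on $\mathcal{M}\setminus\Omega$, this would contradict our hypothesis and the proof would be complete; since we only have $\mathcal{L}\phi\ge 0$, a standard perturbation step is needed.

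For the perturbation, fix normal coordinates $(x_1,\ldots,x_d)$ around $x^\ast$ and introduce the auxiliary comparison function $w(x)=e^{\lambda x_1}$, choosing $\lambda$ large enough that $\mathcal{L}w=\bigl(\beta^{-1}\lambda^2-\lambda\,\partial_1 V\bigr)e^{\lambda x_1}>0$ on a small coordinate neighborhood; this is possible because $\|\nabla V\|_\infty$ is bounded on compact $\mathcal{M}$. Extending $w$ smoothly to $\mathcal{M}$ and setting $\phi_\delta:=\phi+\delta w$ for $\delta>0$ gives $\mathcal{L}\phi_\delta>0$ strictly in the neighborhood, so the strict version of the interior-max argument rules out an interior maximum for $\phi_\delta$. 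A standard covering/connectedness argument propagates this to all of $\mathcal{M}\setminus\Omega$, forcing $\max\phi_\delta$ onto $\partial\Omega$; letting $\delta\to 0$ transfers the conclusion to $\phi$.

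The main obstacle is the global patching, i.e.\ ensuring that the strict inequality obtained locally implies that the max of $\phi_\delta$ is globally on $\partial\Omega$ rather than merely failing to occur at a single point. This is the content of the weak maximum principle and can be carried out by a finite chart cover exploiting compactness of $\mathcal{M}$ and smoothness of $V$; alternatively one may cite Theorem~3.5 of Gilbarg--Trudinger once the chart reduction is done, since the resulting local operator satisfies the standard structural hypotheses.
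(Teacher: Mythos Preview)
Your argument is correct: it is precisely the classical weak maximum principle proof (interior-maximum first/second-order conditions plus the $\delta e^{\lambda x_1}$ perturbation to handle the non-strict case), and you correctly note that part~2 follows from part~1 by linearity. The only mildly delicate point is the global step on a manifold, since $e^{\lambda x_1}$ is only defined in a chart and its smooth extension need not satisfy $\mathcal{L}w>0$ everywhere; you flag this and defer to a covering argument or to Gilbarg--Trudinger, which is fine.

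The paper, by contrast, does not give a proof at all: it simply states that this is a standard result for elliptic operators and cites Theorem~1 in Section~6.4.1 of Evans. So your proposal is not a different route but rather an unpacking of the very result the paper invokes by reference; there is nothing to reconcile.
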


    This is a standard result for elliptic operators (see Theorem 1 in Section 6.4.1 in \cite{evans10}). 

\begin{lemma}
\label{lem: uniform TMD estimation}    
{ Let $\epsilon\rightarrow 0$ and $n\rightarrow \infty$ so that the condition \eqref{eq:thmM4_en} holds. Then}
the probability that for every $x_i \in \mathcal{X}(n)$ 
\begin{align}
    |4\beta^{-1}\left[L^{(n)}_{\epsilon,\mu}u\right](x_i) - \mathcal{L}u(x_i)| &\medspace{\le} 
    \medspace\frac{\alpha\epsilon}{\rho^{1/2}(x_i)}\left(2\|\nabla u(x_i)\|\epsilon^{1/2} + 11{|u(x_i)|}\right) \label{eq:lemma8}\\
    &+ \epsilon \left|\mathcal{B}_{1}[u,\mu] + \mathcal{B}_{2}[u,\mu, \rho] + \mathcal{B}_{{3}}[u,\mu, \rho] \right|  {+ O(\epsilon^2)} \notag
\end{align}
is greater or equal to $1 -2n^{-2}$. 
\end{lemma}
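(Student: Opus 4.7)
The plan is to upgrade the single-point consistency estimate of Theorem \ref{thm:main} (equivalently, Theorem \ref{thm:M4} combined with the deterministic bias bound of Theorem \ref{thm: bias error}) to a simultaneous estimate over every point of the cloud via a union bound. The right-hand side of \eqref{eq:lemma8} splits cleanly into a deterministic bias term, which is under the control of Theorem \ref{thm: bias error} and holds pointwise with probability one, and a stochastic variance term, which is the only piece that needs a concentration argument.

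Concretely, I would proceed in three steps. First, fix an index $i \in \{1,\dots,n\}$ and let $E_i$ be the event that \eqref{eq:lemma8} fails at $x_i$. Because the bias contribution is non-random, $E_i$ coincides with the event that the variance bound of Theorem \ref{thm:M4} fails at $x_i$. Applied with the evaluation point selected as $x_i$ and with the remaining sample $\{x_j\}_{j\neq i}$ playing the role of the random point cloud, Theorem \ref{thm:M4} gives the conditional estimate $\mathbb{P}(E_i \mid x_i) \le 2 n^{-3}$. Second, following the tower-of-expectations trick used at the end of the proof of Theorem \ref{thm:M1},
\[
\mathbb{P}(E_i) \;=\; \mathbb{E}\bigl[\mathbb{E}[\mathbf{1}_{E_i}\mid x_i]\bigr] \;\le\; \mathbb{E}_{x_i}[2n^{-3}] \;=\; 2n^{-3},
\]
so the bound becomes unconditional without loss in the exponent. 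Third, a union bound over the $n$ indices gives
\[
\mathbb{P}\!\left(\bigcup_{i=1}^{n} E_i\right) \;\le\; \sum_{i=1}^{n} \mathbb{P}(E_i) \;\le\; 2 n^{-2},
\]
and passing to the complement yields the claimed simultaneous bound with probability at least $1-2n^{-2}$.

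The only point requiring care is the conditioning step: Theorem \ref{thm:M4} is formulated for a fixed selected $x_i\in\mathcal{X}(n)$, and its Bernstein-style arguments (via Theorems \ref{thm:M2} and \ref{thm:M3}) act on sums over $j\neq i$, so the probabilistic statement is genuinely conditional on $x_i$. This is precisely the structure needed for the tower property to deliver an unconditional bound without inflating constants. Apart from this bookkeeping, there is no real obstacle; the logarithmic factor baked into the definition of $\alpha$ in \eqref{eq:thmM4_alpha} is already large enough to absorb the extra factor of $n$ coming from the union bound, which is exactly why the scaling hypothesis \eqref{eq:thmM4_en} is unchanged in the statement of the lemma.
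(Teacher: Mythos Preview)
Your proposal is correct and follows essentially the same route as the paper: invoke the single-point estimate from Theorem \ref{thm:main} (which already delivers the unconditional probability $\ge 1-2n^{-3}$) and then take a union bound over the $n$ points of $\mathcal{X}(n)$. You spell out the tower-of-expectations step in more detail than the paper, which simply quotes Theorem \ref{thm:main} and applies
\[
\mathbb{P}\!\left(\bigcap_{i=1}^n E(x_i)\right) \ge 1-\sum_{i=1}^n 2n^{-3}=1-2n^{-2},
\]
but the substance is the same.
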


\begin{proof}
    Let $E(x_i)$ be the event 
        that where inequality \eqref{eq:lemma8} holds for $x_i$. According to Theorem \ref{thm:main}, $\mathbb{P}(E(x_i))>1-2n^{-3}$. Hence    
    \begin{align}
        \mathbb{P}\left(\bigcap_{i=1}^nE(x_i)\right)& = 1 - \mathbb{P}\left(\bigcup_{i=1}^nE(x_i)^c\right) \ge 1 - \sum_{i=1}^n\mathbb{P}(E(x_i)^c)\notag\\
        &\ge 1- \sum_{i=1}^n2n^{-3} = 1- 2n^{-2}.
    \end{align}
\end{proof}

\begin{lemma}\label{lem:prob of sets lemma}
    Let $\mathcal{X}(n)$ be a point cloud on $\mathcal{M}$ of $n$ points sampled i.i.d. with density $\rho$. The probability of $\mathcal{D}$ being non-empty tends to 1 exponentially: 
    \begin{align}
        \mathbb{P}(\mathcal{D} \neq \emptyset)  \geq  1 - \exp{(-n\mathbb{E}_{\rho}[\mathbb{I}_{\Omega}])} \label{eq:prob of D nonempty}.
    \end{align}
\end{lemma}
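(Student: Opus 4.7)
The plan is to reduce the event $\{\mathcal{D}=\emptyset\}$ to the event that none of the $n$ i.i.d. samples land in $\Omega$, compute its probability exactly from independence, and then apply the elementary inequality $1-t \le e^{-t}$ for $t\in[0,1]$ to obtain the stated exponential bound.

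First I would observe that by the definition $\mathcal{D} = \{i \mid x_i \in \Omega\}$, the event $\{\mathcal{D}=\emptyset\}$ is equivalent to $\{x_i \notin \Omega \text{ for all } 1 \le i \le n\}$. For a single sample $x_i$ drawn according to the density $\rho$, we have
\begin{equation}
\mathbb{P}(x_i \in \Omega) \;=\; \int_{\Omega} \rho(x)\,d{\rm vol}(x) \;=\; \mathbb{E}_{\rho}[\mathbb{I}_{\Omega}],
\end{equation}
so that $\mathbb{P}(x_i \notin \Omega) = 1 - \mathbb{E}_{\rho}[\mathbb{I}_{\Omega}]$. Since the points of $\mathcal{X}(n)$ are sampled i.i.d. from $\rho$, independence across $i$ gives
\begin{equation}
\mathbb{P}(\mathcal{D}=\emptyset) \;=\; \prod_{i=1}^{n}\mathbb{P}(x_i\notin\Omega) \;=\; \bigl(1-\mathbb{E}_{\rho}[\mathbb{I}_{\Omega}]\bigr)^n.
\end{equation}

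Then I would invoke the standard inequality $1-t \le e^{-t}$ valid for all $t\in\mathbb{R}$ (and in particular for $t = \mathbb{E}_{\rho}[\mathbb{I}_{\Omega}] \in [0,1]$) to conclude
\begin{equation}
\mathbb{P}(\mathcal{D}=\emptyset) \;\le\; \exp\bigl(-n\,\mathbb{E}_{\rho}[\mathbb{I}_{\Omega}]\bigr),
\end{equation}
and therefore $\mathbb{P}(\mathcal{D}\neq\emptyset) = 1 - \mathbb{P}(\mathcal{D}=\emptyset) \ge 1 - \exp(-n\mathbb{E}_{\rho}[\mathbb{I}_{\Omega}])$, which is precisely \eqref{eq:prob of D nonempty}.

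There is essentially no serious obstacle here: the only modeling point worth noting is that $\mathbb{E}_{\rho}[\mathbb{I}_{\Omega}] > 0$, which follows from Assumption \ref{Ass5} (the set $\Omega$ is closed with $C^1$ boundary and nonempty interior) together with the standing hypothesis that $\rho$ is bounded below by $\rho_{\min}>0$ on the sampled region; without this positivity the bound is still formally true but becomes vacuous. This lemma will then feed into the proof of Theorem \ref{thm: BVP error estimate} by guaranteeing, with probability tending to one exponentially fast in $n$, that the Dirichlet index set $\mathcal{D}$ on which boundary data is enforced is nonempty, which is the hypothesis required by the discrete maximum principle of Lemma \ref{lem:max_principle}.
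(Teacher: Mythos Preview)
Your proof is correct and follows essentially the same approach as the paper: reduce $\{\mathcal{D}=\emptyset\}$ to the intersection of the independent events $\{x_i\notin\Omega\}$, factor using i.i.d., and apply $1-t\le e^{-t}$. Your additional remarks about the positivity of $\mathbb{E}_\rho[\mathbb{I}_\Omega]$ and the role of the lemma in enabling the discrete maximum principle are accurate and go slightly beyond what the paper spells out.
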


\begin{proof}
Obviously,
$$
\mathbb{P}(\mathcal{D} \neq \emptyset) =  1 - \mathbb{P}(\mathcal{D} =\emptyset) = 1- \mathcal{P}\left(\bigcap_{i} x_i \notin \Omega\right).
$$ 
Since $x_i$, $1\le i\le n$, are i.i.d. samples, the probability that none of $x_i$ lies in $\Omega$ is the product of probabilities that each $x_i$ does not lie in $\Omega$, i.e.,
$$
\mathcal{P}\left(\bigcap_{i} x_i \notin \Omega\right) = \prod_{i}\mathcal{P}(x_i\notin\Omega).
$$
In turn, $\mathcal{P}(x_i\notin\Omega) = 1 - \mathbb{E}_{\rho}[\mathbb{I}_{\Omega}]$.
Therefore, 
$$
\mathbb{P}(\mathcal{D} \neq \emptyset) = 1- \prod_{i}\left(1 - \mathbb{E}_{\rho}[\mathbb{I}_{\Omega}] \right) = 1 -\left( 1 - \mathbb{E}_{\rho}[\mathbb{I}_{\Omega}]\right)^n  \ge 1 - \exp{(-n\mathbb{E}_{\rho}[\mathbb{I}_{\Omega}])}.
$$
The last inequality follows from the fact that $1-x\le e^{-x}$.
\end{proof}

{ Now we are ready to prove Theorem \ref{thm: BVP error estimate}.}
\begin{proof}
Let $u$ be the exact solution to \eqref{eq: general boundary value problem}.
Let $v$ be the numerical solution, i.e., $v$ satisfies \eqref{eq:unum}.
For brevity, we will denote $L_{\epsilon,\mu}^{(n)}$ by $L$. This argument uses the approach from \cite[p.195-196]{morton2005numerical}. We have
\begin{align}
   \begin{cases} \mathcal{L}u =f,& x\in\mathcal{M}\backslash\Omega\\
   u = g,& x \in\Omega\end{cases}\quad{\rm and}\quad
   \begin{cases}[4\beta^{-1}Lv]_i =f_i,& x_i\in\mathcal{M}\backslash\Omega\\
   v(x_i) = g(x_i),& x_i \in\Omega\end{cases}.   
\end{align}
By Lemmas \ref{lem: uniform TMD estimation} and \ref{lem:prob of sets lemma}, with high probability greater or equal to $1 - 2n^{-2} - \exp{(-n\mathbb{E}[I_{\Omega}])}$, for all $x_i \in\mathcal{X}(n)$, all $n$ large enough and $\epsilon$ small enough, there is a positive function $C_{u}(x)$ that depends only  on $x$, $u$, $\mu$, and $\rho$ but not on $\epsilon$ and $n$ such that
\begin{align}
    |4\beta^{-1}[Lu](x_i) - \mathcal{L}u(x_i)| 
    \le
    C_u(x_i)\epsilon.  \label{eq:lemma8u}
\end{align}
For brevity, we will use the subscript $i$ to denote the value at $x_i$. Inequality \eqref{eq:lemma8u} implies that 
\begin{equation}
\label{eq:longchain1}
  f_i -\epsilon C_{u,i} = [\mathcal{L}u]_i - \epsilon C_{u,i} \le  [4\beta^{-1}Lu]_i \le [\mathcal{L}u]_i + \epsilon C_{u,i} = f_i + \epsilon C_{u,i}. 
\end{equation}

To construct comparison functions to which we will apply the discrete maximum principle, we will need an auxiliary function $\phi$. It is convenient to choose $\phi$  to be the solution to the boundary-value problem
\begin{equation}
\label{eq:aux1}
   \begin{cases} \mathcal{L}\phi = 1,& x\in\mathcal{M}\backslash\Omega\\
   \phi = 0,& x \in\partial \Omega\end{cases}.    
\end{equation}
According to the maximum principle, precisely item 1 of Lemma \ref{lem:max_principle_cont}, the maximum of $\phi$ is reached on the boundary. Since $\phi$ is zero on the boundary, $\phi(x) \le 0 $ for $x\in \mathcal{M}\backslash\Omega$. 
Moreover, according to Lemmas \ref{lem: uniform TMD estimation} and \ref{lem:prob of sets lemma}, for $\epsilon$ small enough and $n$ large enough, there exists a positive function $C_{\phi}(x)$ that depends on $x$, $u$, $\mu$, and $\rho$ but not on $\epsilon$ and $n$ such that
\begin{equation}
    \label{eq:Lphi_bound}
     1 -\epsilon C_{\phi,i} = [\mathcal{L}\phi]_i - \epsilon C_{\phi,i} \le  [4\beta^{-1}L\phi]_i \le [\mathcal{L}\phi]_i + \epsilon C_{\phi,i} = 1 + \epsilon C_{\phi,i}. 
\end{equation}
with probability greater of equal to $1 - 2n^{-2} - \exp{(-n\mathbb{E}[I_{\Omega}])}$. 

Now we define the first comparison function 
\begin{equation}
    \label{eq:aux2}
    \psi = u - v - \epsilon \left(\max_{1\le i\le n}C_{u,i}+1\right)\phi \equiv u-v-\epsilon(C_{\max} + 1)\phi.
\end{equation}
Equations \eqref{eq:longchain1} and \eqref{eq:Lphi_bound} imply that,  with high probability,  
$[L\psi]_i \le 0$ for all $i\in\mathcal{I}$ if $\epsilon$ is small enough.
Indeed, with probability greater or equal to $1 - 4n^{-2} -\exp(-n\mathbb{E}[I_{\Omega}])$,  
\begin{align*}
    [4\beta^{-1}L\psi]_i &= [4\beta^{-1}L u]_i - [4\beta^{-1}L v]_i - \epsilon ( C_{\max} + 1)[4\beta^{-1}L \phi]_i \\
    &\le f_i +\epsilon C_{u,i} - f_i -\epsilon ( C_{\max} + 1)(1-C_{\phi,i}\epsilon)  \notag\\
    &\le -\epsilon \left(1 - C_{\phi,i}\epsilon (1 + C_{\max})\right) 
    \le 0.
\end{align*}
We did not multiply the exponent by 2 in the probability estimate $1 - 4n^{-2} -\exp(-n\mathbb{E}[I_{\Omega}])$ because the same set of points is used in the estimates for $u$ and $\phi$.

Hence, by the discrete maximum principle in Lemma \ref{lem:max_principle}, the minimum of $\psi$ is achieved on $\mathcal{D}$. Since $u_i = v_i = g_i$ for all  $i\in\mathcal{D}$ and $\phi_i = 0$ for $i\in \mathcal{D}$, it follows that $\psi_i = 0$ for all $i\in \mathcal{D}$. Therefore, $\psi_i \ge 0$ for all $1\le i\le n$, i.e.,
\begin{equation}
    \label{eq:uv1}
    u_i-v_i \ge \epsilon(C_{\max} + 1)\phi_i\equiv -\epsilon(C_{\max} + 1)|\phi_i| \quad \forall 1\le i\le n.
\end{equation}

Next, we define the second comparison function
\begin{equation}
    \label{eq:aux3}
    \psi = u - v + \epsilon \left(\max_{1\le i\le n}C_{u,i}+1\right)\phi \equiv u-v +\epsilon(C_{\max} + 1)\phi
\end{equation}
and argue that $[L\psi]_i \ge 0$ for all  $i\in\mathcal{I}$ with high probability. Indeed, \eqref{eq:longchain1} and \eqref{eq:Lphi_bound} imply that with probability greater or equal to $1 - 4n^{-2} -\exp(-n\mathbb{E}[I_{\Omega}])$,  
\begin{align*}
    [4\beta^{-1}L\psi]_i &= [4\beta^{-1}L u]_i - [4\beta^{-1}L v]_i + \epsilon ( C_{\max} + 1)[4\beta^{-1}L \phi]_i \\
    &\ge f_i -\epsilon C_{u,i} - f_i +\epsilon ( C_{\max} + 1)(1 -C_{\phi,i}\epsilon)  \notag\\
    &\ge \epsilon \left(1 - C_{\phi,i}\epsilon (1 + C_{\max})\right) 
    \ge 0
\end{align*}
for $\epsilon$ small enough. By Lemma \ref{lem:max_principle}, the maximum of $\psi_i$ is achieved on $\mathcal{D}$. Since $u_i = v_i = g_i$ and $\phi_i =0 $ for $i\in\mathcal{D}$, $\psi_i \le 0$ for all $i\in\mathcal{D}$. Hence $\phi_i\le 0$ for all $1\le i\le n$, i.e.,
\begin{equation}
    \label{eq:uv2}
    u_i-v_i \le -\epsilon(C_{\max} + 1)\phi_i \equiv \epsilon(C_{\max} + 1) |\phi_i| \quad \forall 1\le i\le n.
\end{equation}
Inequalities \eqref{eq:uv1} and \eqref{eq:uv2} imply that, for $\epsilon$ small enough, with probability greater or equal to $1 - 4n^{-2} -\exp(-n\mathbb{E}[I_{\Omega}])$,  
\begin{equation}
    \label{eq:uvbound}
    |u(x_i) - v(x_i)| \le  \epsilon(C_{\max} + 1)|\phi(x_i)| \quad\forall 1\le i\le n 
\end{equation}
where $\phi$ is the solution to \eqref{eq:aux1}.
Finally, we note that 
$$
C_{\max} = \max_{1\le i\le n}C_{u,i} \le \max_{x\in\mathcal{M}}C_{u,\mu,\rho}(x)
$$
where $C_{u,\mu,\rho}(x)$ is defined be \eqref{eq:C} in the statement of Theorem \ref{thm: BVP error estimate}.
\end{proof}

%%%%%%%% T E S T.   P R O B L E M S
\section{Test problems}\label{sec: applications}
\subsection{Illustrating error model}\label{subsec: illustrating model}
In Corollary \ref{cor: reducing bias} it was shown that bias error prefactors  $\mathcal{B}_2$ and $\mathcal{B}_3$ can be eliminated when $\rho$ is uniform on $\mathcal{M}$ and when the test function $f$ is the committor. 
{ One can expect that
these cancellations reduce the magnitude of the overall bias error prefactor $\mathcal{B}_1 + \mathcal{B}_2 + \mathcal{B}_3$, though this might not be necessary.} 
The example presented in this section demonstrates that these cancellations indeed reduce the magnitude of the prefactor, thus showing that using uniform densities and approximating the committor function is a viable strategy for improving the accuracy of TMDmap. 

\subsubsection{Theoretical considerations} The leading order terms in the error model in Theorem \ref{thm:main} can be briefly stated as follows: 
\begin{align}
    |4\beta^{-1}L^{(n)}_{\epsilon,\mu}f(x) - \mathcal{L}f(x)| \leq \sqrt{\frac{\log n}{n \epsilon^{2+d/2}}}|\mathcal{V}[f,\mu,\rho](x)| + \epsilon |\mathcal{B}[f,\mu,\rho](x)|. \label{eq: summarized error model}
\end{align}
Here $\mathcal{V}$ and $\mathcal{B}$ are bias and variance prefactors respectively given by 
\begin{align}
    \mathcal{V} &:= \frac{1}{\rho(x)^{1/2}}\left(2\|\nabla f(x)\|\epsilon^{1/2} + 11|f(x)|\right), \\
    \mathcal{B} &:= \mathcal{B}_1 + \mathcal{B}_{2} + \mathcal{B}_3. 
\end{align}
$\mathcal{V}$ and $\mathcal{B}$ are both $O(1)$ in $n$ and $\epsilon$. While \eqref{eq: summarized error model} is an upper bound on the absolute error $|4\beta^{-1}L^{(n)}_{\epsilon,\mu}f(x) - \mathcal{L}f(x)|$ due to the presence of the \emph{variance error}, the \emph{bias error} estimate from Theorem \ref{thm: bias error} is asymptotically sharp and does not involve absolute values. Thus if $\epsilon$ and $n$ are scaled such that the variance error is fixed, then with high probability the error behaves approximately as a linear function in $\epsilon$, i.e.
\begin{align}
    4\beta^{-1}L^{(n(\epsilon))}_{\epsilon,\mu}f(x) - \mathcal{L}f(x) \sim a + \epsilon b. \label{eq:linearmodel}
\end{align}
To fix the variance error, the number of data points $n$ must depend on $\epsilon$ such that 
\begin{align}
    \frac{\log n(\epsilon)}{n(\epsilon) \epsilon^{2+d/2}} \sim O(1). \label{eq:n,epsilon choice}
\end{align}
After fixing the variance error, the slope of the linear function \eqref{eq:linearmodel} yields an estimate for the prefactor of the bias error. Using this method, we will obtain a prefactor estimate for a 1D test system embedded in 2D. 

\subsubsection{1D test system}
\label{sec:unitcircle}
Consider a system 
on the unit circle $\mathbb{S}^1$ parameterized by arc length (Figure \ref{fig: 1D potential mu}).
Let it evolve according to the overdamped Langevin dynamics \eqref{eq: old} with 
\begin{align}
    V(\theta) = \left(4\cos^2\left(\frac{\theta}{2}\right)- \frac{3}{2}\right)^2.\label{eq: 1d potential}
\end{align}
\begin{figure}[h]
    \centering
    \includegraphics[width=0.435\textwidth]{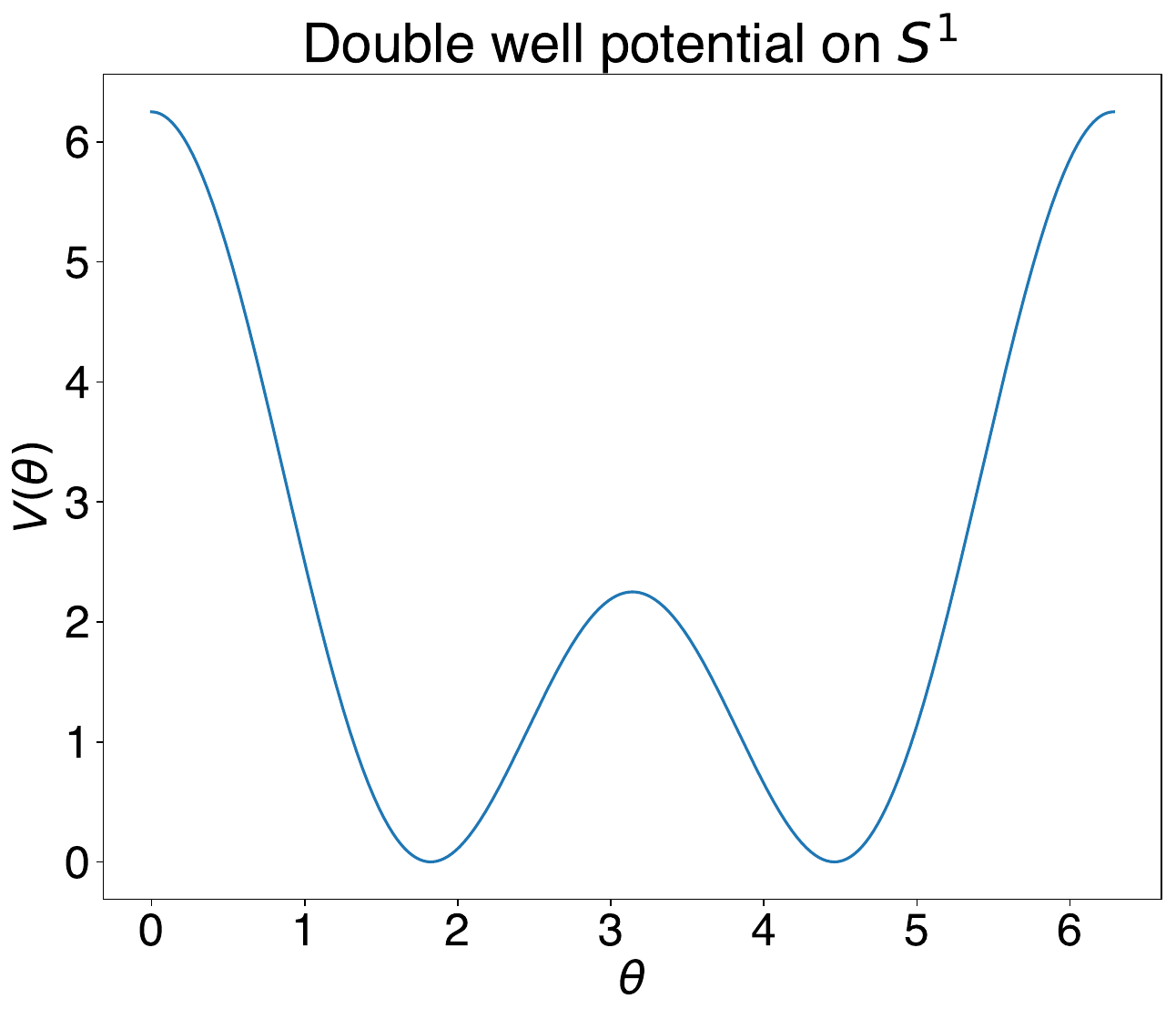}
    \includegraphics[width=0.45\textwidth]{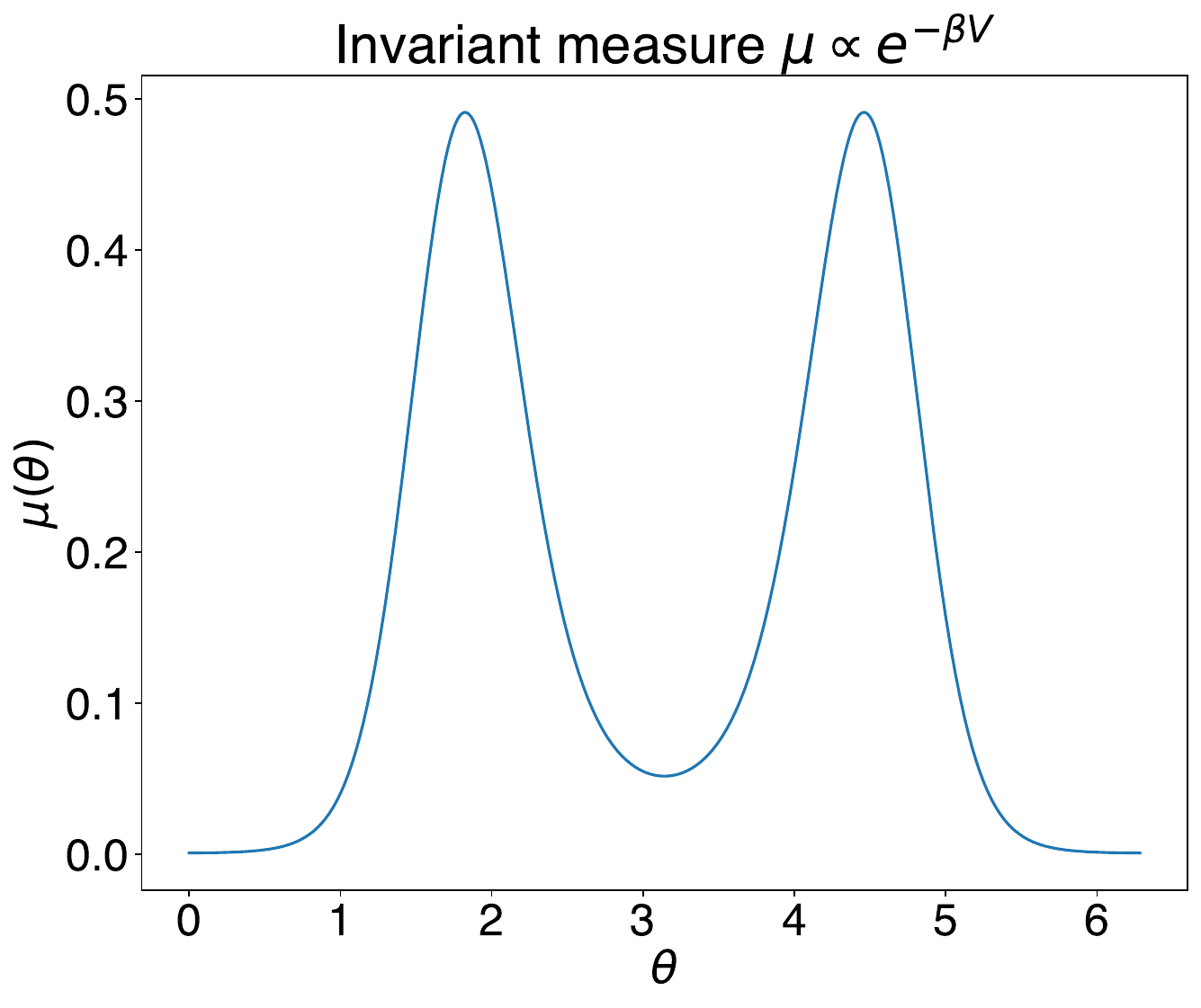}
    \caption{The potential $V(\theta)$, $0\le\theta<2\pi$, given by \eqref{eq: 1d potential} and the invariant measure $\mu$ for a 1D system on $\mathbb{S}^1$ following the overdamped Langevin dynamics \eqref{eq: old} with the drift $\nabla V$.
    } 
    \label{fig: 1D potential mu}
\end{figure}

In this case, the committor can be derived analytically. Let $0 < \theta_1 < \theta_2 < 2\pi$ be the minima of $V$ where 
\begin{align}
    \theta_{1} = 2\cos^{-1}\left(-\frac{\sqrt{3}}{2\sqrt{2}}\right)\approx 1.8235, \quad \theta_{2} = 2\cos^{-1}\left(\frac{\sqrt{3}}{2\sqrt{2}}\right)\approx 4.4597.
\end{align}
Set $A = [\theta_1-r, \theta_1+r]$ and $B = [\theta_2-r, \theta_2+r]$ where $r = 0.1$. Moreover, fix $\beta = 1$. Then the committor is 
\begin{align}
   q(\theta) & = \frac{\int_{\theta_{1}+r}^{\theta}\exp{\left(\beta V(\theta')\right)}\,d\theta'}{\int_{\theta_{1}+r}^{\theta_{2}-r}\exp{\left(\beta V(\theta')\right)}\,d\theta'}, \quad &\theta \in [\theta_1 + r, \theta_2 - r],\notag\\
   q(\theta) & =\frac{\int_{\theta}^{2\pi}\exp{\left(\beta V(\theta')\right) }\,d\theta'+\int_{0}^{\theta_1-r}\exp{\left(\beta V(\theta')\right)}\,d\theta'}{\int_{\theta_{2}+r}^{2\pi}\exp{\left(\beta V(\theta')\right)}\,d\theta'    +\int_{0}^{\theta_1-r}\exp{\left(\beta V(\theta')\right)}\,d\theta'
   }, \quad &\theta \in [\theta_2 + r, 2\pi), \label{qtheta}\\
   q(\theta) & =\frac{\int_{\theta}^{\theta_1-r}\exp{\left(\beta V(\theta')\right)}\,d\theta'}{\int_{\theta_{2}+r}^{2\pi}\exp{\left(\beta V(\theta')\right)}\,d\theta'    +\int_{0}^{\theta_1-r}\exp{\left(\beta V(\theta')\right)}\,d\theta'
   }, \quad &\theta \in [0, \theta_1-r]. \notag
\end{align}

\subsubsection{The experimental setup}
To demonstrate the error model in Theorem \ref{thm:main} and an improvement in the bias error prefactor \eqref{eq: bias error term}, the following experiment is conducted.
\begin{enumerate}
    \item  For each $\epsilon$, the number $n(\epsilon)$ is chosen such that the error behaves according to \eqref{eq:linearmodel}:
    \begin{align}
        \frac{n(\epsilon)}{\log(n(\epsilon))} = 0.25\epsilon^{-5/2}. \label{eq:nepschoice}
    \end{align}
    The constant prefactor of $0.25$ is chosen as the $O(1)$ term in \eqref{eq:n,epsilon choice} to make the resulting number of points $n(\epsilon)$ fall into the interval $[10^4, 3\cdot10^4]$ for $\epsilon \in [0.023, 0.033]$.  
    \item To measure the effect of changing the sampling density, for each choice of $\epsilon$, $n(\epsilon)$ points are drawn i.i.d. on $\mathbb{S}^1$ using two sampling densities: the uniform density $\rho^{u}$ and a non-uniform density $\rho^{n.u}$ specified in \eqref{eq:rho.n.u.}. 
    The resulting datasets are defined as follows: 
    \begin{align}
      \mathcal{X}^{u}& = \psi\left(\{\theta^{u}_i\}_{i=1}^{n}\right), 
      \quad & 
      \theta^{u}_i &\sim \rho^{u} := \mathcal{U}[0,2\pi], \label{eq:rho.u} \\
     \mathcal{X}^{n.u}& = \psi\left(\{\theta^{n.u}_i\}_{i=1}^{n}\right), \quad & 
     \theta^{n.u}_i &\sim \rho^{n.u} := \pi + 0.1 + \sigma\pi\{\mathcal{N}(0,1)\}.  \label{eq:rho.n.u.}
    \end{align}
    The function $\psi(\theta) = [\cos\theta,\sin\theta]^\top$ is the embedding map $\psi: \mathbb{S}^1\rightarrow\mathbb{R}^2$ and $\{\mathcal{N}(0,1)\}$ denotes the fractional part of the standard normal distribution. The fractional normal distribution is shifted such that the resulting distribution $\rho^{n.u}$ oversamples around the point $\theta = \pi + 0.1$ lying in the transition region and undersamples around the sets $A$ and $B$. The parameter $\sigma$ modulates the variance of the fractional normal distribution. It was set to $\sigma=0.2$.
    The empirical densities of the data are visualized in Figure \ref{fig: 1D sampling densities}.  
    \begin{figure}[h]
    \centering
    \includegraphics[width=0.45\textwidth]{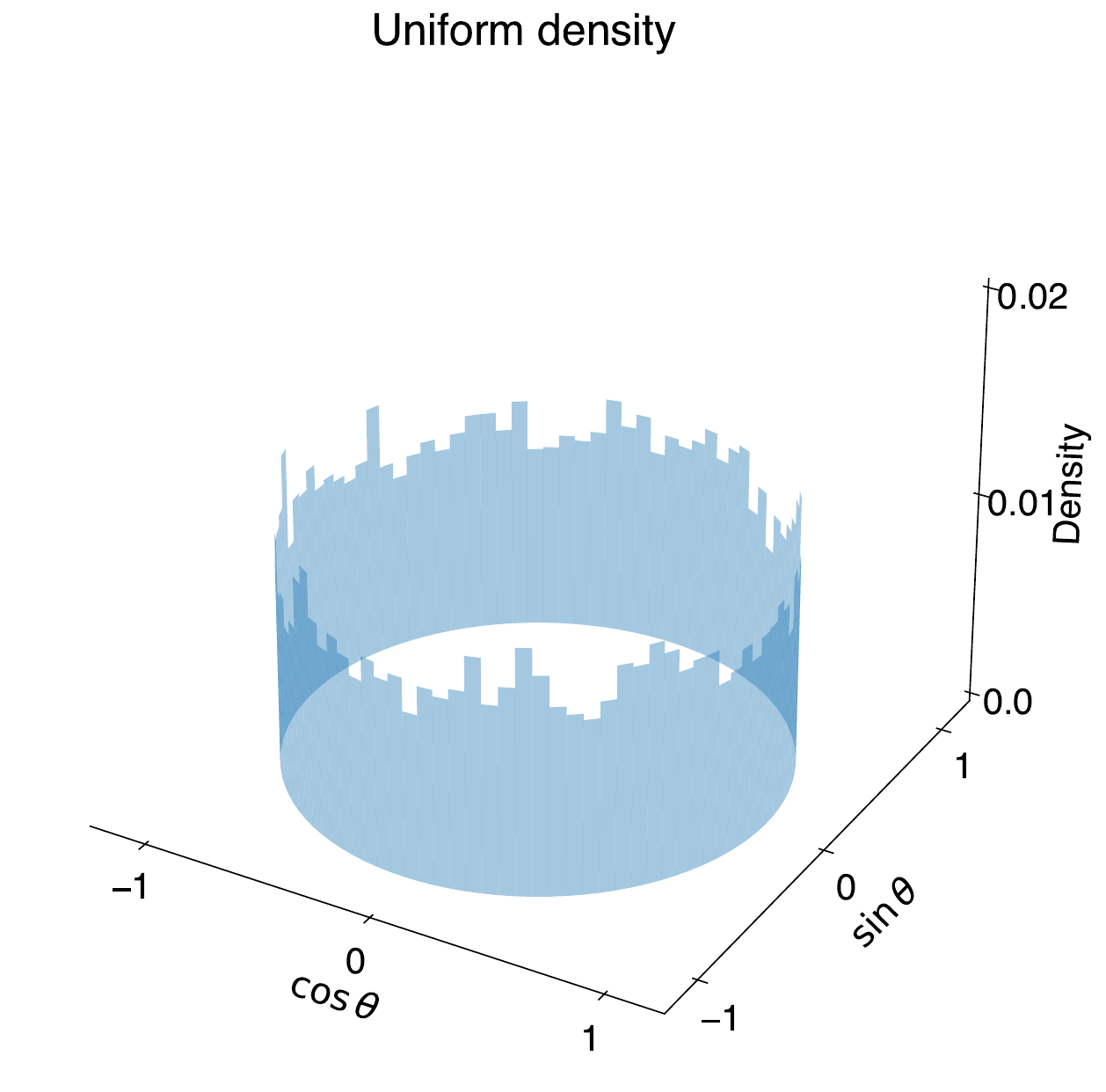}
    \includegraphics[width=0.45\textwidth]{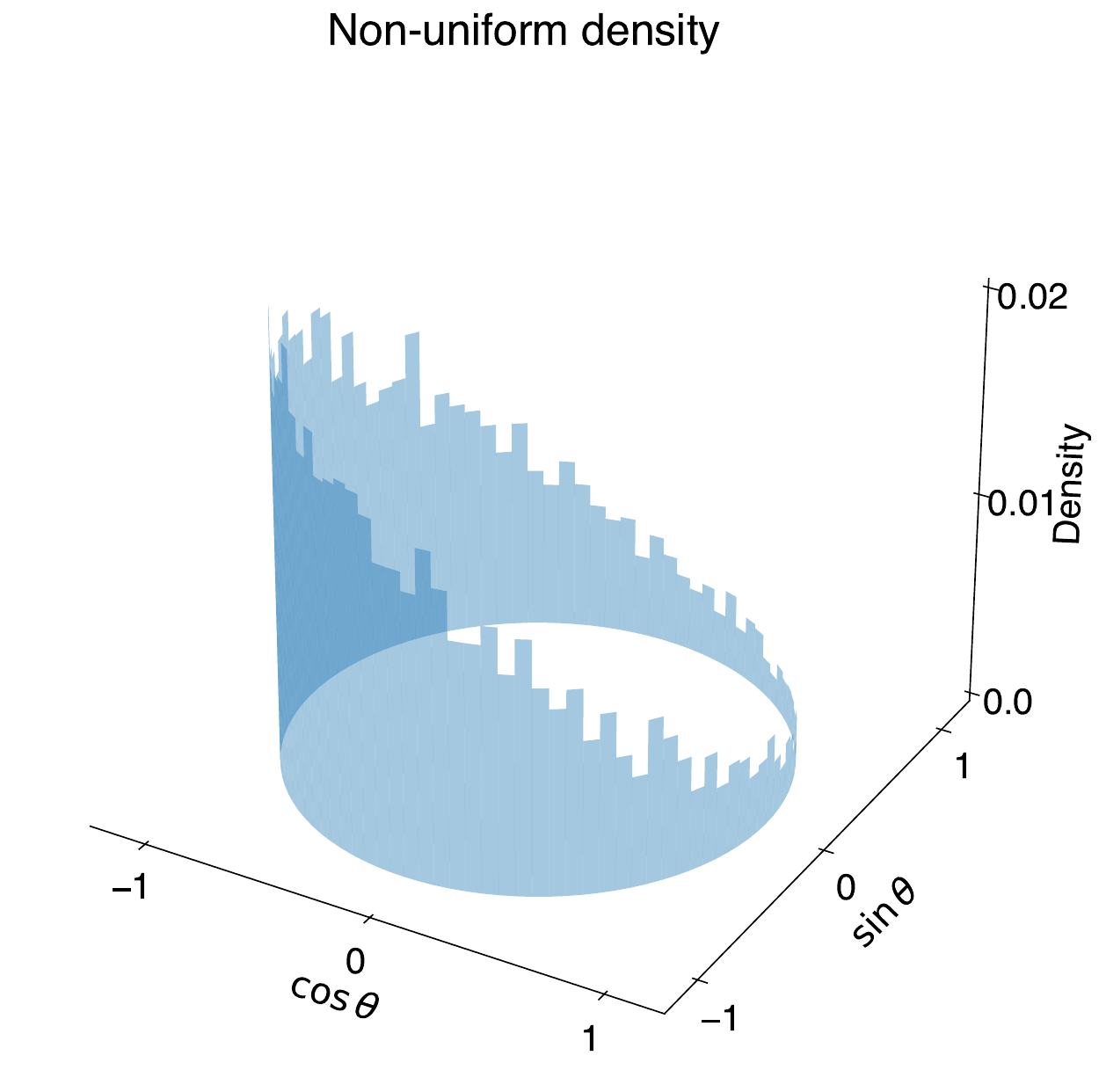}
    \caption{The empirical densities of $\mathcal{X}^{u}$ and $\mathcal{X}^{n.u}$. 
    Note that the nonuniform points $\mathcal{X}^{n.u}$ are clustered in the region around $\theta = \pi + 0.1$.}
    \label{fig: 1D sampling densities}
\end{figure}

    \item To measure the effect of changing the test function, $L^{(n(\epsilon))}_{\epsilon,\mu}f(x)$ was computed for two choices of test function: the committor $q$ given by formula \eqref{qtheta} and 
    $t(\theta) = \sin(\theta)$ (Figure \ref{fig: 1D choices of f}). The evaluation point was fixed to be $\theta = \pi$. 
\begin{figure}[h]
    \centering
    \includegraphics[width=0.430\textwidth]{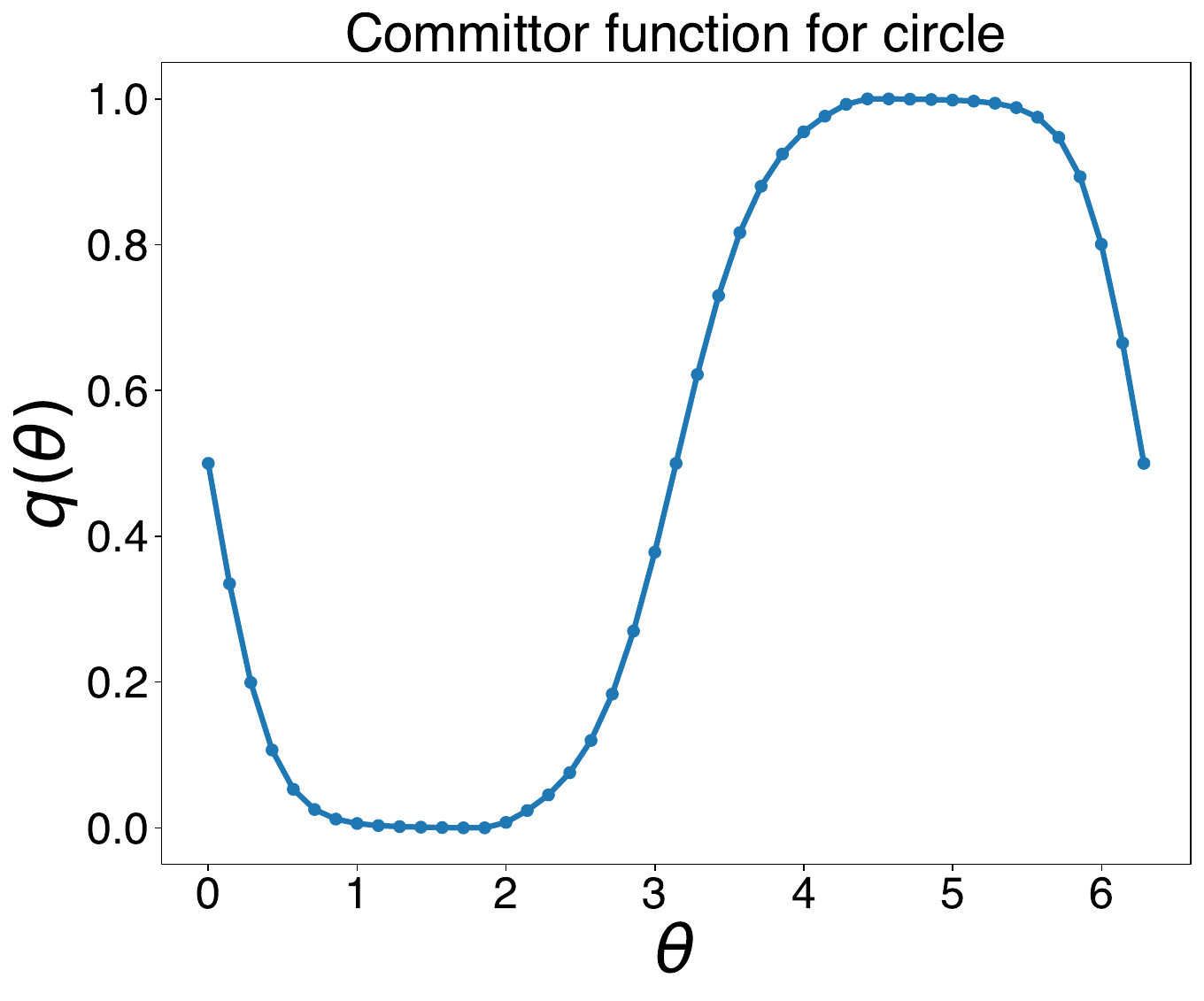}
    \includegraphics[width=0.45\textwidth]{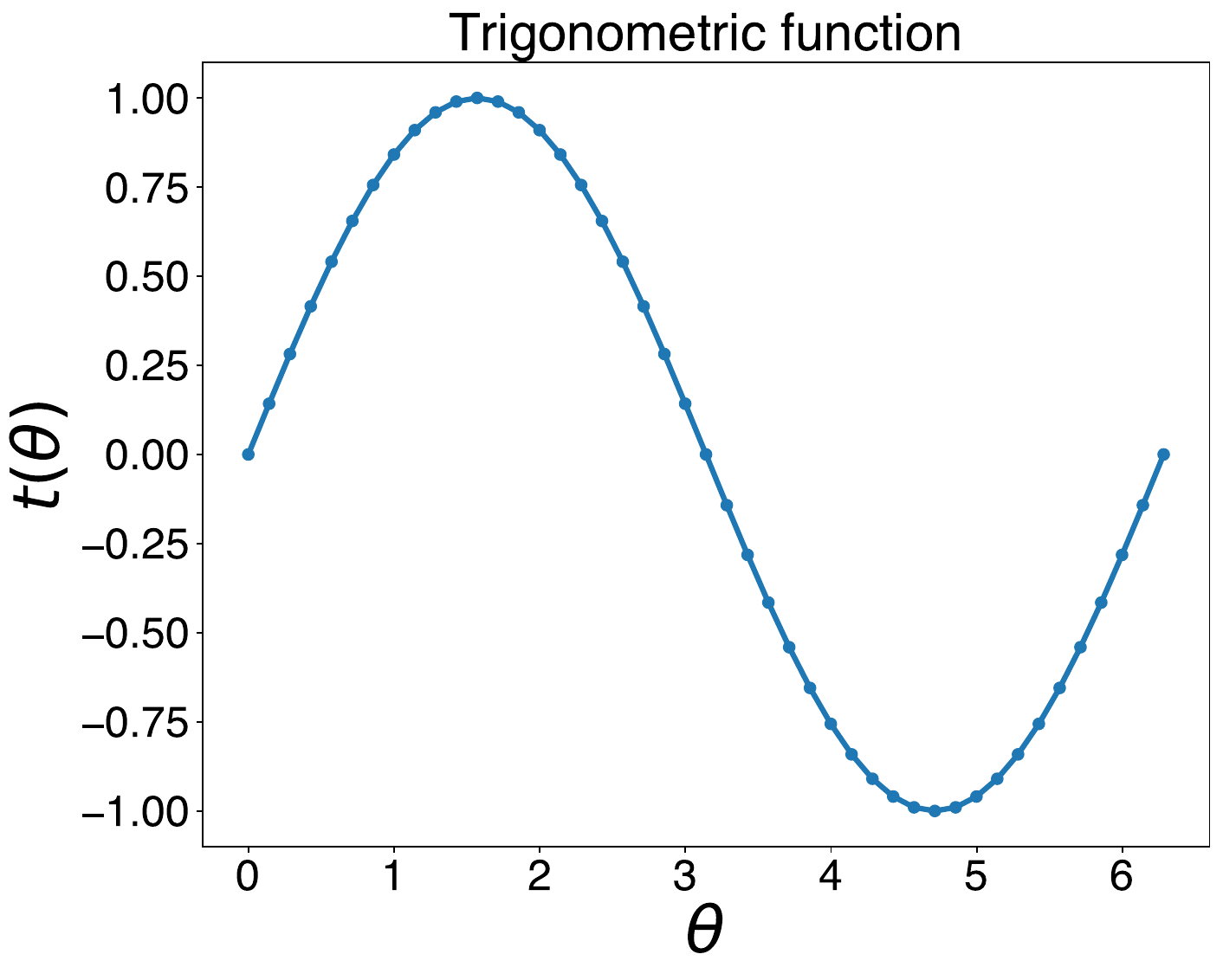}
    \caption{
    The two choices of $f$ for evaluating the consistency error as in~\eqref{eq:linearmodel}: the committor $q$ (left) and the trigonometric polynomial $t(\theta) = \sin \theta$ (right).}
    \label{fig: 1D choices of f}
\end{figure}
    
    \item For a given combination of sampling density and test function, the error in the left-hand side of \eqref{eq:linearmodel} was computed. This error is a function of $\epsilon$ only. 
\end{enumerate}
The above experiment was conducted for 10 equispaced values of $\epsilon$ in the range $[0.023, 0.033]$. Moreover, since $4\beta^{-1}L^{(n(\epsilon))}_{\epsilon,\mu}f(x) - \mathcal{L}f(x)$ is random due to the i.i.d. draws from $\rho$, the experiment was repeated 50 times for every value of $\epsilon$ to mitigate the statistical variation. Then a linear fit  to \eqref{eq:linearmodel} was obtained to this data as a function of $\epsilon$ and the slope of this fit was taken as the estimate of the prefactor of the bias error for the given combination of $f$ and $\rho$. A total of four linear fits were produced.  The results are presented in Figure \ref{fig: 1D bias prefactor estimaton of f}. 

\subsubsection{Summary of numerical results for reduction in bias error} 
We computed the error $4\beta^{-1}L^{n(\epsilon)}_{\epsilon,\mu}f(x) - \mathcal{L}f(x)$ for $f = q,t$ and $\rho = \rho^{u}, \rho^{n.u}$ as a function of $\epsilon$ ranging in $[0.023,0.033]$. In Figure \ref{fig: 1D bias prefactor estimaton of f}, a linear fit is obtained for $4\beta^{-1}L^{n(\epsilon)}_{\epsilon,\mu}f(x) - \mathcal{L}f(x)$ for each choice of $f$ and $\rho$, where the slope of the linear regression is then an approximation of the bias error prefactor involved. Note that the bias error prefactor in general may be negative since the bias error formula in Theorem \ref{thm: bias error} does not require taking absolute values. Here, it is of interest to quantify the prefactor of lowest \emph{magnitude} to measure which combination of sampling density and test function gives faster convergence of the bias error. The estimates of the prefactor magnitude are presented in Table \ref{tab: 1D bias prefactor estimaton of f}. We find that the bias error prefactor of the lowest magnitude occurs when using a uniform density as input to TMDmap and applying the resulting generator to the committor function, presumably due to the uniform sampling and the committor function canceling the bias prefactors $\mathcal{B}_2$ and $\mathcal{B}_3$ at the given point $x$.

\begin{table}[h]
\centering
\begin{tabular}{|l|llll|}
\hline
                          & \multicolumn{4}{c|}{Sampling density, Test function}                                                                                                  \\ \cline{2-5} 
                          & \multicolumn{1}{l|}{$\rho^{n.u}$,%Non-uniform
                           $t(\theta)$.} & \multicolumn{1}{l|}{$\rho^u$, %Uniform
                           $t(\theta)$} & \multicolumn{1}{l|}{$\rho^{n.u}$, %Non-uniform, 
                           $q$} & 
                           $\rho^u$, %,~ Uniform, 
                          $q$     \\ \hline
Abs. bias error prefactor & \multicolumn{1}{l|}{1.024}                     & \multicolumn{1}{l|}{0.778}                & \multicolumn{1}{l|}{1.148}            & $\textbf{0.398}$ \\ \hline
\end{tabular}
\caption{The bias error prefactor $b$ in \eqref{eq:linearmodel} is estimated for two choices of $\rho$ and two choices of $f$ for the 1D system described in Section \ref{sec:unitcircle}. The overall prefactor of lowest magnitude is obtained when using a uniform sampling density and applying the TMDmap generator to the committor function. This fact can be explained by the cancellations of the prefactors $\mathcal{B}_2$ and $\mathcal{B}_3$ in Theorem \ref{thm: bias error}.} 
\label{tab: 1D bias prefactor estimaton of f}
\end{table}

\begin{figure}[h]
    \centering
    \includegraphics[width=0.7\textwidth]{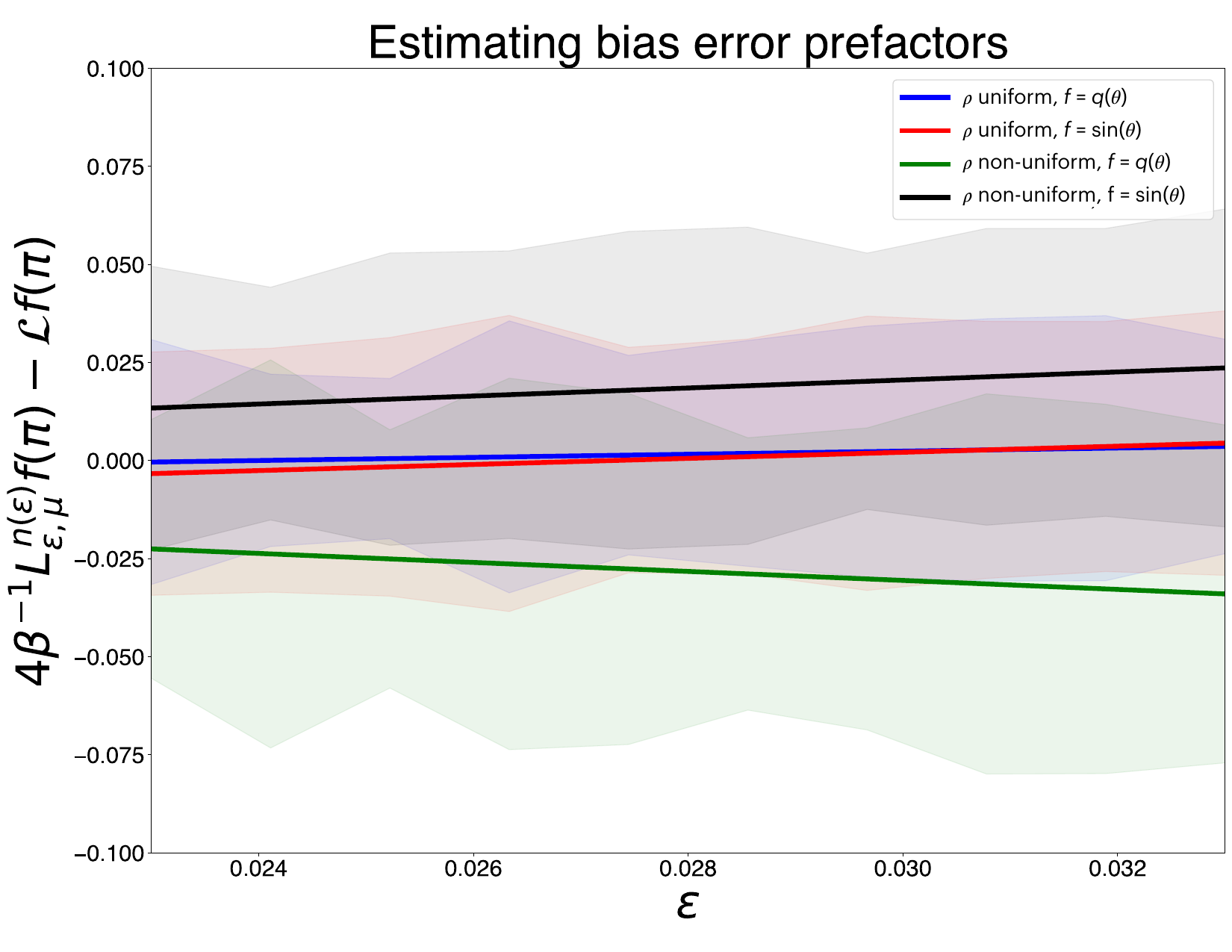}
    \caption{The consistency error for the committor, $f = q(\theta)$, and for $f = \sin(\theta)$ at $\theta = \pi$ as a function of $\epsilon$. The number of points $n(\epsilon)$ for each $\epsilon$ is chosen according to \eqref{eq:nepschoice} to keep the variance error constant. The datasets we sampled according to the uniform and the nonuniform densities in \eqref{eq:rho.u} and \eqref{eq:rho.n.u.} respectively.}
    \label{fig: 1D bias prefactor estimaton of f}
\end{figure}

%%%%%%%%%%%%%%%%%%%%%%%%%%%%%%%%%%%%
\subsection{Calculating the committor function}
\label{subsec: calc committor}
The goal of this section is to compute the committor as the solution to  \eqref{eq: tmdmap committor bvp} using TMDmap. The main focus will be on studying the discrepancy between the numerical solution $q^{{\sf TMD}}$ and the true solution $q$ as a function of the sampling density $\rho$ and kernel bandwidth $\epsilon$. 

\subsubsection{Theoretical considerations}
{ Corollary \ref{cor: committor error estimate} suggests that solving the committor problem numerically with a uniform sampling density $\rho$ will likely produce improvements in the accuracy of the numerical solution. Furthermore, bandwidth $\epsilon$ needs to be commensurate with the fixed number of points $n$ to ensure that the error formula in Theorem \ref{thm:main} holds.}

\subsubsection{Optimal bandwidth} Given fixed $n, \rho, \mu, $ and $x$, the expression in \eqref{eq: summarized error model} may be minimized for $\epsilon$ to obtain an optimal bandwidth $\epsilon^*$ given by 
     \begin{align}
         \epsilon^* \sim (1+d/4)^{\frac{1}{2+d/4}}\left(\frac{\log n}{n}\right)^{\frac{1}{4 + d/2}}\left|\frac{V}{\mathcal{B}_{1} + \mathcal{B}_2 + \mathcal{B}_{3}}\right|^{\frac{1}{2 + d/4}}.\label{eq: optimal bandwidth}
     \end{align}
     A similar version of the optimal bandwidth formula was derived in \cite{singer2006graph} for Laplacian eigenmaps. {That} work, however, stopped short of expanding further on this formula due to the complicated dependence of the optimal bandwidth on manifold-related parameters. { In contrast, this paper offers} a full enumeration of these parameters given by $\mathcal{V},$ $ \mathcal{B}_1,$ $ \mathcal{B}_2$ and $\mathcal{B}_3$. Moreover, the pointwise error in the left-hand side of \eqref{eq: summarized error model} splits into two regimes locally around $\epsilon^*$.
     \begin{enumerate}
         \item If $\epsilon \leq \epsilon^*$, the error is dominated by the divergence of the variance error term $\epsilon^{-(1+d/4)}$. 
         \item If $\epsilon \geq \epsilon^*$, the error is dominated by the linear bias error $\epsilon \mathcal{B}$. A reduction in the magnitude of $\mathcal{B}$ will make the error diverge more slowly from the minimum at $\epsilon^*$. Consequently, a smaller bias error prefactor will make the error more stable to small perturbations around $\epsilon^*$. 
     \end{enumerate}
     
It is difficult to explicitly compute prefactors in Theorem \ref{thm:main}. Hence it is not advisable to select the bandwidth using \eqref{eq: optimal bandwidth}. The common practice is to use a heuristic estimate for $\epsilon^\ast$ instead. We use the \emph{kernel sum}  or the \emph{Ksum} test \cite{singer2009detecting, berry2016variable, evans2022computing}: \begin{align}
    \epsilon^{*}_{\sf Ksum}  =  \underset{\epsilon}{\text{argmax}} \frac{\partial \log S(\epsilon)}{\partial \log \epsilon},\quad S(\epsilon) := \sum_{ij}[K^{(n)}_{\epsilon}]_{ij}. \label{eq: ksum test}
\end{align}

To implement the Ksum test numerically, the sum $S(\epsilon)$ of all entries of the kernel matrix $[K^{(n)}_{\epsilon}]_{ij} =\exp(-\|x_i-x_j\|^2/\epsilon)$ is computed at a range of $\epsilon$ values and its logarithm is plotted against $\log\epsilon$. The value of $\epsilon$ for which the slope of this plot is the largest is taken to be $\epsilon^{*}_{\sf Ksum}$. Therefore, at $\epsilon = \epsilon^{*}_{\sf Ksum}$, the kernel matrix is the most sensitive to bandwidth for the entire input point cloud. Although this test is not exact, it gives a good initial guess for a more exhaustive search for the optimal $\epsilon$. We highlight $ \epsilon^{*}_{\sf \sf Ksum}$ in Figures \ref{fig: muller comparison}, \ref{fig: muller metadynamics error}, \ref{fig: twowell comparison}, and \ref{fig: twowell metadynamics error}. The RMS error is flatter around $\epsilon^*$ and is nearly optimal at  $\epsilon^{*}_{\sf Ksum}$ in all our test problems when we use quasi-uniform sampling densities.

\subsubsection{Optimal sampling} The preceding discussion on choosing optimal $\epsilon$ reveals that the sensitivity of the error to small perturbations around $\epsilon^*$ can be ameliorated through reductions in the bias error prefactors. A quasi-uniform sampling density can be used to reduce $\mathcal{B}_2$ and $\mathcal{B}_1$, thus necessitating increased sampling of rare high-energy configurations lying in the transition region away from the metastable states. There are numerous ways of such rare event sampling: some examples relevant to molecular simulation are temperature acceleration \cite{maragliano2006temperature}, importance sampling \cite{botev2013markov}, umbrella sampling~\cite{kastner2011umbrella}, splitting methods~\cite{webber2020splitting},  as well as recent work utilizing generative models such as 
normalizing flows \cite{falkner2022conditioning}, 
Generative Adversarial Networks (GANs) or diffusion models \cite{sohl2015deep,goodfellow2020generative}. See~\cite{henin2022} for a recent comprehensive survey on enhanced sampling in molecular dynamics. In the present case, the situation is complicated because the low-dimensional manifold $\mathcal{M}$ where $\rho$ is supported is unknown and can only be accessed through sampling.  Consequently, in this setting, metadynamics \cite{laio2002escaping} emerges as a cheap and robust option to generate samples on $\mathcal{M}$. These samples can then be postprocessed to $\delta$-nets to enhance spatial uniformity. 

Metadynamics can be described as a modification of the Euler-Maruyama scheme of discretizing overdamped Langevin dynamics \eqref{eq: old}. The goal of metadynamics is to bias the system towards exploring regions of the energy landscape that are not sufficiently sampled under the original potential \(V(x)\). To do so, the metadynamics algorithm introduces a bias potential \(W(x,t)\) that is added to the original potential \(V(x)\) at each step of the Euler-Maruyama algorithm. The bias potential is constructed as a sum of Gaussian potentials deposited during the simulation time \(t\):
\begin{equation}
\label{metad}
{
W(x,t) = \sum_{t_j < t} w_0 \, \exp\left(-\frac{({\theta(x) - \theta(x(t_j)}))^2}{2\sigma^2}\right),
}
\end{equation}
where $\theta(\cdot)$ is a vector of collective variables used for biasing the trajectory,  $x(t_j)$ is the set of atomic positions at time \(t_j\), \(w_0\) is the height of each deposited Gaussian potential and \(\sigma\) controls the width of the Gaussians. The bias potential \(W(x,t)\) modifies the original potential energy, leading to an effective potential energy \(V_{\sf eff}(x, t) = V(x) + W(x,t)\). 
{ Thus, the metadynamics fills up the free energy wells with Gaussian bumps enabling the system to escape from metastable states and explore the configurational space. It is summarized in Algorithm \ref{alg:metad}.}

\begin{algorithm}
  \caption{Metadynamics}
  \SetAlgoLined
  \SetKwInOut{Input}{Input}
  \SetKwInOut{Output}{Output}

  \Input{Gaussian height $w_0$, Gaussian width $\sigma_0$, deposition stride $s$, maximum number of steps $n$, timestep $\Delta t$. 
  }
  \Output{Visited states trajectory $\mathcal{X}(n) = \{x_1, \ldots x_n\}$}

  Initialize the visited states trajectory: $\mathcal{X}(n) \gets [\,]$\;
  Initialize the history of visited states: $H \gets \emptyset$\;
  Initialize the potential: $V_{\sf eff} \gets V$\;
  \For{$i \gets 1$ \KwTo $N_{\text{steps}}$}{
    Sample a new state: $x_i \gets $ Euler-Maruyama step of size $\Delta t$ with potential $V_{\sf eff}$\;
    Append the current state to the trajectory: $\mathcal{X}(n) \gets \mathcal{X}(n) \cup \{x_i\}$\;

    \If{$i \bmod s = 0$}{
      Deposit Gaussian: $V_{\sf eff} \gets V_{\sf eff}  + w_0 \exp\left(-\frac{(\theta(\cdot) -\theta(x_i))^2}{2\sigma^2}\right)$}
  }
  \label{alg:metad}
\end{algorithm}

The $\delta$-net algorithm outlined in Algorithm \ref{alg:deltanet} is a greedy procedure used to select a sample of points from the dataset $\mathcal{X}(n)$ such that any pair of points is at least a distance of $\delta$ apart. The output point cloud tends to be spatially uniform and approximates samples from the uniform density on $\mathcal{M}$. 

\begin{algorithm}
  \caption{$\delta$-net Algorithm}
  \SetAlgoLined
  \SetKwInOut{Input}{Input}
  \SetKwInOut{Output}{Output}

  \Input{Set of points $P$, threshold distance $\delta$}
  \Output{$\delta$-net $D \subseteq P$}

  Initialize an empty set $D \gets \emptyset$\;
  
  \ForEach{$p \in P$}{
    \If{$\text{no point in } D \text{ is within distance } \delta \text{ from } p$}{
      Add $p$ to the $\delta$-net: $D \gets D \cup \{p\}$\;
    }
  }

  \KwRet{$D$}\;
  \label{alg:deltanet}
\end{algorithm}

\subsubsection{Experimental setup.}\label{subsec:experimental} To understand the effect of sampling density $\rho$ and the bandwidth parameter $\epsilon$ on the quality of the numerical committor $q^{{\sf TMD}}$ we will examine the root mean squared error (RMSE): 
\begin{align}
    \text{RMSE}\: |q - q^{{\sf TMD}}| = \left(\sum_{i=1}^{n}|q(x_i) - q^{{\sf TMD}}(x_i)|^2\right)^{1/2}. \label{eq: rmse}
\end{align}
To investigate the effect of $\rho$, the underlying point cloud $\mathcal{X}(n)$ for computing $q^{{\sf TMD}}$ via the TMDmap algorithm will be drawn from three different sampling densities via the following sampling algorithms.
\begin{enumerate}
    \item The {\bf Euler-Maruyama} sampling ~(see e.g. \cite{kloeden1992stochastic}) leads to the data being sampled essentially through the invariant {\bf Gibbs density} $\mu \propto \exp{(-\beta V(x))}$. 
    \item {\bf Metadynamics} outlined in Algorithm \ref{alg:metad} results in a more uniform distribution of data on $\mathcal{M}$. 
    \item {\bf Metadynamics with $\delta$-net} first generates data through Algorithm \ref{alg:metad} and then  post-processes them through Algorithm \ref{alg:deltanet}. We will illustrate that such configurations tend to be spatially quasi-uniform on $\mathcal{M}$. 
    \end{enumerate}
For each choice of sampling density, we will draw a fixed number of points and feed the resulting point cloud as input to the TMDmap algorithm. To study the effect of $\epsilon$, we will then vary $\epsilon$ and compute the RMSE as a function of $\epsilon$ for two test systems governed by the overdamped Langevin dynamics \eqref{eq: old} with 
M\"{u}ller's potential and a two-well potential in $\mathbb{R}^2$.
{Since both of these examples are 2D, the function $\theta(\cdot)$ in \eqref{metad} is chosen to be the identity map.}

\subsubsection{M\"{u}ller's potential} We consider a system governed by \eqref{eq: old} 
with $V$ being M\"{u}ller's potential \cite{muller1979location}:
\begin{equation}
\label{mueller}
    V(x_1, x_2) = \sum\limits_{i=1}^4 D_i \exp\left\{a_i(x_1 - X_i)^2 + b_i(x_1 - X_i)(x_2 - Y_i) + c_i(x_2 - Y_i)^2 \right\}
\end{equation}
where
\begin{align*}
    [a_1, a_2, a_3, a_4] &= [-1, -1, -6.5, 0.7], \\
    [b_1, b_2, b_3, b_4] &= [0, 0, 11, 0.6], \\
    [c_1, c_2, c_3, c_4] &= [-10, -10, -6.5, 0.7], \\
    [D_1, D_2, D_3, D_4] &= [-200, -100, -170, 15], \\
    [X_1, X_2, X_3, X_4] &= [1, 0, -0.5, -1], \\
    [Y_1, Y_2, Y_3, Y_4] &= [0, 0.5, 1.5, 1].
\end{align*}

We computed $q^{{\sf TMD}}$ with the three types of input density specified in Section \ref{subsec:experimental}. The timestep for the Euler-Maruyama sampling was $\Delta t = 10^{-4}$, and the trajectory length was $10^6$ timesteps. The trajectory was subsampled to keep every $100$th point resulting in a point cloud of size $10^4$. In the metadynamics approach, a Gaussian bump with { $\sigma = 0.1$} and $w_0 = 0.5$ was deposited at every \(100\)th timestep. The dataset was then post-processed to a $\delta$-net. 
The reactant set $A$ and product set $B$ were chosen to be the balls of radius $0.1$ centered at $[-0.558, 1.441]$ and $[0.623, 0.028]$ respectively.
To validate the results, the numerical solution to the committor problem was computed using the finite element method (FEM). The computational domain for the FEM was $\{x\in\mathbb{R}^2 ~|~V(x) \le 10\}$. The homogeneous Neumann boundary condition was imposed on the outer boundary  where $V =  10$ (Figure \ref{fig: muller's potential}). This FEM solution was used as a surrogate for the true solution in computing the RMSE in \eqref{eq: rmse}. 

\begin{figure}[h]
\centering
    \includegraphics[width=0.65\textwidth]{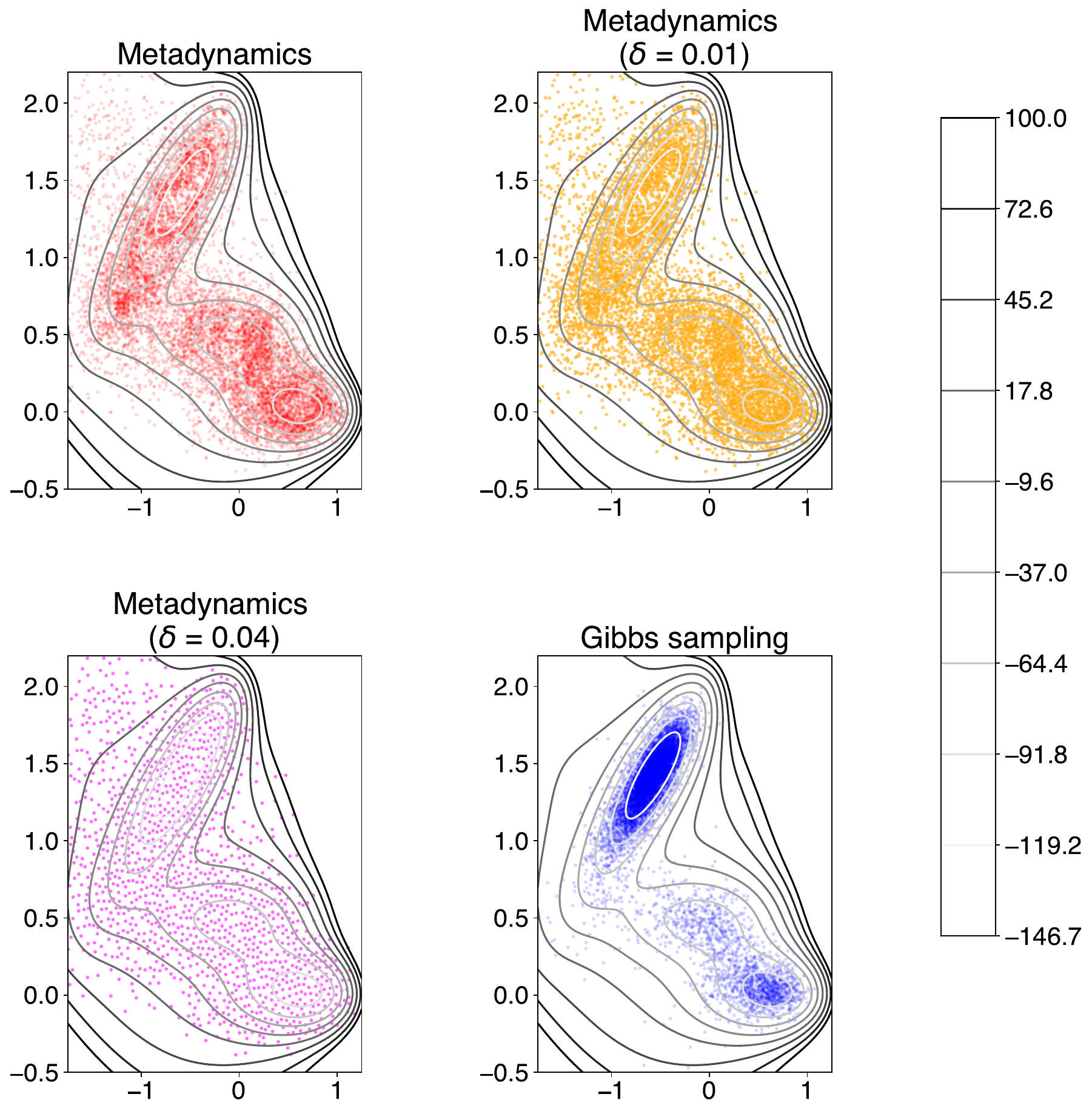}
    \caption{Landscape of M\"{u}ller's potential with different sampling densities}
    \label{fig: muller's potential}
\end{figure}

Since it is of interest to study the effect of uniformization on the RMSE, we modulate $\delta$ in the $\delta$-net algorithm used for post-processing the metadynamics dataset containing $10^4$ points. For each $\delta$, the RMSE over a varying $\epsilon$ is computed and presented in Figure \ref{fig: muller metadynamics error}. The $\delta$-net postprocessing tends to result in a flatter error curve around the minimum hence making the numerical solution more robust with respect to the choice of the bandwidth.

The TMDmap solutions obtained using datasets generated with the three sampling types and as well as the FEM mesh consisting of nearly equilateral triangles of nearly equal sizes are compared in Figure \ref{fig: muller comparison}.
The error curve for the metadynamics plus $\delta$-net dataset has a flat region around its minimum and achieves almost as small a minimum as the one for the FEM mesh point cloud. 

\begin{figure}[h!]
    \centering
    \includegraphics[width=0.6\textwidth]{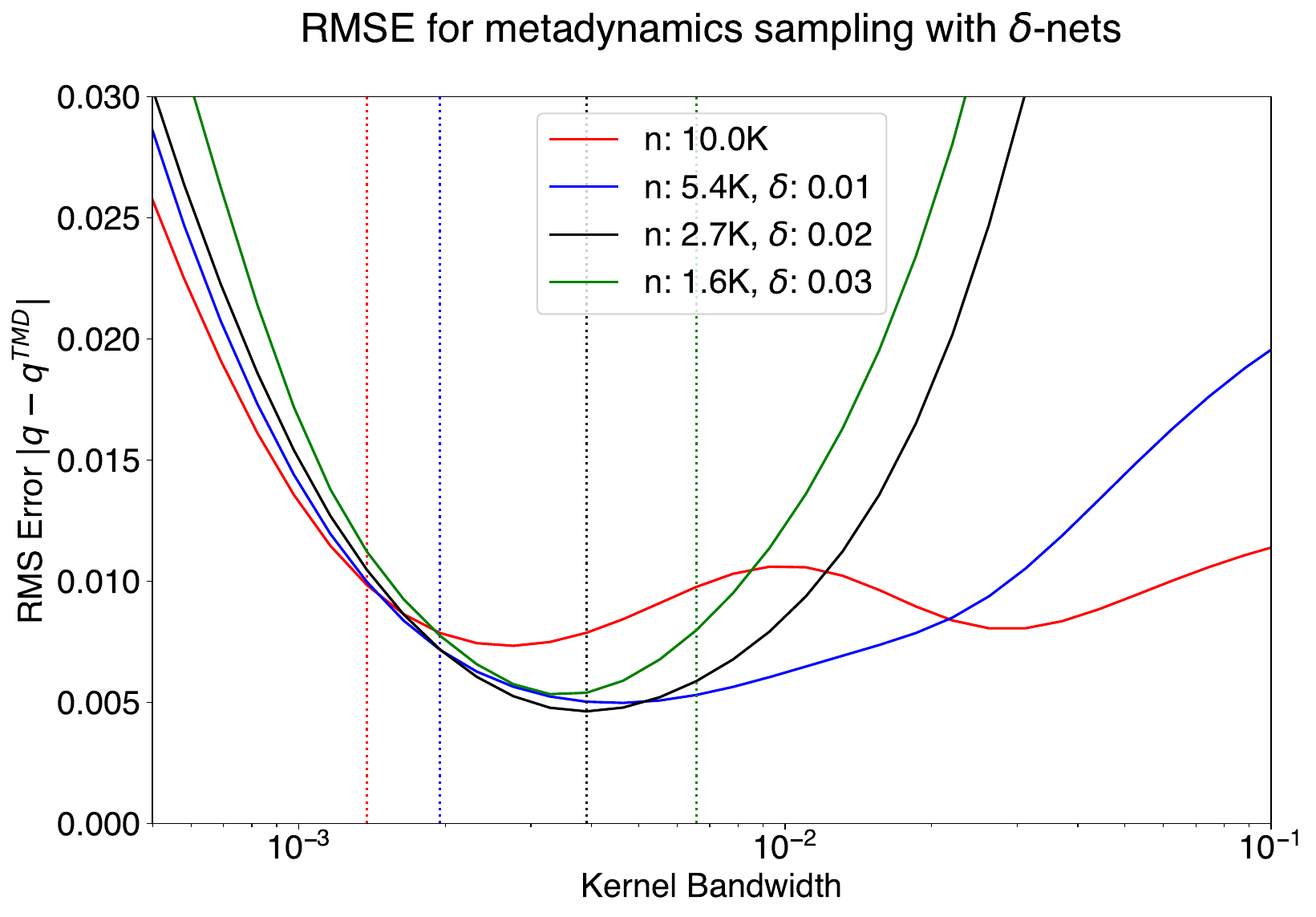}
    \caption{We track the RMSE for $q^{\sf TMD}$ computed for M\"uller's potential as a function of $\epsilon$ for different degrees of spatial uniformity. The dotted line marks the $\epsilon^{*}_{Ksum}$ computed using \eqref{eq: ksum test}.}
    \label{fig: muller metadynamics error}
\end{figure}

\begin{figure}[h!]
    \centering
    \includegraphics[width=0.7\textwidth]{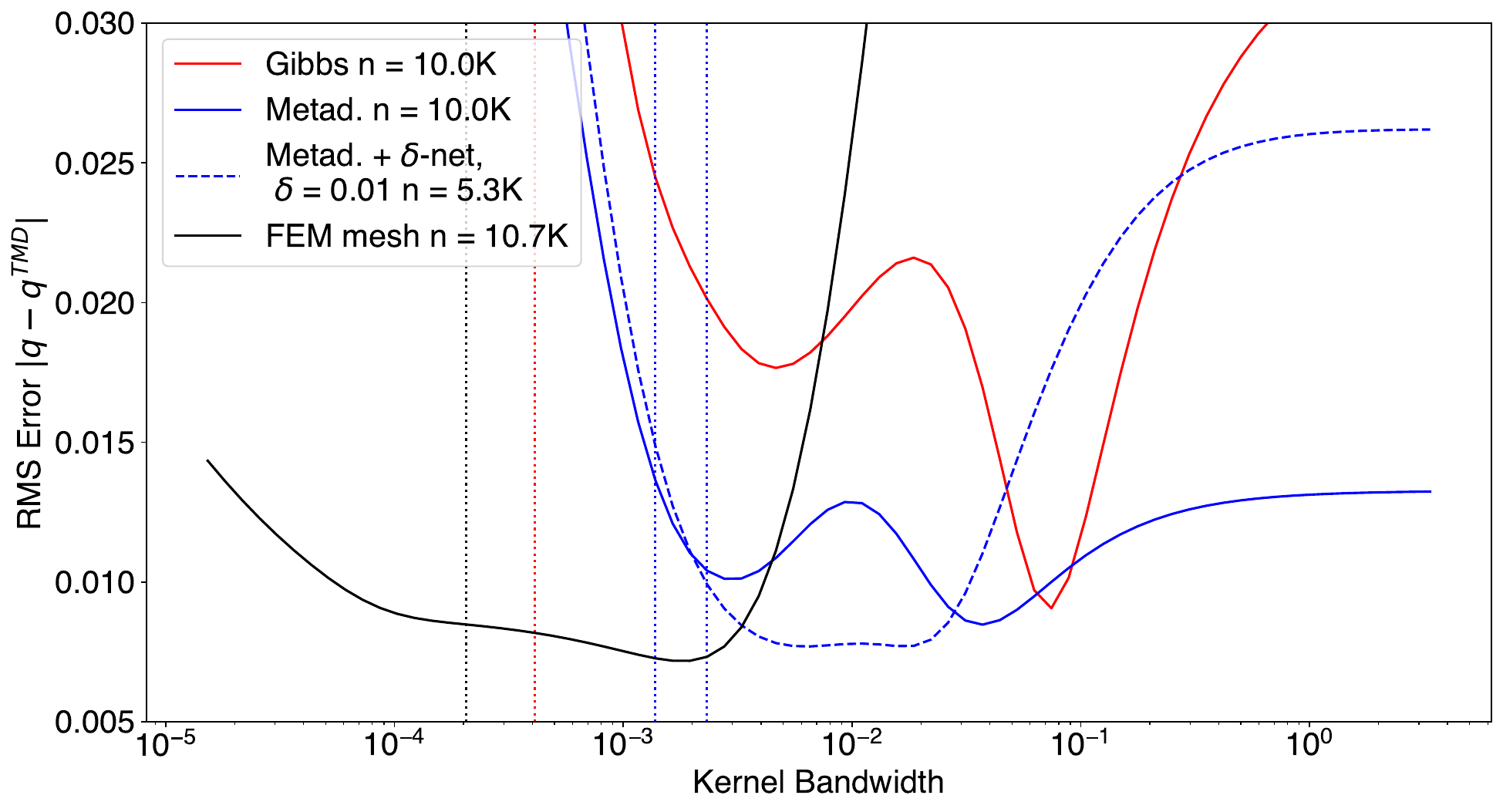}
    \caption{Tracking the RMSE for $q^{\sf TMD}$ in M\"uller's potential for different sampling densities as a function of $\epsilon$. Note that using the mesh from the FEM algorithm gives the least error.}
    \label{fig: muller comparison}
\end{figure}

%%%%%%%%%%%%%%%%%%%%%%%%%%%%%%%%%%%%
%%%%%%%%%%%%%%%%%%%%%%%%%%%%%%%%%%%%

\subsubsection{Two-well potential in 2D} 
A 2D test system governed by \eqref{eq: old} with a two-well potential 
\begin{equation}
\label{eq:two-well}
    V(x_1, x_2) = (x^{2}_1 - 1)^{2}  
\end{equation}
at $\beta = 1.0$ magnifies the advantage of using point clouds generated by metadynamics plus $\delta$-net even stronger. The sets $A$ and $B$ are balls of radius $0.1$ centered at $[-1,0]$ and $[1,0]$ respectively. {The datasets are sampled from the Gibbs density, metadynamics, and metadynamics with $\delta$-net with $\delta = 0.1$, $0.02$, $0.03$, and $0.04$. Four of these datasets are displayed in Figure \ref{fig: two well}.} 

\begin{figure}[h!]
    \centering
    \includegraphics[width=0.8\textwidth]{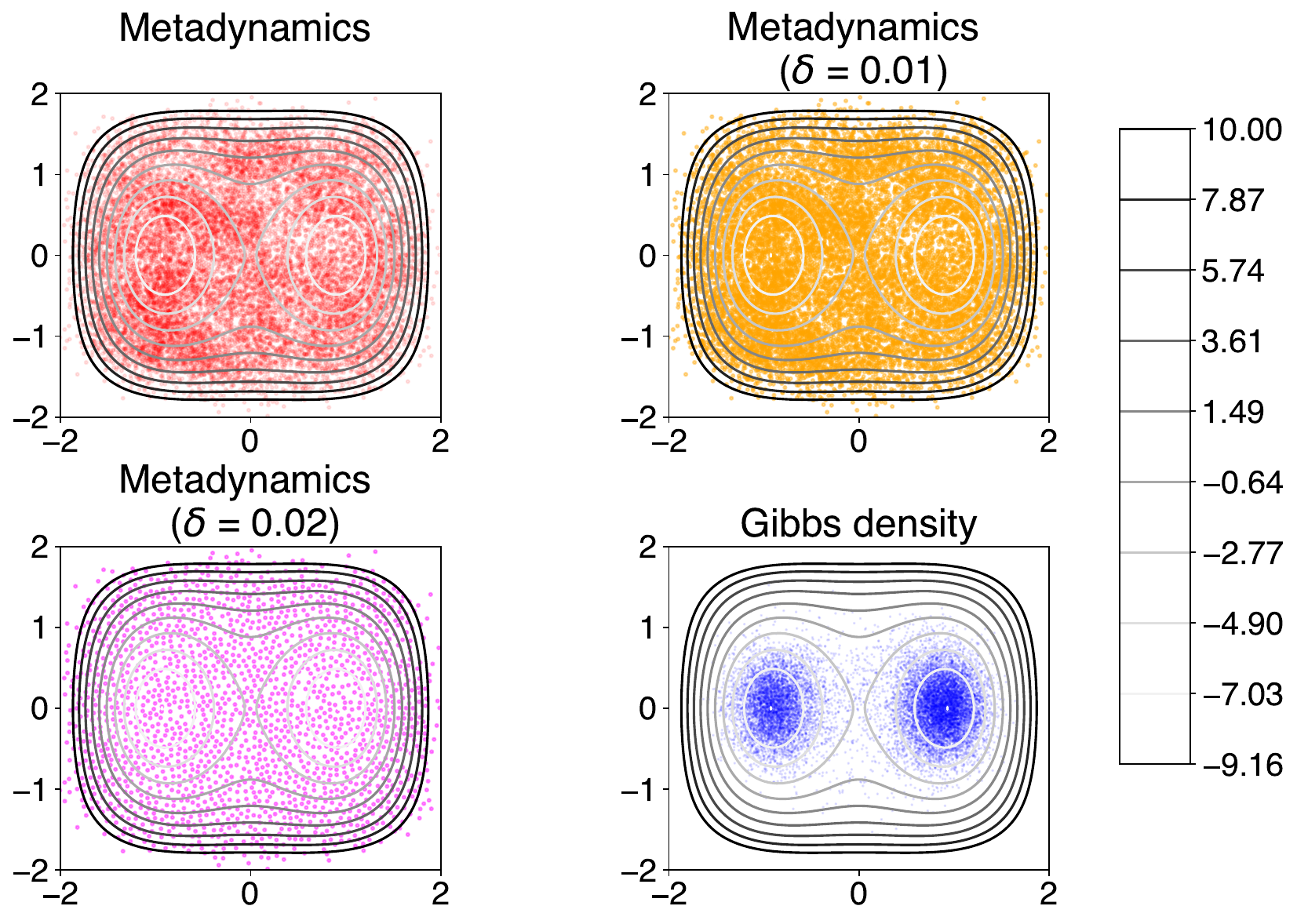}
    \caption{The two-well potential landscape with different sampling densities}
    \label{fig: two well}
\end{figure}

In Figure \ref{fig: twowell metadynamics error}, the effect of $\delta$-net is quantified by tracking the RMSE as a function of $\epsilon$ for each value of $\delta$. We find a similar pattern as for the test problem with M\"{u}ller's potential as in Figure \ref{fig: muller metadynamics error} that moderate levels of uniformization (at $\delta = 0.02$ in the case of $V$ given by \eqref{eq:two-well}) improves and stabilizes the RMSE at the optimal $\epsilon$. 

\begin{figure}[h!]
    \centering \includegraphics[width=0.8\textwidth]{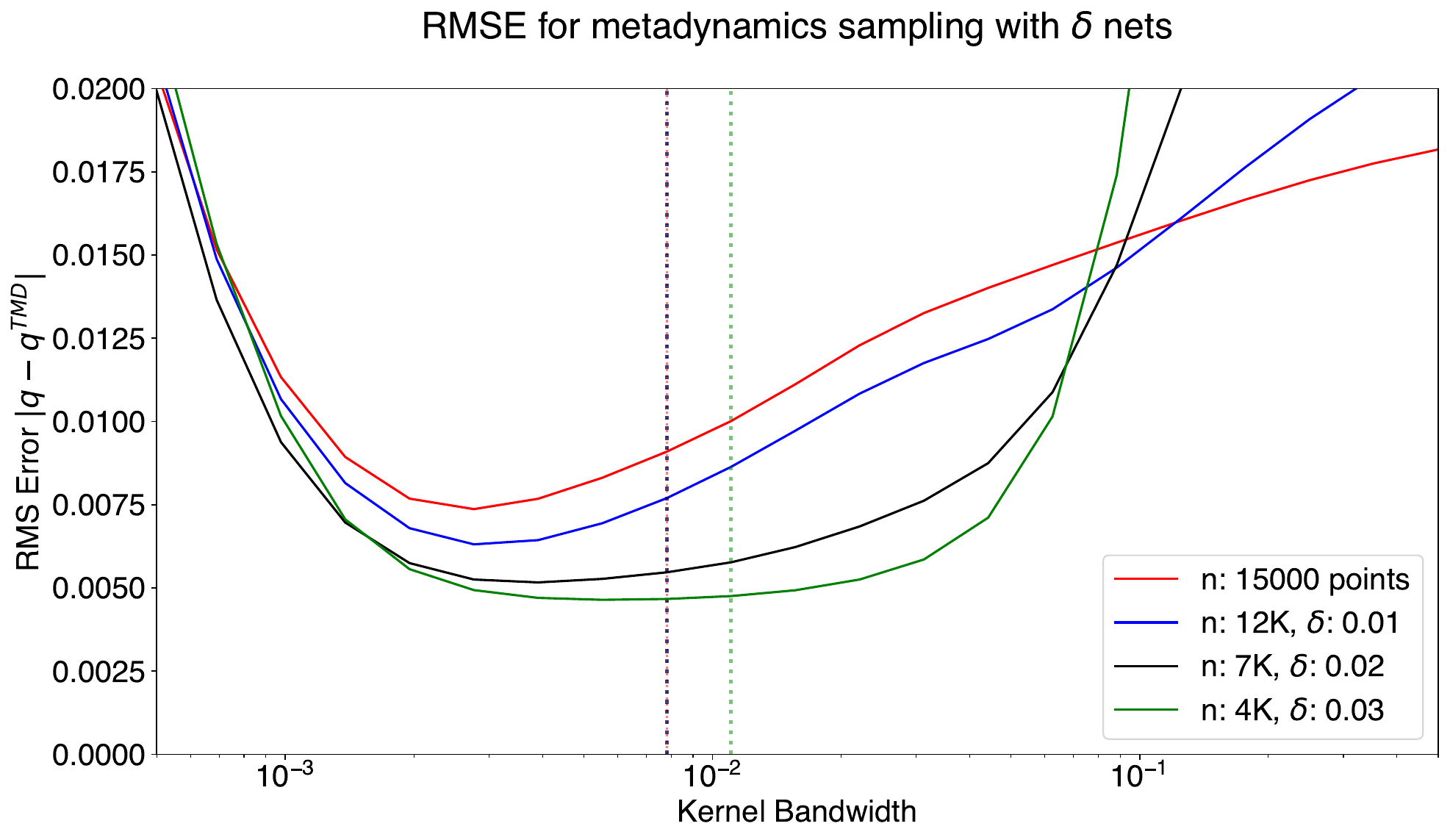}
    \caption{The RMSE for $q^{{\sf TMD}}$ the test system with the two-well potential \eqref{eq:two-well}  obtained using datasets generated via metadynamics and metadynamics plus $\delta$-net with $\delta=0.01$, $0.02$, and $0.03$.     }
    \label{fig: twowell metadynamics error}
\end{figure}

In Figure \ref{fig: twowell comparison}, the three choices of sampling density are compared. Adding uniformization using $\delta$-nets produces the same pronounced effect as for M\"uller's potential in Figure \ref{fig: muller comparison}. Additionally, since this is a 2D example, it is also possible to provide to TMDmap the set of points from the nearly regular FEM mesh used for producing the reference solution. The TMDmap committor $q^{TMD}$ obtained using the FEM mesh dataset has a notably smaller error for a broad range of $\epsilon$ values than it is for the other point clouds used here.
This reinforces the hypothesis that uniform sets are error-optimal for TMDmaps. The caveat, however, is that, in higher dimensions, such uniform meshing is infeasible to generate and thus practitioners must resort to randomly sampled data on $\mathcal{M}$. 

\begin{figure}[h!]
    \centering \includegraphics[width=0.8\textwidth]{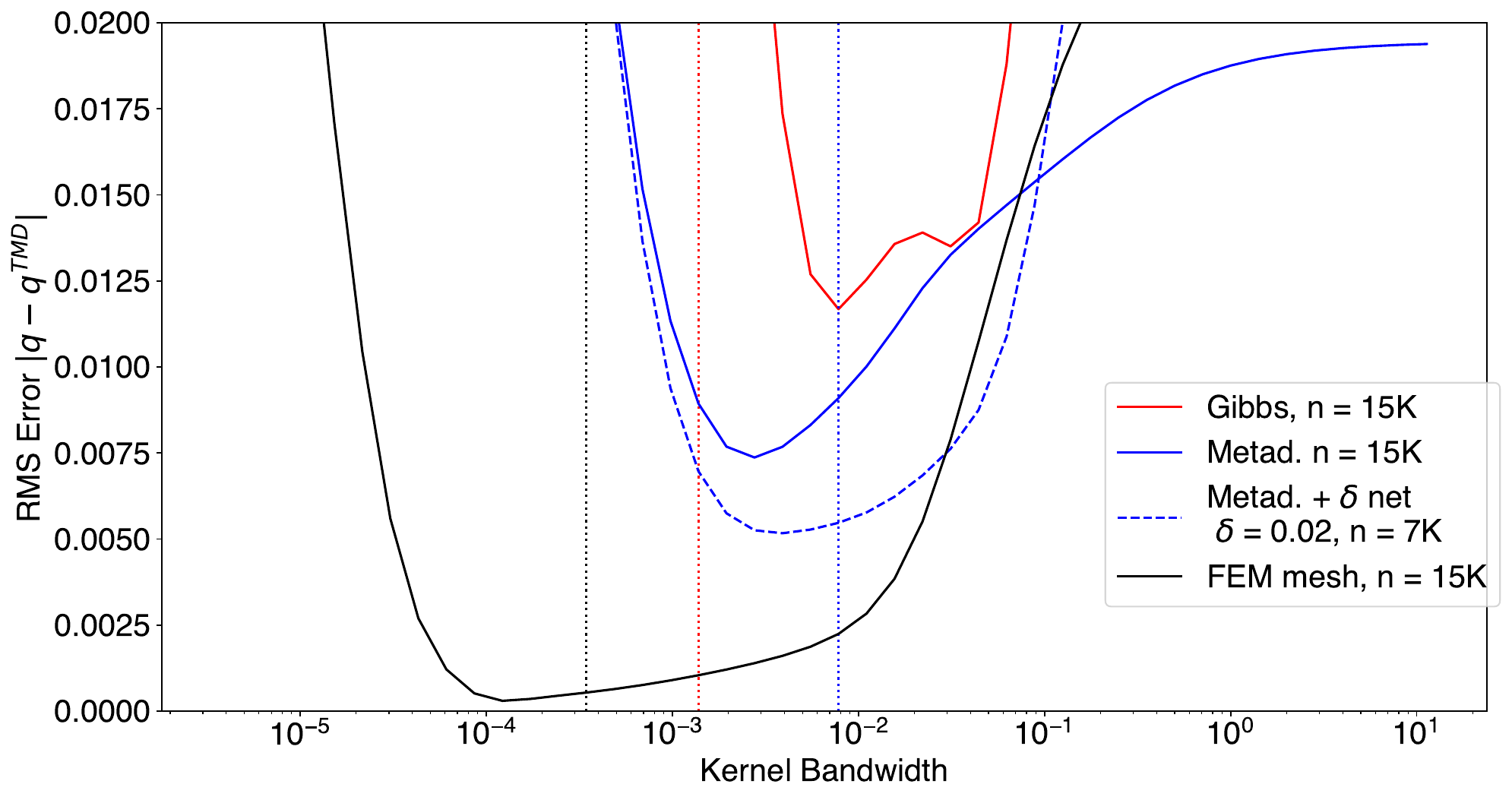}
    \caption{Tracking the RMSE for different sampling densities as a function of $\epsilon$. Note that using the mesh from the FEM algorithm gives the least error.}
    \label{fig: twowell comparison}
\end{figure}

\clearpage
\subsubsection{Summary of numerical results} Our numerical experiments in Figures \ref{fig: muller metadynamics error}, \ref{fig: muller comparison}, \ref{fig: twowell metadynamics error}, and \ref{fig: twowell comparison} confirm that: 

\begin{enumerate}
    \item Spatial uniformization improves not only the accuracy of the RMSE $|q - q^{{\sf TMD}}|$ at $\epsilon^*$ but also the \emph{stability}. Our error estimate suggests that this results from the faster bias error convergence due to quasi-uniform $\rho$. 
    \item 
    Metadynamics with $\delta$-net is a viable strategy to realize the faster bias error convergence rate. 
    \item If the original data $\mathcal{X}$ has good coverage of the manifold $\mathcal{M}$ 
    then \emph{deleting} some points can \emph{improve} the RMSE as long as those deletions improve the spatial uniformity of the data. In other words, reducing the number of data points can enhance the accuracy of the solution!
\end{enumerate}

%%%%%%%% C O N C L U S I O N
\section{Conclusion}
In this paper, we have derived {sharp error estimates revealing} the precise error rates for the target measure diffusion map. Our results extend the consistency theory for graph Laplacians to a manifold learning setting involving density reweighting. The { main advantage} of incorporating the TMDmap density reweighting is the free choice of the sampling density, enabling the use of arbitrary enhanced sampling algorithms for generating the input data. 

We have provided a principled approach for tuning the sampling density $\rho$ and kernel bandwidth parameter $\epsilon$ to improve the accuracy of the TMDmap algorithm because our results are \emph{asymptotically sharp}. The obtained error formula contains a complete quantification of all prefactors involved in the bias and variance errors. This allows us to find strategies for reducing the approximation error when discretizing the backward Kolmogorov operator $\mathcal{L}$ or when solving a Dirichlet boundary-value problem (BVP) with this discretization. These formulas illuminate that (a) solving a homogeneous BVP such as the committor problem and/or (b) sampling $\mathcal{M}$ with a uniform density are the settings in which bias and variance errors are attenuated. Importantly, we have demonstrated that these improvements in accuracy are attainable in practice.

A significant aspect of our work has also been the exploration of uniform subsampling via $\delta$-nets as a robust and simple technique for reducing the error in the TMDmap approximation to the committor. The consistency formula from Theorem \ref{thm:main} shows that \emph{a priori} quasi-uniform densities may yield pointwise speedups in the convergence of both bias and variance. 
Here we obtain a quasi-uniform point cloud with expansive coverage of $\mathcal{M}$ via a simple greedy procedure that subsamples the input dataset into a $\delta$-net. This substantially improves the stability and accuracy of TMDmap. Such distance-based quasi-uniform sampling has previously been used {for model reduction \cite{crosskey2017atlas}} and in molecular dynamics applications \cite{rydzewski2023manifold}. 

Additionally, we have probed the interplay between $\delta$ and $\epsilon$ numerically and illustrated that, for low dimensional cases, the $\delta$--net tends to yield more favorable scaling between the optimal bandwidth and number of points beyond the optimal scaling for general densities in Theorem \ref{thm:main}. Although our justification for the use of uniform densities is grounded in the i.i.d. assumption, we find that the scaling between $n$ and $\epsilon$ error is also improved with $\delta$-nets where the i.i.d. assumption no longer holds. We aim to address this interplay of $n$, $\epsilon$, and $\delta
$ in future work. 

%%%%%%%%
\section{Acknowledgements}
This work was partially supported by AFOSR MURI grant FA9550-20-1-0397.

\section{Code availability}
The code and data for generating the figures in this paper are published on GitHub \cite{tmdmaprepo,distmesh}.

 \bibliographystyle{elsarticle-num} 
 \bibliography{DataML}

%% else use the following coding to input the bibitems directly in the
%% TeX file.

%\begin{thebibliography}{00}

%% \bibitem{label}
%% Text of bibliographic item

%

%\end{thebibliography}
\end{document}